\newcommand{\R}{\mathbb{R}}
\newcommand{\E}{\mathbb{E}}
\newcommand{\F}{\mathcal{F}}
\newcommand{\Prob}{\mathbb{P}}
\newcommand{\1}{\mathbbm{1}}
\newcommand{\bb}[1]{\mathbb{#1}}
\newcommand{\br}[1]{\lbrace #1 \rbrace}
\newcommand{\toinf}{\rightarrow\infty}
\newcommand{\tto}{\rightarrow}
\newcommand{\W}{\Omega}
\newcommand{\mc}[1]{\mathcal{ #1}}
\def\arxiveprint{
    \resolve@inner{\bib@arxiveprint}
  }
  \def\bib@arxiveprint#1{
    \begingroup
        #1\relax
        \bib@resolve@xrefs
        \bib@field@patches
        \bib'setup
        \let\PrintPrimary\@empty
        {
          \IfEmptyBibField{arxivid}{\url{https://arxiv.org/}}
          {
            \href{https://arxiv.org/abs/\bib'arxivid}{\nolinkurl{arXiv:\bib'arxivid}}
            \IfEmptyBibField{arxivclass}{}{~\nolinkurl{[\bib'arxivclass]}}          }
        }\bib'transition
        \setbib@@
    \endgroup
  }
\newtheorem{thm}{Theorem}[section]
\newtheorem{theo}[thm]{Theorem}
\newtheorem{corollary}[thm]{Corollary}
\newtheorem{proposition}[thm]{Proposition}
\newtheorem{defi}[thm]{Definition}
\theoremstyle{remark}
\newtheorem{remark}[thm]{Remark}
\newtheorem{example}[thm]{Example}
\newtheorem{lemma}[thm]{Lemma}
\author{Z.W. Bezemek, and K. Spiliopoulos}
\address{Boston University, Department of Mathematics and Statistics\\ 111 Cummington Mall, Boston, MA 02215, USA}
\email[Zachary William Bezemek]{bezemek@bu.edu}
\email[Konstantinos Spiliopoulos]{kspiliop@bu.edu}
\thanks{This work has been partially supported by the National Science Foundation (DMS 2107856) and Simons Foundation Award  672441}
\title{Rate of homogenization for fully-coupled McKean-Vlasov SDEs}
\date{\today}
\begin{document}

\begin{abstract}
We consider a  fully-coupled slow-fast system of McKean-Vlasov SDEs with full dependence on the slow and fast component and on the law of the slow component and derive convergence rates to its homogenized limit.  We do not make periodicity assumptions, but we impose conditions on the fast motion to guarantee ergodicity. In the course of the proof we obtain related ergodic theorems and we gain results on the regularity of Poisson type of equations and of the associated Cauchy-Problem on the Wasserstein space that are of independent interest.
\end{abstract}
\subjclass[2010]{60F05, 60F17, 60G17, 60J60}
\keywords{multiscale processes, empirical measure, McKean-Vlasov process, ergodic theorems, averaging, homogenization}
\maketitle
\section{Introduction}

The goal of this paper is to study the behavior as $\epsilon\rightarrow 0$ of the system of slow-fast McKean-Vlasov SDEs
\begin{align}\label{eq:slow-fastMcKeanVlasov}
X^{\epsilon,\nu}_t &= \eta+\int_0^t\biggl[\frac{1}{\epsilon}b(X^{\epsilon,\nu}_s,Y^{\epsilon,\nu}_s,\mc{L}(X^{\epsilon,\nu}_s))+ c(X^{\epsilon,\nu}_s,Y^{\epsilon,\nu}_s,\mc{L}(X^{\epsilon,\nu}_s)) \biggr]ds + \int_0^t\sigma(X^{\epsilon,\nu}_s,Y^{\epsilon,\nu}_s,\mc{L}(X^{\epsilon,\nu}_s))dW_s\\
Y^{\epsilon,\nu}_t & = \zeta+\int_0^t\frac{1}{\epsilon}\biggl[\frac{1}{\epsilon}f(X^{\epsilon,\nu}_s,Y^{\epsilon,\nu}_s,\mc{L}(X^{\epsilon,\nu}_s))+ g(X^{\epsilon,\nu}_s,Y^{\epsilon,\nu}_s,\mc{L}(X^{\epsilon,\nu}_s)) \biggr]dt \nonumber\\
&+ \frac{1}{\epsilon}\int_0^t\biggl[\tau_1(X^{\epsilon,\nu}_s,Y^{\epsilon,\nu}_s,\mc{L}(X^{\epsilon,\nu}_s))dW_s+\int_0^t\tau_2(X^{\epsilon,\nu}_s,Y^{\epsilon,\nu}_s,\mc{L}(X^{\epsilon,\nu}_s))dB_s\biggr].\nonumber
\end{align}

Equation (\ref{eq:slow-fastMcKeanVlasov}) is defined on a filtered probability space $(\W,\F,\Prob,\br{\F_t})$ with $\br{\F_t}$ satisfying the usual conditions, where $b,c,f,g,:\R^d\times\R^d\times\mc{P}_2(\R^d)\tto \R^d$ $\sigma,\tau_1,\tau_2:\R^d\times\R^d\times\mc{P}_2(\R^d)\tto \R^{d\times m}$, $B_t,W_t$ are independent standard $m$-dimensional $\br{\F_t}$-Brownian motions, $\eta\in L^2(\W,\F_0,\Prob;\R^d)$ with $\eta\sim \nu$, $\zeta\in L^p(\W,\F_0,\Prob;\R^d)$ for all $p>0$, and $(\eta,\zeta)$ is independent of $(W,B)$. Here and throughout $\mc{P}_2(\R^d)$ denotes the space of probability measures on $\R^d$ with finite second moment, equipped with the 2-Wasserstein metric (see Appendix \ref{Appendix:LionsDifferentiation}).

 Note the superscript $\nu\in\mc{P}_2(\R^d)$ on $(X^{\epsilon,\nu},Y^{\epsilon,\nu})$, which is parameterizing the distribution of $X^{\epsilon,\nu}_0=\eta$. This parameterization is important for the formulation of solutions to the Cauchy-Problem on Wasserstein Space \eqref{eq:W2CauchyProblem}, which is employed to prove our main result, Theorem \ref{theo:mckeanvlasovaveraging}. See Section \ref{sec:onthecauchyproblem} and in particular Remark \ref{remark:McKeanVlasovFlowProperty} for further discussion of this choice of parameterization and its importance.

The theory of averaging for diffusion process with coefficients that do not depend on the law of the solution itself, i.e., that do not depend on $\mc{L}(X^{\epsilon,\nu})$, is a classical result by now and had been studied under different assumptions and settings, see for example \cites{Bensoussan,BorkarGaitsgory,PV1,PV2,PS,RocknerFullyCoupled,Spiliopoulos2014Fluctuations} to name a few. Existing averaging results for slow-fast McKean-Vlasov SDEs can be found  in \cites{RocknerMcKeanVlasov,RocknerHolderContinuous,HLL,KSS,XLLM}. In particular,  only systems where $L^{2}$ convergence rates can be found, possible for instance when $b=0$ and $\sigma(x,y,\mu)=\sigma(x,\mu)$, have been considered in the McKean-Vlasov setting, leaving the fully-coupled cases unsolved.  Even for standard diffusions (where the coefficients do not depend on the law of the solution), the only existing result for rates of convergence in distribution can be found in Theorem 2.3 of \cite{RocknerFullyCoupled}. We close this literature review mentioning the very recent preprints \cites{QW,LWX,HLS}, dealing with various aspects of averaging for McKean-Vlasov SDE systems, that appeared on arXiv after this paper had appeared on arXiv and was submitted to the journal.

Let $a(x,y,\mu)=\frac{1}{2}[\tau_1\tau_1^\top+\tau_2\tau_2^\top](x,y,\mu)$. For $x\in \R^d,\mu\in \mc{P}_2(\R^d)$, we define the differential operator $\mc{L}_{x,\mu}$ acting on $\phi \in C_b^2(\R^d)$ by

\begin{align}\label{eq:frozengeneratormold}
\mc{L}_{x,\mu}\phi(y) = f(x,y,\mu)\cdot\nabla \phi(y)+a(x,y,\mu):\nabla^2\phi(y).
\end{align}

This is the generator obtained from considering the $O(1/\epsilon^2)$ terms from the generator of $(X^{\epsilon,\nu}_t,Y^{\epsilon,\nu}_t)$ and ``freezing'' the terms associated to the slow process at fixed $x\in\R^d$ and $\mu\in\mc{P}_2(\R^d)$.

Under assumptions \ref{assumption:uniformellipticity} and \ref{assumption:retractiontomean} below, by \cite{PV1} Proposition 1 (see also \cite{Veretennikov1987}), there exists a $\pi(\cdot;x,\mu)$ which is the unique probability measure solving the distributional equation
\begin{align}\label{eq:invariantmeasureold}
\mc{L}_{x,\mu}^*\pi=0.
\end{align}
Moreover, all moments of $\pi$ are bounded uniformly in $x$ and $\mu$.

For $\pi$ as in Equation \eqref{eq:invariantmeasureold}, under the standard centering condition
\begin{align}\label{eq:centeringconditionold}
\int_{\R^d} b(x,y,\mu)\pi(dy;x,\mu)=0,\forall x\in\R^d,\mu\in\mc{P}_2(\R^d),
\end{align}
and other technical assumption to be stated later on (see Assumptions \ref{assumption:uniformellipticity}-\ref{assumption:centeringcondition}),
by Lemma \ref{lemma:Ganguly1DCellProblemResult} we may also consider $\Phi$ the unique classical solution to:
\begin{align}\label{eq:cellproblemold}
\mc{L}_{x,\mu}\Phi_k(x,y,\mu) &= -b_k(x,y,\mu),k\in \br{1,...,d} \\
\int_{\R^d}\Phi(x,y,\mu)\pi(dy;x,\mu)&=0. \nonumber
\end{align}

Define:
\begin{align}\label{eq:limitingcoefficients}
\gamma(x,y,\mu)& \coloneqq \gamma_1(x,y,\mu)+c(x,y,\mu)\\
\gamma_1(x,y,\mu)&\coloneqq \partial_x\Phi(x,y,\mu)b(x,y,\mu)+\partial_y\Phi(x,y,\mu)g(x,y,\mu)+\tau_1(x,y,\mu)\sigma^\top(x,y,\mu):\partial_x\partial_y\Phi(x,y,\mu) \nonumber \\
D(x,y,\mu) & \coloneqq D_1(x,y,\mu)+D^\top_1(x,y,\mu)+\frac{1}{2}\sigma(x,y,\mu)\sigma^\top(x,y,\mu) \nonumber\\
D_1(x,y,\mu)& \coloneqq \frac{1}{2}[b(x,y,\mu)\otimes \Phi(x,y,\mu)+\partial_y\Phi(x,y,\mu)\tau_1(x,y,\mu)\sigma^\top(x,y,\mu)] \nonumber
\end{align}
and
\begin{align}\label{eq:averagedlimitingcoefficients}
\bar{\gamma}(x,\mu) &\coloneqq \biggl[\int_{\R^d} \gamma(x,y,\mu) \pi(dy;x,\mu)\biggr]\\
\bar{D}(x,\mu) &\coloneqq\biggl[\int_{\R^d} D(x,y,\mu) \pi(dy;x,\mu)\biggr]\nonumber.
\end{align}
Here
\begin{align*}
\tau_1\sigma^\top :\partial_x\partial_y\Phi(x,y,\mu) \coloneqq [\tau_1\sigma^\top :\partial_x\partial_y\Phi_1(x,y,\mu),...,\tau_1\sigma^\top :\partial_x\partial_y\Phi_d(x,y,\mu)]^\top.
\end{align*}

In this paper, we will establish a rate of convergence of $\mc{L}(X^{\epsilon,\nu}_t)$ to $\mc{L}(X^\nu_t)$ in terms of sufficiently smooth test functions on the space $\mc{P}_2(\R^d)$, where $X^\nu_t$ satisfies the averaged McKean-Vlasov SDE:
\begin{align}\label{eq:averagedMcKeanVlasov}
X^\nu_t &= \eta^2+\int_0^t \bar{\gamma}(X^{\nu}_s,\mc{L}(X^{\nu}_s))ds+\int_0^t\sqrt{2}\bar{D}^{1/2}(X^{\nu}_s,\mc{L}(X^{\nu}_s))dW^2_s.
\end{align}

Here the equation is posed on a possibly different filtered probability space satisfying the usual conditions and supporting a $d$-dimensional Brownian motion $W^2$, $\eta^2$ is random variable on this new probability space independent from $W^2$ and equal in distribution to $X^{\epsilon,\nu}_0=\eta$ - that is $\eta^2\sim \nu \in\mc{P}_2(\R^d)$ - and once again the superscript $\nu$ in the notation $X^\nu$ is parameterizing the distribution of the initial condition. In addition, $\bar{D}^{1/2}(x,\mu)$ is the unique positive semi-definite matrix such that $\bar{D}^{1/2}(x,\mu)\bar{D}^{1/2}(x,\mu)=\bar{D}(x,\mu)$. Note that while $\bar{D}$ is symmetric, it is not clear a priori that it is positive semi-definite. Thus we make the following remark:

\begin{remark}\label{remark:alternateformofdiffusion}
One can find that the diffusion coefficient $\bar{D}$ can be written in the alternative form
\begin{align}\label{eq:alternativediffusion}
\bar{D}(x,\mu) & = \frac{1}{2}\int_{\R^d} \left(\partial_y\Phi(x,y,\mu)\tau_2(x,y,\mu)\tau^\top_2(x,y,\mu)[\partial_y\Phi(x,y,\mu)]^\top\right. \\
&\left.+ [\sigma(x,y,\mu)+\partial_y\Phi(x,y,\mu)\tau_1(x,y,\mu)][\sigma(x,y,\mu)+\partial_y\Phi(x,y,\mu)\tau_1(x,y,\mu)]^\top\right)\pi(dy;x,\mu),\nonumber
\end{align}
and hence is both symmetric and positive semi-definite. This reduces to showing
\begin{align*}
&\int_{\R^d}\left(b(x,y,\mu)\otimes \Phi(x,y,\mu)+\Phi(x,y,\mu)\otimes b(x,y,\mu)\right)\pi(dy;x,\mu)\\
&=2\int_{\R^d}\partial_y\Phi(x,y,\mu)a(x,y,\mu)[\partial_y\Phi(x,y,\mu)]^\top\pi(dy;x,\mu),
\end{align*}
which can be seen via an integration by parts argument as per \cite{PS} Remarks 11.4/11.5.
\end{remark}

Further than simply establishing this new averaging principle, we also establish a rate of convergence in distribution. Our main result is Theorem \ref{theo:mckeanvlasovaveraging}, while Corollary \ref{corollary:lineartestfunction} specializes the main result in the important case of convergence for linear test functionals of the law. We make our results concrete in Section \ref{SS:Example} for a class of Aggregation-Diffusion equations where we also note that the effect of the multiple scales is to decrease the magnitude of the effective interaction potential in all directions.  When using classical methods, such as the martingale problem, to show convergence in distribution of stochastic process, rates of convergence can only possibly be found after serious added effort. An added difficulty of the general setting studied in this paper is that $\Phi$ solving (\ref{eq:cellproblemold}) depends on the measure parameter $\mu$, and thus terms involving derivatives of $\Phi$ with respect to $\mu$ appear in the prelimit expression for functions of the slow process and its law; see Proposition \ref{prop:purpleterm2} (and analogously Proposition \ref{prop:newpurpleterm}). These terms are handled using a novel coupling argument and an extended Poisson equation (\ref{eq:doublecorrectorproblem}) (a doubled corrector problem).

To our knowledge, the only result providing rates of convergence in distribution in the fully-coupled setting for standard diffusion processes (which do not depend on their law, i.e. functional derivatives with respect to $\mu$ and terms that we have to deal with in Propositions \ref{prop:purpleterm2} and \ref{prop:newpurpleterm} do not appear there), are found in \cite{RocknerFullyCoupled} as Theorem 2.3. The insight provided by the proof of that Theorem is to write the difference $\E\biggl[\phi(X^\epsilon_t)- \phi(X_t) \biggr] $ in terms of the solution to the Cauchy problem (backward Kolmogorov equation) associated to the limiting system (here $X^\epsilon$ and $X$ are solutions of a standard SDEs, not McKean-Vlasov SDEs).

Though not stated explicitly in \cite{RocknerFullyCoupled}, the constant such that $\sup_{t\in[0,T]}\E\biggl[\phi(X^\epsilon_t)- \phi(X_t) \biggr] \leq C\epsilon$ (considering here Regime 4 and $\vartheta=1$ in Theorem 2.3) can be seen to depend linearly on the norm of the test function $\phi$ in an appropriate function space. Viewing $\mc{L}(X^\epsilon_t)$ and $\mc{L}(X_t)$ as elements of the dual of this space, such estimates on the ``operator norm'' of these probability measures are key in establishing tightness for fluctuation processes which establish a functional CLT related to the propagation of chaos for standard McKean-Vlasov SDEs \cites{HM,LossFromDefault,DLR,KX,FM}. This is, in fact, a key source of inspiration for this work, as it provides a needed rate of convergence in distribution for an intermediary Slow-Fast McKean-Vlasov process to its averaged limit in the proof of tightness of the fluctuations process for which a large deviations principle for is established in \cite{BS_MDP2022}.

As we will see, it is greatly beneficial in the McKean-Vlasov setting to consider, in lieu of the standard backward Kolmogorov equation associated to the averaged dynamics \eqref{eq:averagedMcKeanVlasov}, the associated Cauchy-Problem on Wasserstein space:
\begin{align}\label{eq:W2CauchyProblem}
\dot{U}(t,\mu)& = \int_{\R^d}\bar{\gamma}(z,\mu)\cdot\partial_\mu U(t,\mu)[z] + \bar{D}(z,\mu):\partial_z\partial_\mu U(t,\mu)[z]\mu(dz),t\in (0,\tau],\mu\in\mc{P}_2\\
U(0,\mu)& = G(\mu).\nonumber
\end{align}

The derivatives in the measure argument in the above equation are in the sense of Lions \cite{NotesMFG}. For the reader's convenience, we have included in Appendix \ref{Appendix:LionsDifferentiation} a brief review on differentiation of functions on spaces of measures. For a more comprehensive exposition on this, we refer the interested reader to \cite{CD} Chapter 5.

Such equations were originally studied in \cite{BLPR}. They have been used to study propagation of chaos rates in \cite{DF2}, and a related but different PDE on Wasserstein space originally posed in \cite{CCD} is used in the study Mean Field Games. We will use the recent result of \cite{CST} Theorem 2.15, which extends regularity of solutions to \eqref{eq:W2CauchyProblem} beyond the two derivatives usually needed for these applications (stated here as Lemma \ref{lem:cauchyproblemregularity}).

To our knowledge, this is the first application of the Cauchy problem on Wasserstein space used to establish rates of convergence of the law of one McKean-Vlasov SDE to another. One benefit of our proof method via the use of the Cauchy Problem on Wasserstein space is that it allows for non-linear test functions on the space of measures, so that a Corollary of our proof method is somewhat of an extension of the current results on rates of averaging for Fully-Coupled standard diffusions - see Remark \ref{remark:nomeasuredependence}. For more discussion of the reasoning behind and benefits of using solutions of Equation \eqref{eq:W2CauchyProblem} over solutions to the standard backward Kolmogorov equation, see Section \ref{sec:onthecauchyproblem}.

The rest of the paper is organized as follows. In Section \ref{subsec:notationandtopology} we go over notation and the assumptions that hold throughout the paper. Section \ref{subsection:statementofmainresults} contains our main results together with illustrative examples. In Section \ref{section:mckeanvlasovergodictheorems} we present ergodic theorems relevant to the behavior of the system (\ref{eq:slow-fastMcKeanVlasov}) as $\epsilon\rightarrow 0$. Section \ref{sec:onthecauchyproblem} discusses in detail the Cauchy Problem \eqref{eq:W2CauchyProblem}. The proof of Theorem \ref{theo:mckeanvlasovaveraging} is in Section \ref{section:proofofmainresults}. Conclusions and a discussion on future work is the content of Section \ref{S:ConclusionsFutureWork}. The Appendix contains a number of technical results used in the paper. In particular, Appendix \ref{sec:regularityofthecellproblem} contains regularity results on the Poisson equations studied in this paper. In Appendix \ref{Appendix:LionsDifferentiation}, we recall some notation and terminology associated to differentiation of functions on spaces of measures.

\section{Notation and Assumptions}\label{subsec:notationandtopology}
Let $\bm{X}$ and $\bm{Y}$ be a Polish spaces, and $(\tilde\W,\tilde\F,\mu)$ be a measure space. We will denote by $\mc{P}(\bm{X})$ the space of probability measures on $\bm{X}$ with the topology of weak convergence, $\mc{P}_2(\bm{X})\subset \mc{P}(\bm{X})$ the space of square integrable probability measures on $\bm{X}$ with the 2-Wasserstein metric (see Definition \ref{def:lionderivative}), $\mc{B}(\bm{X})$ the Borel $\sigma$-field of $\bm{X}$, $C(\bm{X};\bm{Y})$ the space of continuous functions from $\bm{X}$ to $\bm{Y}$, $C_b(\bm{X})$ the space of bounded, continuous functions from $\bm{X}$ to $\R$ with norm $\norm{\psi}_\infty\coloneqq \sup_{x\in\bm{X}}|\psi(x)|$, and $L^p(\tilde\W,\tilde\F,\mu;\R^d)$ the space of $p$-integrable functions on $(\tilde\W,\tilde\F,\mu)$ with values in $\R^d$ (where if $\tilde\W =\bm{X}$ and no $\sigma$-algebra is provided we assume it is $\mc{B}(\bm{X})$). $C^k_b(\R^d)$ for $k\in\bb{N}$ will note the space of functions with $k$ continuous and bounded derivatives on $\R^d$, with norm $\norm{\psi}_{C^k_b(\R^d)} = \sum_{j=0}^k\norm{\nabla^j\psi}_\infty$, and $C^{1,k}_b([0,T]\times\R^d)$ will denote continuous functions $\psi$ on $[0,T]\times\R^d$ with a continuous, bounded time derivative on $(0,T)$, denoted $\dot{\psi}$, such that $\norm{\psi}_{C^{1,k}_b([0,T]\times\R^d)}\coloneqq \sup_{t\in[0,T],x\in\R^d} |\dot{\psi}(t,x)|+\sup_{t\in[0,T]}\norm{\psi(t,\cdot)}_{C^k_b(\R^d)}<\infty$. $C^k_{b,L}(\R^d)\subset C^k_b(\R^d)$ is the space of functions in $C^k_b(\R^d)$ such that all $k$ derivatives are Lipschitz continuous. For $\phi \in L^1(\bm{X},\mu),\mu\in\mc{P}(\R^d)$ we define $\langle \mu,\phi\rangle \coloneqq \int_{\bm{X}}\phi(x)\mu(dx)$. For $a,b\in \R$, we will denote $a\wedge b = \min\br{a,b}$. $C$ will be used for a constant which may change from line to line throughout, and when there are parameters $a_1,...,a_n$ which $C$ depends on in an important manner, will will denote this dependence by $C(a_1,...,a_n)$. For all function spaces, the codomain is assumed to be $\R$ unless otherwise denoted.

In the proof of the Theorem \ref{theo:mckeanvlasovaveraging}, we will be making use of regularity of solutions to a Cauchy Problem on Wasserstein space (see Equation \eqref{eq:W2CauchyProblem}), for which we will need to establish existence, uniqueness, and regularity of some number of derivatives. In doing so, will be controlling many mixed derivatives of functions in the Lions sense and in the standard sense, it will be useful for us to borrow the multi-index notation proposed in \cite{CM} and employed in \cite{CST}. We will also need to ensure that the derivatives of $\Phi$ which appear in the definition the limiting coefficients in Equation \eqref{eq:limitingcoefficients} are well-defined and integrable against $\pi$. We thus extend the multi-index notation from the aforementioned papers to track specific collections of mixed partial derivatives, and to capture needed assumptions of local H\"older continuity and polynomial growth in $y$.

\begin{defi}\label{def:multiindexnotation}
Let $n,l,k$ be non-negative integers and $\bm{\beta}=(\beta_1,...,\beta_n)$ be an $n-$dimensional vector of non-negative integers. We call any ordered tuple of the form $(n,l,\mc{\beta})$ a \textbf{multi-index}. For a function $G:\R^j\times\mc{P}_2(\R^d)\tto \R^k$, we will denote for a multi-index $(n,l,\mc{\beta})$
\begin{align*}
D^{(n,l,\bm{\beta})}G(x,\mu)[z_1,...,z_n] = \partial_{z_1}^{\beta_1}...\partial_{z_n}^{\beta_n}\partial_x^l \partial_\mu^n G(x,\mu)[z_1,...,z_n]
\end{align*}
if this derivative is well defined. As noted in the Remark \ref{remark:thirdLionsDerivative}, for such a derivative to be well defined we require for it to be jointly continuous in $x,\mu,z_1,...,z_n$ where the topology used in the measure component is that of $\mc{P}_2(\R^d)$.
\end{defi}
\begin{defi}\label{def:completemultiindex}
For $\bm{\zeta}$ a collection of multi-indices of the form $(n,l,\bm{\beta})\in\bb{N}\times\bb{N}\times\bb{N}^n$, we will call $\bm{\zeta}$ a \textbf{complete} collection of multi-indices if for any $(n,l,\bm{\beta})\in \bm{\zeta}$, $\br{(k,j,\bm{\alpha}(k)):\bm{\alpha}(k)=(\alpha_1,...,\alpha_k),\alpha_p \leq \beta(k)_p, \forall \bm{\beta}(k)=(\beta(k)_1,...,\beta(k)_k)\in \binom{\bm{\beta}}{k},k\leq n,j\leq l}\subset \bm{\zeta}$. Here for a vector of positive integers $\beta=(\beta_1,...,\beta_n)$ and $k\in\bb{N},k\leq n$, we are using the notation $\binom{\bm{\beta}}{k}$ to represent the set of size $\binom{n}{k}$ containing all the $k$-dimensional vectors of positive integers which can be obtained from removing $n-k$ entries from $\bm{\beta}$.
\end{defi}
\begin{remark}\label{remark:oncompletecollectiondefinition}
Definition \ref{def:completemultiindex} is essentially enforcing that if collection of multi-indices contains a multi-index representing some mixed derivative in $(x,\mu,z)$ as per Definition \ref{def:multiindexnotation}, then it also contains all lower-order mixed derivatives of the same type. For instance, let $\bm{\zeta}$ be the collection of multi-indices containing $(2,0,(1,1))$ (corresponding to $\partial_{z_1}\partial_{z_2}\partial^2_\mu G(x,\mu)[z_1,z_2]$).  Then, in order to be complete, $\bm{\zeta}$ must also contain $(2,0,(1,0)),(2,0,(0,1)),(2,0,0),(1,0,1),(1,0,0),$ and $(0,0,0)$ (corresponding to $\partial_{z_1}\partial^2_\mu G(x,\mu)[z_1,z_2]$, $\partial_{z_2}\partial^2_\mu G(x,\mu)[z_1,z_2]$, $\partial^2_\mu G(x,\mu)[z_1,z_2]$,$\partial_{z}\partial_\mu G(x,\mu)[z]$, $\partial_\mu G(x,\mu)[z],$ and $G(x,\mu)$ respectively). This is a technical requirement used in order to state the results in Appendix \ref{sec:regularityofthecellproblem} in a way that allows the inductive arguments used therein to go through.
\end{remark}

Using this multi-index notation, it will be useful for us to define some spaces regarding regularity of functions in regard to these mixed derivatives. We thus make the following modifications to Definition 2.13 in \cite{CST}:
\begin{defi}\label{def:lionsderivativeclasses}
For $\bm{\zeta}$ a collection of multi-indices of the form $(n,l,\bm{\beta})\in\bb{N}\times\bb{N}\times\bb{N}^n$ and $k,j,d\in\bb{N}$ (or $k\in\bb{N}\times\bb{N}$ to denote matrix-valued functions), we define $\mc{M}_b^{\bm{\zeta}}(\R^j\times\mc{P}_2(\R^d);\R^{k})$ to be the class of functions $G:\R^j\times\mc{P}_2(\R^d)\tto \R^{k}$ such that $D^{(n,l,\bm{\beta})}G(x,\mu)[z_1,...,z_n]$ exists and satisfies
\begin{align}
\label{eq:LionsClassBoundedness} \norm{G}_{\mc{M}_b^{\bm{\zeta}}(\R^j\times\mc{P}_2(\R^d);\R^{k})}\coloneqq \sup_{(n,l,\bm{\beta})\in \bm{\zeta}}\sup_{x\in\R^j,z_1,...,z_n\in\R^d,\mu\in\mc{P}_2(\R^d)}|D^{(n,l,\bm{\beta})}G(x,\mu)[z_1,...,z_n]|&\leq C .
\end{align}
We denote the class of functions $G\in \mc{M}_b^{\bm{\zeta}}(\R^j\times\mc{P}_2(\R^d);\R^{k})$ such that:
\begin{align}
\label{eq:LionsClassLipschitz}|D^{(n,l,\bm{\beta})}G(x,\mu)[z_1,...,z_n]- D^{(n,l,\bm{\beta})}G(x',\mu')[z_1',...,z_n']|&\leq C_L\biggl(|x-x'|+\sum_{i=1}^N|z_i-z'_i|+\bb{W}_2(\mu,\mu') \biggr)
\end{align}
for all $(n,l,\bm{\beta})\in \bm{\zeta}$ and $x,x'\in\R^j,z_1,...,z_n,z_1',...,z_n'\in\R^d,\mu,\mu'\in\mc{P}_2(\R^d)$ by $\mc{M}_{b,L}^{\bm{\zeta}}(\R^j\times\mc{P}_2(\R^d);\R^{k})$. We define $\mc{M}_b^{\bm{\zeta}}(\mc{P}_2(\R^d);\R^{k})$ and $\mc{M}_{b,L}^{\bm{\zeta}}(\mc{P}_2(\R^d);\R^{k})$ analogously, where instead here $\bm{\zeta}$ a collection of multi-indices of the form $(n,\bm{\beta})\in\bb{N}\times\bb{N}^n$, and we take the $l=0$ in the above multi-index notation for the derivatives.

We will also make use of the class of functions $\mc{M}_{p}^{\bm{\zeta}}(\R^j\times\R^j\times \mc{P}_2(\R^d);\R^{k})$ which contains $G:\R^j\times\R^j\times\mc{P}_2(\R^d)\tto \R^{k}$ such that $G(\cdot,y,\cdot)\in \mc{M}_{b}^{\bm{\zeta}}(\R^j\times \mc{P}_2(\R^d);\R^{k})$ for all $y\in\R^j$, for each multi-index $(n,l,\bm{\beta})\in\bm{\zeta}$, there exists $m\in \bb{N}$ and $C>0$ such that:
\begin{align}\label{eq:newqnotation}
\sup_{x\in\R^j,z_1,...,z_n\in\R^d,\mu\in\mc{P}_2(\R^d)}|D^{(n,l,\bm{\beta})}G(x,y,\mu)[z_1,...,z_n]|&\leq C(1+|y|^{m}),
\end{align}
and there exists $\theta\in (0,1]$ such that for all $(n,l,\bm{\beta})\in \bm{\zeta}$ and $y,y',x,x'\in\R^j,z_1,...,z_n,z_1',...,z_n'\in\R^d,\mu,\mu'\in\mc{P}_2(\R^d)$:
\begin{align}\label{eq:LionsClassYLipschitz}
&|D^{(n,l,\bm{\beta})}G(x,y,\mu)[z_1,...,z_n]- D^{(n,l,\bm{\beta})}G(x',y',\mu')[z_1',...,z_n']|\\
&\leq C\biggl(1\wedge |y-y'|^\theta+|x-x'|+\sum_{i=1}^N|z_i-z'_i|+\bb{W}_2(\mu,\mu') \biggr)\biggl(1+|y|^m+|y'|^m\biggr).\nonumber
{}\end{align}

We also define $\mc{M}_b^{\bm{\zeta}}([0,T]\times\R^j\times\mc{P}_2(\R^d);\R^{k})$ to be the class of functions $G:[0,T]\times\R^j\times \mc{P}_2(\R^d)\tto \R^{k}$ such that $G(\cdot,x,\mu)$ is continuously differentiable on $(0,T)$ for all $x\in\R^j,\mu\in\mc{P}_2(\R^d)$ with time derivative denoted by $\dot{G}(t,x,\mu)$, $G(t,\cdot,\cdot) \in \mc{M}_b^{\bm{\zeta}}(\R^j\times\mc{P}_2(\R^d);\R^{k})$ for all $t\in [0,T]$, with \eqref{eq:LionsClassBoundedness} holding uniformly in $t$, and $G,\dot{G},$ and all derivatives involved in the definition of $\mc{M}_b^{\bm{\zeta}}(\R^j\times\mc{P}_2(\R^d);\R^{k})$ are jointly continuous in time, measure, and space. We define for $G\in \mc{M}_b^{\bm{\zeta}}([0,T]\times\R^j\times\mc{P}_2(\R^d);\R^{k})$
\begin{align*}
\norm{G}_{\mc{M}_b^{\bm{\zeta}}([0,T]\times\R^j\times\mc{P}_2(\R^d);\R^{k})}\coloneqq \sup_{t\in[0,T]}\norm{G(t,\cdot)}_{\mc{M}_b^{\bm{\zeta}}(\R^j\times\mc{P}_2(\R^d);\R^{k})}+\sup_{t\in[0,T],x\in\R^j,\mu\in\mc{P}_2(\R^d)}|\dot{G}(t,x,\mu)|.
\end{align*}
We denote the class of functions $G\in \mc{M}_b^{\bm{\zeta}}([0,T]\times\R^j\times\mc{P}_2(\R^d);\R^{k})$ such that \eqref{eq:LionsClassLipschitz} holds uniformly in $t$ by $\mc{M}_{b,L}^{\bm{\zeta}}([0,T]\times\R^j\times\mc{P}_2(\R^d);\R^{k})$. Again, we define $\mc{M}_b^{\bm{\zeta}}([0,T]\times\mc{P}_2(\R^d);\R^{k})$ and $\mc{M}_{b,L}^{\bm{\zeta}}([0,T]\times\mc{P}_2(\R^d);\R^{k})$ analogously.\end{defi}

It will be useful do define the following complete collections of multi-indices in the sense of Definitions \ref{def:multiindexnotation} and \ref{def:completemultiindex}:
\begin{align}\label{eq:collectionsofmultiindices}
\hat{\bm{\zeta}} &\coloneqq \br{(0,j_1,0),(1,j_2,j_3),(2,j_4,(j_5,j_6)),(3,0,(j_7,0,0)):j_1\in\br{0,1,...4},j_2+j_3\leq 4,j_4+j_5+j_6\leq 2,j_7=0,1 }\\
\hat{\bm{\zeta}}_1 &\coloneqq \br{(0,j_1,0),(1,j_2,j_3),(2,j_4,(j_5,j_6)),(3,j_7,(j_8,0,0))\nonumber\\
&\hspace{4.6cm}:j_1\in\br{0,1,...5},j_3\leq 4,j_2+j_3\leq 5,j_5+j_6\leq 2,j_4+j_5+j_6\leq 3,j_7+j_8\leq 1 }\nonumber.
\end{align}

Next, we introduce the main assumptions to be used throughout this paper. For a detailed explanation of the use of these assumptions, and in particular of \ref{assumption:regularityofcoefficientsnew}, which is stated in terms of the collections of multi-indices from Equation \eqref{eq:collectionsofmultiindices}, see Remark \ref{remark:ontheassumptions2}.
\begin{enumerate}[label={A\arabic*)}]
\item \label{assumption:uniformellipticity} There exist $\lambda_-,\lambda_+>0$ such that $0<\lambda_-\leq \frac{z^\top[\tau_1\tau_1^\top+\tau_2\tau_2^\top](x,y,\mu)z}{|z|^2}\leq \lambda_+<\infty$, $\forall x,y,z\in\R^d,z\neq 0,\mu\in\mc{P}_2(\R^d)$ and $\tau_1,\tau_2$ are bounded, have two uniformly bounded derivatives in $y$, and $\tau_1,\tau_2$ and both these derivatives are H\"older continuous in $y$ uniformly in $(x,\mu)$.
\item \label{assumption:retractiontomean}There exists constants $C,\beta>0$ independent of $x,y,\mu$ such that:
\begin{align}\label{eq:fdecayimplication}
f(x,y,\mu)\cdot y\leq -\beta |y|^2 +C,\forall x,y\in\R^d,\mu\in\mc{P}_2(\R^d),
{}\end{align}
$f$ grows at most linearly in $|y|$, $f$ has two uniformly bounded derivatives in $y$, and $f$ and both these derivatives are H\"older continuous in $y$ uniformly in $(x,\mu)$.
\item \label{assumption:centeringcondition} As is standard in averaging results, we assume also the centering condition. Namely, we assume that for $\pi$ as in Equation \eqref{eq:invariantmeasureold}, the condition (\ref{eq:centeringconditionold}) holds.
\end{enumerate}

In terms of regularity of coefficients, we assume the following assumptions on the coefficients of \eqref{eq:slow-fastMcKeanVlasov}:
\begin{enumerate}[label={A\arabic*)}]\addtocounter{enumi}{3}
\item \label{assumption:strongexistence} For $F=b,c,\sigma,f,g,\tau_1,$ or $\tau_2$, $F$ is globally Lipschitz continuous in $(x,y,\mu)$. That is, there exists $C>0$ such that for all $x,x',y,y'\in\R^d$ and $\mu,\mu'\in\mc{P}_2(\R^d)$:
\begin{align}\label{eq:globallyLipschitz}
|F(x,y,\mu)-F(x',y',\mu')|\leq C(|x-x'|+|y-y'|+\bb{W}_2(\mu,\mu')).
\end{align}
Moreover, $g$ is uniformly bounded.
\item \label{assumption:regularityofcoefficientsnew} $g,c\in \mc{M}^{\hat{\bm{\zeta}}}_p(\R^d\times\R^d\times \mc{P}_2(\R^d);\R^d),\sigma,\tau_1\in \mc{M}^{\hat{\bm{\zeta}}}_p(\R^d\times\R^d\times \mc{P}_2(\R^d);\R^{d\times m}),f,b\in \mc{M}^{\hat{\bm{\zeta}}_1}_p(\R^d\times\R^d\times \mc{P}_2(\R^d);\R^d),$ and $a\in \mc{M}^{\hat{\bm{\zeta}}_1}_p(\R^d\times\R^d\times \mc{P}_2(\R^d);\R^{d\times d})$.
\item \label{assumption:uniformellipticityDbar} For $\bar{D}$ as in Equation \eqref{eq:averagedlimitingcoefficients}, there exists $\bar{\lambda}_->0$ such that $0<\bar{\lambda}_-\leq \frac{z^\top\bar{D}(x,\mu)z}{|z|^2}$, $\forall x,z\in\R^d,z\neq 0,\mu\in\mc{P}_2(\R^d)$.
\end{enumerate}
\begin{remark}\label{remark:ontheassumptions1}
Note that under Assumption \ref{assumption:strongexistence}, for each choice of square integrable initial conditions $(\eta,\zeta)$ and each $\epsilon>0$, there exists a unique solution $\br{(X^{\epsilon,\nu}_t,Y^{\epsilon,\nu}_t),t\geq 0}$ to the system \eqref{eq:slow-fastMcKeanVlasov}  such that $\mc{L}(X^{\epsilon,\nu}_t)\in \mc{P}_2(\R^d)$ for each $t\geq 0$. See, e.g. \cite{Wang} Theorem 2.1 and Section 6.1 in \cite{RocknerMcKeanVlasov}. There are weaker assumptions under which existence and uniqueness for McKean-Vlasov SDEs have been established in the recent literature which may replace the global Lipschitz assumption \eqref{eq:globallyLipschitz} (see, e.g. \cites{HSS,Mehri,HWSingular,HWY,Ren}), but we chose the simplest of these in order to clearly illustrate our results.

Moreover, under Assumptions \ref{assumption:uniformellipticity},\ref{assumption:retractiontomean}, \ref{assumption:centeringcondition}, \ref{assumption:regularityofcoefficientsnew}, and \ref{assumption:uniformellipticityDbar}, Proposition \ref{proposition:allneededregularity} yields that $\bar{\gamma},\bar{D}^{1/2}$ are globally Lipschitz in $(x,\mu)$, so that the same result implies existence and uniqueness of solutions to the limiting averaged dynamics \eqref{eq:averagedMcKeanVlasov}.
\end{remark}
\begin{remark}\label{remark:ontheassumptions2}
The high amount of regularity imposed on the coefficients and their derivatives in Assumption \ref{assumption:regularityofcoefficientsnew} is needed to establish the analogous regularity of the averaged coefficients appearing in the limiting Equation \eqref{eq:averagedMcKeanVlasov}. This regularity of the averaged coefficients is needed for Lemma \ref{lem:cauchyproblemregularity}, which provides bounds on the derivatives of the solution to the Cauchy Problem on Wasserstein space \eqref{eq:W2CauchyProblem} which appear in the proof of Theorem \ref{theo:mckeanvlasovaveraging}-that is, those which are contained in $\dot{\bm{\zeta}}$ as defined in Theorem \ref{theo:mckeanvlasovaveraging}. Unpacking the multi-index notation, we need that $U$ has bounded Lions derivatives up to order 3, that $\partial_\mu U(\mu)[z]$ has 3 bounded derivatives in $z$, that $\partial^2_\mu U(\mu)[z_1,z_2]$ has bounded second order derivatives in $z_1,z_2$, and that $\partial^3_\mu U(\mu)[z_1,z_2,z_3]$ has bounded first order derivatives in $z_1,z_2,z_3$ (by symmetry we can just assume this in one of the auxiliary variables).

In Lemma \ref{lem:cauchyproblemregularity}, we use the results of \cite{BLPR} as extended in \cite{CST}, where the proof method is via a ``variational approach'' that requires Lipschitz continuity in $x,\mu$ of many derivatives of the coefficients of the PDE \eqref{eq:W2CauchyProblem} in order to establish existence and uniqueness of variational equations related to the derivatives of the associated process \eqref{eq:averagedMcKeanVlasov} in its initial conditions. The way the result is stated in \cite{CST}, the requirement that $U\in \mc{M}^{\dot{\bm{\zeta}}}_b(\mc{P}_2(\R^d))$ would be summarized as $U$ having all derivatives in $(\mu,z)$ of order 4 bounded, and the sufficient requirement on the coefficients in $\bar{\gamma},\bar{D}$ appearing in Equation \eqref{eq:W2CauchyProblem} would be that $\bar{\gamma},\bar{D}^{1/2}$ have all derivatives in $(x,\mu,z)$ of order 4 bounded. However, we don't require $U$ be 4 times differentiable in $\mu$, so we can require slightly less regularity of $\bar{\gamma},\bar{D}^{1/2}$, resulting in the collection of multi-indices $\hat{\bm{\zeta}}$. The assumption that some $F\in \mc{M}^{\hat{\bm{\zeta}}}_p(\R^d\times\R^d\times \mc{P}_2(\R^d);\R^{k})$, as in \ref{assumption:regularityofcoefficientsnew}, is requiring regularity of $F$, $\partial_x^{j_1} F$ for $j_1=1,2,3,4$, $\partial^{j_3}_z\partial^{j_2}_x\partial_\mu F$ for $j_2+j_3\leq 4,\partial^{j_5}_{z_1}\partial^{j_6}_{z_2}\partial^{j_4}_x\partial^2_\mu F$ for $j_4+j_5+j_6\leq 2$, $\partial^3_\mu F$, and $\partial_{z_1}\partial^3_\mu F$. Note the lack of requirement of 4-times differentiability in $\mu$. Also note that this assumption does not impose regularity in the $y$ argument, other than the joint H\"older continuity imposed by Equation \eqref{eq:LionsClassYLipschitz} in Definition \ref{def:lionsderivativeclasses}. The collection of multi-indices $\hat{\bm{\zeta}}_1$ is the result of adding one more derivative in $x$ to the derivatives represented by $\hat{\bm{\zeta}}$. This is needed to ensure the terms $\partial_x\Phi$ and $\partial_x\partial_y\Phi$ appearing in $\bar{\gamma}$ have the $\hat{\bm{\zeta}}$-regularity required for Lemma \ref{lem:cauchyproblemregularity}. See Proposition \ref{proposition:allneededregularity} for how the regularity of the prelimit coefficients assumed in \ref{assumption:regularityofcoefficientsnew} implies the required regularity of the limiting coefficients.

It is likely that the regularity of the limiting, and hence prelimit, coefficients and of the initial condition $G$ that is required in order to establish boundedness of these derivatives of solutions to \eqref{eq:W2CauchyProblem} can be weakened to, e.g. H\"older continuity of some lesser number of derivatives, as in the case for the standard Cauchy problem \cites{EidelmanBook,Friedman,LSU}. Partial results in this direction via Malliavin calculus techniques \cite{CM} and an infinite-dimensional parametrix method \cites{DF1,DF2} already exist. Extending these results to the higher number of derivatives needed in Theorem \ref{theo:mckeanvlasovaveraging} is an interesting avenue of future research, and is beyond the scope of this paper.

It is also worth noting that, with the exception of the boundedness of $g,\tau_1,$ and $\tau_2$ as well as the dissipativity assumption \ref{assumption:retractiontomean}, which are used in Lemma \ref{lemma:barYuniformbound}, all of the other imposed regularity in the Assumptions \ref{assumption:uniformellipticity}-\ref{assumption:uniformellipticityDbar} are only needed to provide (weak) existence and uniqueness of the prelimit and limiting system (\eqref{eq:slow-fastMcKeanVlasov} and \eqref{eq:averagedMcKeanVlasov} respectively), the aforementioned sufficient regularity for Lemma \ref{lem:cauchyproblemregularity}, and the needed regularity and existence/uniqueness of the auxiliary Poisson Equations used in Section \ref{section:mckeanvlasovergodictheorems}. These are simply sufficient conditions, and are be no means necessary, and these properties can also be proved on a case-by-case basis. Observe that the only assumption imposed on the limiting coefficients here is \ref{assumption:uniformellipticityDbar}, which by the representation provided in Equation \eqref{eq:alternativediffusion} will hold in most situations.
\end{remark}

\section{Main Results and examples}\label{subsection:statementofmainresults}
We are now ready to state our main results:
\begin{theo}\label{theo:mckeanvlasovaveraging}
Assume \ref{assumption:uniformellipticity}-\ref{assumption:uniformellipticityDbar}. Define
\begin{align*}
\dot{\bm{\zeta}}\coloneqq \br{(0,0),(1,j_1),(2,(j_2,j_3)),(3,(0,0,j_4)):j_1\in\br{0,1,2,3},j_2+j_3\leq 2,j_4=0,1}.
\end{align*}
Then for any $G\in \mc{M}_{b,L}^{\dot{\bm{\zeta}}}(\mc{P}_2(\R^d);\R)$, $\nu\in\mc{P}_2(\R^d)$, and $T>0$, there is $C(T)$ independent of $G$ and $\nu$ such that for $\epsilon\in(0,1]$:
\begin{align*}
\sup_{s\in [0,T]}\biggl|G(\mc{L}(X^{\epsilon,\nu}_s))-G(\mc{L}(X^\nu_s))\biggr|&\leq \epsilon C(T)\norm{G}_{\mc{M}_b^{\dot{\bm{\zeta}}}(\mc{P}_2(\R^d);\R)}.
\end{align*}
\end{theo}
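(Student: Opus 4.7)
The plan is to use the Cauchy problem on Wasserstein space \eqref{eq:W2CauchyProblem} with initial condition $G$. Let $U(t,\mu)$ denote its solution; by Lemma \ref{lem:cauchyproblemregularity} applied with the collection $\dot{\bm{\zeta}}$, all derivatives of $U$ listed in $\dot{\bm{\zeta}}$ are bounded on $[0,T]\times \mc{P}_2(\R^d)$ by a constant proportional to $\|G\|_{\mc{M}_b^{\dot{\bm{\zeta}}}(\mc{P}_2(\R^d))}$. The flow property of the averaged equation \eqref{eq:averagedMcKeanVlasov} (see Remark \ref{remark:McKeanVlasovFlowProperty}) gives $U(t,\nu) = G(\mc{L}(X^\nu_t))$, so for any fixed $t \in [0,T]$,
\begin{align*}
G(\mc{L}(X^{\epsilon,\nu}_t)) - G(\mc{L}(X^\nu_t)) = U(0, \mc{L}(X^{\epsilon,\nu}_t)) - U(t, \mc{L}(X^{\epsilon,\nu}_0)) = \int_0^t \frac{d}{ds} U(t-s, \mc{L}(X^{\epsilon,\nu}_s)) \, ds.
\end{align*}

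Next, I apply the It\^o-type chain rule for functions on $\mc{P}_2(\R^d)$ composed with the deterministic flow $s \mapsto \mc{L}(X^{\epsilon,\nu}_s)$ (cf.\ \cite{CD} Chapter 5) and use the PDE \eqref{eq:W2CauchyProblem} satisfied by $U$ to replace $\dot{U}$. Since $\bar{\gamma},\bar{D}$ depend only on $(x,\mu)$, the integrand collapses to
\begin{align*}
&\E\biggl[\biggl(\frac{1}{\epsilon} b + c - \bar{\gamma}\biggr)(X^{\epsilon,\nu}_s, Y^{\epsilon,\nu}_s, \mc{L}(X^{\epsilon,\nu}_s)) \cdot \partial_\mu U(t-s, \mc{L}(X^{\epsilon,\nu}_s))[X^{\epsilon,\nu}_s]\biggr]\\
&\quad +\E\biggl[\biggl(\tfrac{1}{2}\sigma\sigma^\top - \bar{D}\biggr)(X^{\epsilon,\nu}_s, Y^{\epsilon,\nu}_s, \mc{L}(X^{\epsilon,\nu}_s)) : \partial_z\partial_\mu U(t-s, \mc{L}(X^{\epsilon,\nu}_s))[X^{\epsilon,\nu}_s]\biggr].
\end{align*}

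The key step is to remove the $1/\epsilon$ singularity via the cell problem \eqref{eq:cellproblemold}. Since $\mc{L}_{x,\mu}\Phi = -b$ and $\int b \, \pi = 0$ by \ref{assumption:centeringcondition}, I apply It\^o's formula to $s \mapsto \epsilon \, \Phi(X^{\epsilon,\nu}_s, Y^{\epsilon,\nu}_s, \mc{L}(X^{\epsilon,\nu}_s)) \cdot \partial_\mu U(t-s, \mc{L}(X^{\epsilon,\nu}_s))[X^{\epsilon,\nu}_s]$ viewed as an $\F_s$-adapted process. The $O(1/\epsilon)$ contribution from $\frac{1}{\epsilon^2}\mc{L}_{x,\mu}\Phi$ acting on the $\Phi$-factor precisely cancels $\frac{1}{\epsilon}\, b \cdot \partial_\mu U$ in the integrand above. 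The boundary terms at $s = 0,t$ are $O(\epsilon)$ by the polynomial-in-$y$ bounds on $\Phi$ (Lemma \ref{lemma:Ganguly1DCellProblemResult}) and the uniform-in-$\epsilon$ moment bounds on $Y^{\epsilon,\nu}$ (Lemma \ref{lemma:barYuniformbound}). The remaining $O(1)$ drift contributions from It\^o involve $\nabla_x\Phi\cdot b$, $\nabla_y\Phi \cdot g$, and $\sigma\tau_1^\top : \nabla_x \nabla_y \Phi$, which assemble via \eqref{eq:limitingcoefficients} into $\gamma_1$ and $D_1$; after combining with $\bar{\gamma} - c$ and $\bar{D} - \tfrac{1}{2}\sigma\sigma^\top$, each surviving expression has zero integral against $\pi$, so a second Poisson-equation correction from the ergodic theorems of Section \ref{section:mckeanvlasovergodictheorems} pushes these pieces to $O(\epsilon)$ as well.

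The main obstacle is handling the Wasserstein derivatives of $\Phi(X^{\epsilon,\nu}_s, Y^{\epsilon,\nu}_s, \mc{L}(X^{\epsilon,\nu}_s))$ produced by the It\^o step above. The chain rule on $\mc{P}_2(\R^d)$ generates a term $\partial_\mu \Phi(x,y,\mu)[z]$ evaluated at an independent copy $z = \tilde X^{\epsilon,\nu}_s$, and crucially $\int \partial_\mu \Phi(x,y,\mu)[z] \, \pi(dy;x,\mu)$ need not vanish, so the naive Poisson-equation correction fails. As flagged in the introduction via Propositions \ref{prop:purpleterm2} and \ref{prop:newpurpleterm}, I resolve this with a doubled corrector problem \eqref{eq:doublecorrectorproblem}, posed on $\R^d \times \R^d$, together with a coupling between two copies of the fast dynamics started at $x$ and $z$ driven by the same noise; uniform integrability of the resulting corrector in the $y$-variables and Lipschitz control in $(x,z,\mu)$ are precisely what force the extra regularity encoded in $\hat{\bm{\zeta}}_1$ via \ref{assumption:regularityofcoefficientsnew} and Proposition \ref{proposition:allneededregularity}. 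Once these secondary correctors are in place, the integrand is $O(\epsilon)$ uniformly in $s \in [0,t]$, integrating gives the claimed bound with a constant $C(T)$ proportional to $\|G\|_{\mc{M}_b^{\dot{\bm{\zeta}}}(\mc{P}_2(\R^d))}$, and taking the supremum over $t \in [0,T]$ completes the proof.
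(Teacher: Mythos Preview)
Your proposal is correct and follows essentially the same route as the paper: rewrite $G(\mc{L}(X^{\epsilon,\nu}_t))-G(\mc{L}(X^\nu_t))$ via the time-reversed solution of the Wasserstein Cauchy problem, apply the chain rule on $\mc{P}_2(\R^d)$, substitute $\dot U$ using \eqref{eq:W2CauchyProblem}, and then control the resulting five pieces by Propositions \ref{prop:fluctuationestimateparticles2}, \ref{prop:purpleterm2}, \ref{prop:newpurpleterm}, and \ref{prop:llntypefluctuationestimate2} (with $F=\gamma$ and $F=D$), exactly as in Section \ref{section:proofofmainresults}. One small correction: the doubled corrector construction of Proposition \ref{prop:purpleterm2} couples two copies of the fast process driven by \emph{independent} Brownian motions $W,B,\hat W,\hat B$, not the same noise; this is what makes $\mc{L}^2_{x,\bar x,\mu}$ in \eqref{eq:2copiesgenerator} block-diagonal with product invariant measure $\bar\pi=\pi\otimes\pi$, so that the centering condition \eqref{eq:centeringconditionold} on $b$ alone yields the required centering of the right-hand side of \eqref{eq:doublecorrectorproblem}.
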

\begin{proof}
The proof is found in Section \ref{section:proofofmainresults}.
\end{proof}
\begin{corollary}\label{corollary:lineartestfunction}
In the setup of Theorem \ref{theo:mckeanvlasovaveraging}, for any $\phi\in C^4_{b,L}(\R^d)$ and $\nu\in\mc{P}_2(\R^d)$, we have there is $C(T)$ independent of $\phi$ and $\nu$ such that:
\begin{align*}
\sup_{s\in [0,T]}\biggl|\E[\phi(X^{\epsilon,\nu}_s)]-\E[\phi(X^\nu_s)]\biggr|&\leq \epsilon C(T)|\phi|_{C_b^4(\R^d)}.
\end{align*}
\end{corollary}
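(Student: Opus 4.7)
The plan is to reduce the corollary to a direct application of Theorem \ref{theo:mckeanvlasovaveraging} by choosing the linear functional $G:\mc{P}_2(\R^d)\to\R$ defined by $G(\mu)\coloneqq \langle \mu,\phi\rangle = \int_{\R^d}\phi(x)\mu(dx)$. With this choice, $G(\mc{L}(X^{\epsilon,\nu}_s))=\E[\phi(X^{\epsilon,\nu}_s)]$ and $G(\mc{L}(X^\nu_s))=\E[\phi(X^\nu_s)]$, so the entire content of the corollary is contained in the statement that $G\in \mc{M}^{\dot{\bm\zeta}}_{b,L}(\mc{P}_2(\R^d))$ with $\norm{G}_{\mc{M}^{\dot{\bm\zeta}}_b(\mc{P}_2(\R^d))}\leq C|\phi|_{C^4_b(\R^d)}$, after which Theorem \ref{theo:mckeanvlasovaveraging} yields the result with the same constant $C(T)$ (up to a universal multiplicative constant).

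The core of the proof is therefore to compute the Lions derivatives of the linear functional $G$ and read off the required bounds. By the standard computation (lifting $G$ to $\tilde G(X)=\E[\phi(X)]$ on $L^2$ and taking the Fr\'echet derivative), one obtains $\partial_\mu G(\mu)[z]=\nabla\phi(z)$, which is independent of $\mu$. Consequently every higher-order Lions derivative vanishes: $\partial^2_\mu G\equiv 0$ and $\partial^3_\mu G\equiv 0$, so all multi-indices in $\dot{\bm\zeta}$ with $n\geq 2$ contribute the zero function and are trivially bounded and Lipschitz. The remaining multi-indices in $\dot{\bm\zeta}$ are $(0,0)$ and $(1,j_1)$ for $j_1\in\{0,1,2,3\}$, whose derivatives are $G(\mu)$ itself and $\partial_z^{j_1}\partial_\mu G(\mu)[z]=\nabla^{j_1+1}\phi(z)$ respectively. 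Boundedness of these on $\mc{P}_2(\R^d)\times\R^d$ is immediate from $\phi\in C^4_b(\R^d)$, giving a bound controlled by $|\phi|_{C^4_b(\R^d)}$.

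For the Lipschitz class $\mc{M}_{b,L}^{\dot{\bm\zeta}}$, I would check \eqref{eq:LionsClassLipschitz} for each surviving multi-index: (i) for $(0,0)$, Kantorovich--Rubinstein gives $|G(\mu)-G(\mu')|\leq \norm{\nabla\phi}_\infty\,\bb{W}_2(\mu,\mu')$; (ii) for $(1,j_1)$ with $j_1\in\{0,1,2\}$, the derivative $\nabla^{j_1+1}\phi(z)$ is Lipschitz in $z$ because $\phi\in C^4_b(\R^d)$, and has no $\mu$-dependence; (iii) for $(1,3)$, one needs $\nabla^4\phi$ to be Lipschitz in $z$, which is exactly the hypothesis $\phi\in C^4_{b,L}(\R^d)$. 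All Lipschitz constants are bounded by $|\phi|_{C^4_b(\R^d)}$ (plus the Lipschitz seminorm of $\nabla^4\phi$, absorbed into the same quantity up to a constant). This verifies $G\in \mc{M}_{b,L}^{\dot{\bm\zeta}}(\mc{P}_2(\R^d))$ with the stated norm bound.

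There is no serious obstacle here; the only thing to be careful about is the match between the orders of derivatives in $\dot{\bm\zeta}$ and the regularity $C^4_{b,L}$ of $\phi$. The highest-order multi-index involving $\phi$ is $(1,3)$, which corresponds to $\nabla^4\phi$, exactly explaining why $C^4$-regularity is the right amount of smoothness required and why the Lipschitz property of the fourth derivative (i.e., $C^4_{b,L}$ rather than merely $C^4_b$) is used.
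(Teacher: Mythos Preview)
Your proposal is correct and follows essentially the same approach as the paper: define $G(\mu)=\langle\mu,\phi\rangle$, compute $\partial_\mu G(\mu)[z]=\nabla\phi(z)$ (so higher Lions derivatives vanish), verify $G\in\mc{M}_{b,L}^{\dot{\bm\zeta}}(\mc{P}_2(\R^d))$ with norm controlled by $|\phi|_{C_b^4(\R^d)}$, and invoke Theorem~\ref{theo:mckeanvlasovaveraging}. If anything, you are more explicit than the paper in spelling out the Lipschitz checks for each multi-index and in identifying why the $C^4_{b,L}$ hypothesis (rather than merely $C^4_b$) is needed for the $(1,3)$ case.
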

\begin{proof}
Considering $G$ of the form $G_\phi(\mu) = \langle \mu,\phi\rangle$ for $\phi\in C^4_{b,L}(\R^d)$ , $\partial_\mu G_\phi(\mu)[z] = \nabla\phi(z)$ (see, e.g. \cite{CD} Section 5.2.2 Example 1). Thus, $G_\phi\in\mc{M}_b^{\dot{\bm{\zeta}}}(\mc{P}_2(\R^d);\R)$, and $\norm{G_\phi}_{\mc{M}_b^{\dot{\bm{\zeta}}}(\mc{P}_2(\R^d);\R)}\leq |\phi|_{C_b^4(\R^d)}.$ The result then follows immediately via an application of Theorem \ref{theo:mckeanvlasovaveraging}.
\end{proof}

\begin{remark}\label{remark:nomeasuredependence}
Theorem \ref{theo:mckeanvlasovaveraging} holds in the situation where the coefficients in Equation \eqref{eq:slow-fastMcKeanVlasov} are independent of $\mu$. In this setting, Assumption \ref{assumption:regularityofcoefficientsnew} is imposing that 4 derivatives of $g,\sigma,\tau_1,c$ in $x$ and 5 derivatives of $f,a,b$ in $x$ grow at most polynomially in $y$ and are jointly Lipschitz/Locally H\"older continuous in $(x,y)$ in the sense of Equation \eqref{eq:LionsClassLipschitz}. This thus extends the results of \cite{RocknerFullyCoupled} Theorem 2.3 Regime 4 with $\vartheta =1$ from test functions of the form $G_\phi$ as in Corollary \ref{corollary:lineartestfunction} to the more general class of (possibly non-linear) test functions in $\mc{M}_{b,L}^{\dot{\bm{\zeta}}}(\mc{P}_2(\R^d);\R)$.
\end{remark}
\subsection{Examples: A Class of Aggregation-Diffusion Equations}\label{SS:Example}
A common form for interacting particle systems which are widely used in many settings such as in biology, ecology, social sciences, economics, molecular dynamics, and in study of spatially homogeneous granular media (see \cites{MT,Garnier1,BCCP,KCBFL} and the references therein) is:
\begin{align}\label{eq:nomultilangevin}
dX^{i,N}_t &= -\nabla V(X^{i,N}_t)dt - \frac{1}{N}\sum_{j=1}^N \nabla W(X^{i,N}_t-X^{j,N}_t)dt +\sigma dW^i_t,\quad X^{i,N}_0=\eta^i,
\end{align}
where $V:\R^d\tto\R$ is a sufficiently smooth confining potential, $W:\R^d\tto \R$ is a sufficiently smooth interaction potential, $\sigma>0$, and $W^i$ are iid $d-$dimensional Brownian motions. The class of systems \eqref{eq:nomultilangevin} contains the system in the seminal paper \cite{Dawson}, where many mathematical aspects of a model for cooperative behavior in a bi-stable confining potential with attraction to the mean are explored.

Under sufficient regularity on $V$ and $W$ and exchangeability assumptions on the initial conditions, as $N\toinf$, the particles $X^{i,N}$ become IID and equal in distribution to $\tilde{X}$ obeying the dynamics of the Aggregation-Diffusion Equation
\begin{align}\label{eq:aggregationdiffusion}
d\tilde{X}_t & = -\nabla V(\tilde{X}_t)dt- \bar{\E}[\nabla W(x-\bar{X}_t)]|_{x=\tilde{X}_t}dt + \sigma dW_t,\quad \tilde{X}_0=\eta,
\end{align}
where here we are denoting by $\bar{X}_t$ an independent copy of $\tilde{X}_t$ on another probability space $(\bar{\W},\bar{\F},\bar{\Prob})$, and by $\bar{\E}$ the expectation on that space, and the distribution of $\eta$ is determined by that of the $\eta^i$'s.

Inspired by the pervasiveness of such systems in the literature, we consider the fully-coupled slow-fast system of Aggregation-Diffusions Equations:

\begin{align}\label{eq:slowfastLangevin}
dX^{\epsilon,\nu}_t &= -[\nabla V_1(X^{\epsilon,\nu}_t)+\frac{1}{\epsilon}\nabla V_2(Y^{\epsilon,\nu}_t)]dt - \bar{\E}[\nabla W_1(x-\bar{X}^{\epsilon,\nu}_t)]|_{x=X^{\epsilon,\nu}_t}dt +\sigma dW_t\\
dY^{\epsilon,\nu}_t & = -\frac{1}{\epsilon}[\nabla V_3(X^{\epsilon,\nu}_t)+\frac{1}{\epsilon}\nabla V_4(Y^{\epsilon,\nu}_t)]dt - \frac{1}{\epsilon}\bar{\E}[\nabla W_2(x-\bar{X}^{\epsilon,\nu}_t)]|_{x=X^{\epsilon,\nu}_t}dt + \frac{1}{\epsilon}\tau_1 dW_t+\frac{1}{\epsilon}\tau_2 dB_t\nonumber\\
(X^{\epsilon,\nu}_0,Y^{\epsilon,\nu}_0)& = (\eta,\zeta),\nonumber
\end{align}
where $V_k:\R^d\tto \R,k=1,...,4$, $W_1,W_2:\R^d\tto \R$, $\bar{X}^{\epsilon,\nu},\bar\E$ as in Equation \eqref{eq:aggregationdiffusion}, and the rest of the setup is as in Equation \eqref{eq:slow-fastMcKeanVlasov} with $d=m$.

This is falls into the class of systems \eqref{eq:slow-fastMcKeanVlasov} with
\begin{align*}
f(x,y,\mu) &= -\nabla V_4(y)\\
b(x,y,\mu) & = -\nabla V_2(y)\\
c(x,y,\mu) &= -\nabla V_1(x) - \langle \mu, \nabla W_1(x-\cdot)\rangle \\
g(x,y,\mu) &= -\nabla V_3(x) - \langle \mu, \nabla W_2(x-\cdot)\rangle  \\
\sigma(x,y,\mu)&\equiv  \sigma I \\
\tau_1(x,y,\mu)&\equiv  \tau_1 I \\
\tau_2(x,y,\mu)&\equiv \tau_2I .
\end{align*}
\begin{example}\label{example:slowfastaggdiff}
Consider the system \eqref{eq:slowfastLangevin}.

Suppose $\alpha\coloneqq \frac{1}{2}[\tau_1^2+\tau_2^2]>0$, there exists $C,\beta>0$ such that $\nabla V_4(y)\cdot y +C \geq \beta |y|^2$, $\nabla V_4$ grows at most linearly in $y$ and has two locally H\"older continuous, bounded derivatives, $\nabla V_2$ is Lipschitz continuous, and $V_1,V_3\in C_{b,L}^5(\R^d), W_1,W_2\in C_{b,L}^6(\R^d)$.

Moreover, assume
\begin{align*}
\int_{\R^d}\nabla V_2(y)\exp(-V_4(y)/\alpha)dy=0.
\end{align*}
Then for any $G\in \mc{M}_{b,L}^{\dot{\bm{\zeta}}}(\mc{P}_2(\R^d);\R)$, $\nu\in\mc{P}_2(\R^d)$, and $T>0$:
\begin{align*}
\sup_{s\in [0,T]}\biggl|G(\mc{L}(X^{\epsilon,\nu}_s))-G(\mc{L}(X^\nu_s))\biggr|&\leq \epsilon C(T)\norm{G}_{\mc{M}_b^{\dot{\bm{\zeta}}}(\mc{P}_2(\R^d);\R)},
\end{align*}
where here $X^\nu_t$ satisfies:
\begin{align}\label{eq:averagedaggregationdiffusioneqn}
dX^\nu_t &= -[\alpha_1\nabla V_3(X^\nu_t)+ \nabla V_1(X^\nu_t)]dt -\bar{\E}[\alpha_1\nabla W_2(x-\bar{X}^\nu_t)+\nabla W_1(x-\bar{X}^\nu_t)]|_{x=X^\nu_t}dt\\
&+[\sigma^2I+2\alpha\alpha_2  +\sigma\tau_1[\alpha_1+\alpha_1^\top]]^{1/2}dW^2_t\nonumber\\
X^\nu_0& = \eta^2\sim\nu \nonumber\\
\alpha_1&=Z^{-1}\int_{\R^d} \nabla\Phi(y)\exp\biggl(\frac{-V_4(y)}{\alpha}\biggr)dy\nonumber\\
\alpha_2 &= Z^{-1}\int_{\R^d} \nabla\Phi(y)(\nabla\Phi)^\top(y)\exp\biggl(\frac{-V_4(y)}{\alpha}\biggr)dy\nonumber\\
Z& = \int_{\R^d}\exp\biggl(\frac{-V_4(y)}{\alpha}\biggr)dy.\nonumber
\end{align}
\end{example}
\begin{proof}

In this setting the invariant measure $\pi$ from Equation \eqref{eq:invariantmeasureold} admits a density of the form
\begin{align*}
\pi(y)& = Z^{-1}\exp\biggl(\frac{-V_4(y)}{\alpha}\biggr).
\end{align*}
Thus Assumptions \ref{assumption:uniformellipticity}-\ref{assumption:centeringcondition} hold by supposition. In addition, noting that as per, e.g. \cite{CD} Section 5.2.2 Example 1, $\partial_\mu c(x,y,\mu)[z]= -\nabla^2 W_1(x-z)$ and similarly for $g$, we can see that the current assumptions also imply Assumptions \ref{assumption:strongexistence} and \ref{assumption:regularityofcoefficientsnew}. Here for the Lipschitz continuity of $c$ and $g$ in $\bb{W}_2$, we use the boundedness of their Lions derivatives and Remark 5.27 in \cite{CD}. Note that Assumption \ref{assumption:uniformellipticityDbar} is only being used in Proposition \ref{proposition:allneededregularity} in order to obtain that $\bar{D}^{1/2}\in \mc{M}_{b,L}^{\hat{\bm{\zeta}}}(\R^d\times\mc{P}_2(\R^d);\R^{d\times d})$ knowing $\bar{D}\in\mc{M}_{b,L}^{\hat{\bm{\zeta}}}(\R^d\times\mc{P}_2(\R^d);\R^{d\times d}),$ but here this fact is trivial since both are constant, and hence we need not worry about uniform positive-definiteness of $\bar{D}$. Thus, this is a direct application of Theorem \ref{theo:mckeanvlasovaveraging}.
\end{proof}

An interesting subcase of the above example is when $V_2=V_4=: Q$ is 1-periodic in all directions, $V_1=V_3=: V,W_1=W_2=: W,\sigma=\tau_1>0,\tau_2=0$, so that $Y^{\epsilon,\nu}_t$ has the same dynamics as $X^{\epsilon,\nu}_t$ but order $1/\epsilon$ faster. In this situation, the dynamics of $Y^{\epsilon,\nu}$ are confined to the torus, so we need not worry about its integrability (in particular, we may drop Equation \eqref{eq:fdecayimplication} from Assumption \ref{assumption:retractiontomean}). This corresponds to the standard Aggregation-Diffusion Equation \eqref{eq:nomultilangevin}, but where we replace the confining potential $V$ with a rough potential $V^\epsilon(x) = V(x)+Q(x/\epsilon)$. Note that in this situation the centering condition \ref{assumption:centeringcondition} automatically holds.

In the case that we have a separable fluctuating part, that is $Q(y_1,...,y_d)=Q_1(y_1)+...+Q_d(y_d)$, the limiting equation \eqref{eq:averagedaggregationdiffusioneqn} becomes more tractable, as everything is explicitly computable. It reads:
\begin{align}\label{eq:averagedaggdifperioduccase}
dX^\nu_t &= -\Theta\nabla V(X^\nu_t)dt - \bar{\E}[\Theta \nabla W(x-\bar{X}^\nu_t)]|_{x=X^\nu_t}dt +\sigma\Theta^{1/2}dW^2_t\\
X^\nu_0 &=\eta^2\sim\nu\nonumber\\
\Theta &\coloneqq \text{diag}\biggl[Z_1^{-1}\hat{Z}_1^{-1},..., Z_d^{-1}\hat{Z}_d^{-1}\biggr]\nonumber\\
Z_k&\coloneqq \int_0^1 \exp(-2Q_k(y)/\sigma^2)dy,\hat{Z}_k\coloneqq \int_0^1 \exp(2Q_k(y)/\sigma^2)dy,k=1,...,d\nonumber.
\end{align}

Observe that $Z_k^{-1}\hat{Z}_k^{-1}\in (0,1]$ for $k=1,...,d$. Thus, the effect of averaging is not only that the effective diffusivity of the aggregation-diffusion equation is decreased and the magnitude of the effective confining potential is decreased in all directions (as is well understood- see e.g. \cites{Zwanzig,DP}), but also that the magnitude of the effective interaction potential is decreased in all directions. This is remarkable considering the fact that, considering the Aggregation-Diffusion Equation \eqref{eq:nomultilangevin} as the limit of the particle system \eqref{eq:aggregationdiffusion}, the addition of multiscale structure through modifying $V$ to $V^\epsilon$ is a priori only effecting the motion of each particle, not their means of interaction. Note that this is not a byproduct of the fact that we consider the limit as $\epsilon\downarrow 0$ after $N\toinf$, as the limits have been shown to commute \cite{BS}. See Figure \ref{fig:effectivepotentials} for an example of this rescaling of both the confining and interaction potentials in the setting of the seminal paper \cite{Dawson}, where the confining potential is Curie-Weiss and the interaction potential is quadratic. Note that in practice the interaction and diffusion potentials must be mollified so that they are bounded as $|x|\toinf$ to fit into the regime of Example \ref{example:slowfastaggdiff}.
\begin{figure}[t]
\centering
\includegraphics[width=12cm]{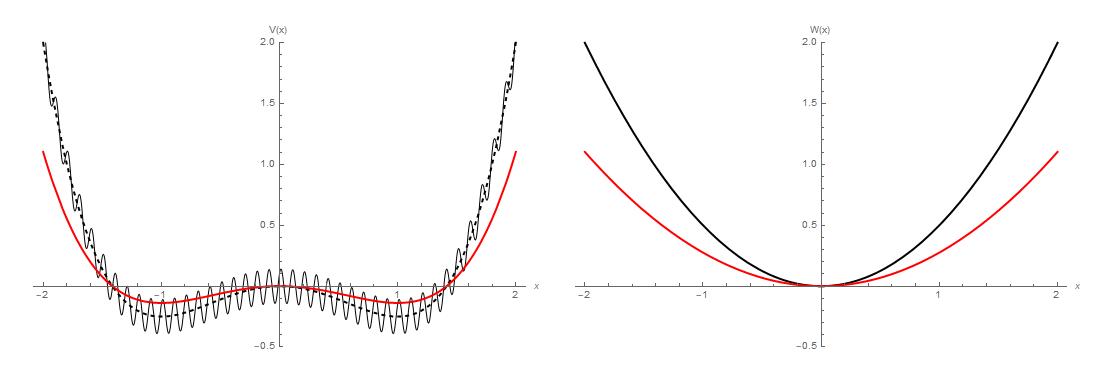}
\caption{Left: A rough confining potential $V^\epsilon(x) = \frac{x^4}{4}-\frac{x^2}{2}+.1[\cos(2\pi x/\epsilon)+\sin(2\pi x/\epsilon)]$ with $\epsilon=.1$ (black) overlaid on its non-rough counterpart (dashed). The corresponding effective confining potential after averaging is given in red.
Right: A prelimit interaction potential $W(x)=\frac{1}{2}x^2$ (black) and the corresponding effective interaction potential after averaging (red). The calculations are done with $\sigma=.5$.}
\label{fig:effectivepotentials}
\end{figure}

\section{Ergodic-Type Theorems for Fully-Coupled McKean-Vlasov SDEs}\label{section:mckeanvlasovergodictheorems}
In this subsection, we use the method of auxiliary Poisson equations to derive rates of averaging in the form of Ergodic-Type Theorems for the Slow-Fast McKean-Vlasov System \eqref{eq:slow-fastMcKeanVlasov}. This method is standard for averaging of diffusions. In particular, the analogous result to Proposition \ref{prop:fluctuationestimateparticles2} is necessary to see what the form of the limiting equation should be, and an analogous result is commonplace in the literature on averaging fully-coupled standard diffusions - see e.g. \cite{PV2} Theorem 4 and \cite{RocknerFullyCoupled} Lemma 4.4. A major difference here is, since we consider the Cauchy-Problem on Wasserstein Space \eqref{eq:W2CauchyProblem}, the test-function $\psi$'s domain is not only time and space, but also a measure component. When in some sense rates of convergence of the slow-fast system to the limiting averaged system must be established (such as when fluctuations are being considered), an analogous result to Proposition \ref{prop:llntypefluctuationestimate2} is often employed - see e.g. \cite{Spiliopoulos2014Fluctuations} Lemma 4.1,\cite{MS} Lemma B.5, and \cite{RocknerFullyCoupled} Lemma 4.2. For an analogous result in the context of McKean-Vlasov SDEs, see the proof of Theorem 2.5 in \cite{RocknerMcKeanVlasov}, although there they do not consider a Fully-Coupled system, so there is no need for the inclusion of the test function $\psi$.

It is worth noting that the terms being controlled in Propositions \ref{prop:purpleterm2} and \ref{prop:newpurpleterm} below are unique to slow-fast McKean-Vlasov SDEs (or their weakly interacting particle system counterparts), and thus do not appear in the one-particle setting. Thus the ``doubled Poisson equation'' construction (see Equations \eqref{eq:doublecorrectorproblem} and \eqref{eq:tildechi}) and the proof of Propositions \ref{prop:purpleterm2} and \ref{prop:newpurpleterm} are novel to this paper and its inspiring work \cite{BS_MDP2022}.
We begin with the following Lemma, which provides a necessary bound on the moments of the fast process needed for the Ergodic-Type Theorems:
\begin{lemma}\label{lemma:barYuniformbound}Assume \ref{assumption:uniformellipticity}, \ref{assumption:retractiontomean}, and \ref{assumption:strongexistence}. Then any $p\in\bb{N}$:\begin{align*}
\sup_{\epsilon\in(0,1]}\sup_{t\in [0,T]}\E\biggl[|Y^{\epsilon,\nu}_t|^{2p}\biggr]\leq C(p)+\E[|\zeta|^{2p}].
\end{align*}
\end{lemma}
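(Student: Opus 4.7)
The plan is to apply It\^{o}'s formula to $|Y^{\epsilon,\nu}_t|^{2p}$, use the dissipativity of $f$ from \ref{assumption:retractiontomean} to produce a linear absorbing term of order $\epsilon^{-2}$, and then absorb all lower-order moment terms using Young's inequality. Once the resulting differential inequality is in closed form in $m(t) \coloneqq \E[|Y^{\epsilon,\nu}_t|^{2p}]$, Gr\"onwall yields a bound independent of $\epsilon$.

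In detail, first note that \ref{assumption:uniformellipticity} gives uniform boundedness of $\tau_1,\tau_2$ (and the trace of $\tau_1\tau_1^\top+\tau_2\tau_2^\top$) and \ref{assumption:strongexistence} gives uniform boundedness of $g$. Applying It\^{o}'s formula to $\phi(y)=|y|^{2p}$ evaluated at $Y^{\epsilon,\nu}_t$ (up to a localizing stopping time $\tau_R=\inf\{t: |Y^{\epsilon,\nu}_t|\geq R\}$ so the stochastic integral is a genuine martingale), taking expectation, and using \ref{assumption:retractiontomean} via
\[
2p|y|^{2p-2}\,y\cdot f(x,y,\mu)\le -2p\beta|y|^{2p}+2pC|y|^{2p-2},
\]
yields, for $t \le \tau_R\wedge T$,
\begin{align*}
\frac{d}{dt}\E[|Y^{\epsilon,\nu}_{t\wedge\tau_R}|^{2p}]
&\le -\frac{2p\beta}{\epsilon^2}\E[|Y^{\epsilon,\nu}_{t\wedge\tau_R}|^{2p}]
+\frac{C_1}{\epsilon^2}\E[|Y^{\epsilon,\nu}_{t\wedge\tau_R}|^{2p-2}]
+\frac{C_2}{\epsilon}\E[|Y^{\epsilon,\nu}_{t\wedge\tau_R}|^{2p-1}],
\end{align*}
where $C_1$ depends on $C,p,\lambda_+$ and $C_2$ depends on $p$ and $\|g\|_\infty$.

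Second, use Young's inequality: for any $\delta>0$,
\[
|y|^{2p-2}\le \delta|y|^{2p}+C_\delta,\qquad |y|^{2p-1}\le \delta|y|^{2p}+C_\delta.
\]
Choosing $\delta$ small depending only on $p,\beta,C_1$ absorbs the $\epsilon^{-2}$ lower-order term into half the dissipative term. For the $\epsilon^{-1}$ term, restrict attention to $\epsilon\le 1$ (the statement for $\epsilon$ in a bounded range away from $0$ is trivial via standard moment estimates) so that $\epsilon^{-1}\le \epsilon^{-2}$, and again choose $\delta$ to absorb this term. We obtain, for a constant $C_3=C_3(p,\beta,\|g\|_\infty,\lambda_+,C)$ independent of $\epsilon$ and $R$,
\[
\frac{d}{dt}\E[|Y^{\epsilon,\nu}_{t\wedge\tau_R}|^{2p}]
\le -\frac{p\beta}{\epsilon^2}\E[|Y^{\epsilon,\nu}_{t\wedge\tau_R}|^{2p}]+\frac{C_3}{\epsilon^2}.
\]

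Finally, Gr\"onwall's lemma gives
\[
\E[|Y^{\epsilon,\nu}_{t\wedge\tau_R}|^{2p}]
\le e^{-p\beta t/\epsilon^2}\E[|\zeta|^{2p}]+\frac{C_3}{p\beta}\bigl(1-e^{-p\beta t/\epsilon^2}\bigr)
\le \E[|\zeta|^{2p}]+\frac{C_3}{p\beta},
\]
uniformly in $\epsilon\in(0,1]$, $R>0$, and $t\in[0,T]$. Sending $R\to\infty$ by Fatou's lemma (strong existence from \ref{assumption:strongexistence} ensures $\tau_R\to\infty$ a.s.) removes the localization and completes the proof.

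The only delicate step is the justification of the differential inequality up to a stopping time together with the removal of the localization at the end; the rest is a straightforward dissipative moment estimate. No essentially new idea is needed beyond the standard energy-type argument, with the one bookkeeping caveat that the two disparate scales $\epsilon^{-2}$ and $\epsilon^{-1}$ in the drift must both be absorbed into the single dissipative term of order $\epsilon^{-2}$, which is why the restriction $\epsilon\le 1$ is convenient.
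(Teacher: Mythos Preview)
Your proof is correct and follows essentially the same strategy as the paper: apply It\^o's formula to $|Y^{\epsilon,\nu}_t|^{2p}$, use the dissipativity \eqref{eq:fdecayimplication} to produce the absorbing term $-2p\beta\epsilon^{-2}|y|^{2p}$, absorb the lower-order moments via Young's inequality, and conclude with a Gr\"onwall/comparison argument. Two minor technical differences are worth noting: (i) to justify that the stochastic integrals vanish in expectation, the paper invokes an a priori (but $\epsilon$-dependent) moment bound $\E\int_0^T|Y^{\epsilon,\nu}_s|^m\,ds\le C(\epsilon,m,T)$, whereas you use a cleaner localization-plus-Fatou argument; (ii) to absorb the $\epsilon^{-1}|y|^{2p-1}$ term, the paper performs a sharper Young's inequality with tuned exponents so that the residual constant is $C(p)\epsilon^{2p-2}$, while you simply use $\epsilon^{-1}\le\epsilon^{-2}$ for $\epsilon\le 1$ and dismiss large $\epsilon$ separately. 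Both routes yield the same uniform-in-$(\epsilon,t)$ bound, and the paper's final step (``since $\epsilon\downarrow 0$'') is in fact also an implicit restriction to bounded $\epsilon$, so your explicit restriction is no loss.
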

\begin{proof}
We first note that, by It\^o's formula (suppressing the arguments of the coefficients for readability):
\begin{align*}
\E\biggl[ |Y^{\epsilon,\nu}_t|^{2p}\biggr] & = \E[|\zeta|^{2p}] + 2p\int_0^t \E\biggl[ \frac{1}{\epsilon^2}\biggl(f\cdot Y^{\epsilon,\nu}_s|Y^{\epsilon,\nu}_s|^2+(p-1)[\tau_1^2+\tau_2^2]:Y^{\epsilon,\nu}_s\otimes Y^{\epsilon,\nu}_s\biggr)|Y^{\epsilon,\nu}_s|^{2p-4}\biggr]ds \\
&+ \frac{p}{\epsilon^2}\int_0^t \E\biggl[\text{Tr}(\tau_1^2+\tau_2^2)|Y^{\epsilon,\nu}_s|^{2p-2} \biggr]ds+ \frac{2p}{\epsilon}\int_0^t \E\biggl[g\cdot Y^{\epsilon,\nu}_s|Y^{\epsilon,\nu}_s|^{2p-2}\biggr]ds \\
&+\frac{2p}{\epsilon}\E\biggl[\int_0^t |Y^{\epsilon,\nu}_s|^{2p-2}(Y^{\epsilon,\nu}_s)^\top \tau_1 dW_s+\int_0^t |Y^{\epsilon,\nu}_s|^{2p-2}(Y^{\epsilon,\nu}_s)^\top \tau_2 dB_s\biggr]\\
& =  \E[|\zeta|^{2p}] + 2p\int_0^t \E\biggl[ \frac{1}{\epsilon^2}\biggl(f\cdot Y^{\epsilon,\nu}_s|Y^{\epsilon,\nu}_s|^2+(p-1)[\tau_1^2+\tau_2^2]:Y^{\epsilon,\nu}_s\otimes Y^{\epsilon,\nu}_s\biggr)|Y^{\epsilon,\nu}_s|^{2p-4}\biggr]ds \\
&+ \frac{p}{\epsilon^2}\int_0^t \E\biggl[\text{Tr}(\tau_1^2+\tau_2^2)|Y^{\epsilon,\nu}_s|^{2p-2} \biggr]ds+ \frac{2p}{\epsilon}\int_0^t \E\biggl[g\cdot Y^{\epsilon,\nu}_s|Y^{\epsilon,\nu}_s|^{2p-2}\biggr]ds,
\end{align*}
using that by the boundedness of $\tau_1,\tau_2,g$ and linear growth of $f$ that for each $\epsilon>0$ and $m\in\bb{N}$, $\E\biggl[ \int_0^T |Y^{\epsilon,\nu}_s|^m ds\biggr]\leq C(\epsilon,m,T)$.
Continuing, we have by Equation \eqref{eq:fdecayimplication} and boundedness of $\tau_1,\tau_2,g$:
\begin{align*}
\frac{d}{dt}\E\biggl[ |Y^{\epsilon,\nu}_t|^{2p}\biggr] & \leq  -\frac{2p\beta}{\epsilon^2} \E\biggl[|Y^{\epsilon,\nu}_t|^{2p}\biggr] +  \frac{C(p)}{\epsilon^2}\E\biggl[|Y^{\epsilon,\nu}_t|^{2p-2}\biggr] + \frac{C(p)}{\epsilon} \E\biggl[|Y^{\epsilon,\nu}_t|^{2p-1}\biggr]\\
&\leq -\frac{2p\beta}{\epsilon^2}\E\biggl[|Y^{\epsilon,\nu}_t|^{2p}\biggr] +\frac{p\beta}{2\epsilon^2} \E\biggl[|Y^{\epsilon,\nu}_t|^{2p}\biggr] + \frac{C(p)}{\epsilon^{2}}+\frac{C(p)}{\epsilon}\E\biggl[|Y^{\epsilon,\nu}_t|^{2p-1}\biggr].
\end{align*}
Here we wrote for $\eta>0$:
\begin{align*}
\frac{C(p)}{\epsilon^2}|Y^{\epsilon,\nu}_s|^{2p-2} =\frac{1}{\epsilon^2}\frac{C(p)}{\eta}\eta|Y^{\epsilon,\nu}_s|^{2p-2} \leq \frac{1}{\epsilon^2}\frac{1}{p}\biggl(\frac{C(p)}{\eta}\biggr)^{p} + \frac{1}{\epsilon^2}\frac{2p-2}{2p}\biggl(\eta|Y^{\epsilon,\nu}_s|^{2p-2}\biggr)^{2p/(2p-2)}\end{align*}
using Young's inequality, and let  $\eta = \biggl(\frac{p^2\beta}{2(p-1)} \biggr)^{(2p-2)/(2p)}$.

Similarly, writing
\begin{align*}
\frac{C(p)}{\epsilon}|Y^{\epsilon,\nu}_s|^{2p-1} = \frac{C(p)}{\epsilon^{1-\alpha}\eta}\frac{\eta|Y^{\epsilon,\nu}_s|^{2p-1}}{\epsilon^\alpha}\leq \frac{1}{2p}\biggl(\frac{C(p)}{\epsilon^{1-\alpha}\eta}\biggr)^{2p} + \frac{2p-1}{2p}\biggl(\frac{\eta|Y^{\epsilon,\nu}_s|^{2p-1}}{\epsilon^\alpha}\biggr)^{2p/(2p-1)}
\end{align*}
and letting $\alpha = \frac{2p-1}{p}$ and $\eta = \biggl(\frac{p^2\beta}{2p-1}\biggr)^{(2p-1)/(2p)}$, we get
\begin{align*}
\frac{d}{dt}\E\biggl[ |Y^{\epsilon,\nu}_t|^{2p}\biggr] & \leq -\frac{2p\beta}{\epsilon^2}\E\biggl[|Y^{\epsilon,\nu}_t|^{2p}\biggr] +\frac{p\beta}{2\epsilon^2}\E\biggl[|Y^{\epsilon,\nu}_t|^{2p}\biggr] + \frac{C(p)}{\epsilon^{2}}+\frac{p\beta}{2\epsilon^2}\E\biggl[|Y^{\epsilon,\nu}_t|^{2p}\biggr]+\frac{C(p)}{\epsilon^{2-2p}}\\
& =  -\frac{p\beta}{\epsilon^2}\E\biggl[|Y^{\epsilon,\nu}_t|^{2p}\biggr] + \frac{C(p)}{\epsilon^{2}}+\frac{C(p)}{\epsilon^{2-2p}}.
\end{align*}

Now, recalling that if $g'(s)\leq -\gamma g(s)+C,\forall s\in[0,t],$ then $g(t)\leq C\int_0^t e^{-\gamma(t-s)}ds +e^{-\gamma t}g(0)$, we have
\begin{align*}
\E\biggl[ |Y^{\epsilon,\nu}_t|^{2p}\biggr] & \leq C(p)[\frac{1}{\epsilon^2}+\frac{1}{\epsilon^{2-2p}}] \exp(-\frac{p\beta}{\epsilon^2}t)\int_0^t \exp(\frac{p\beta}{\epsilon^2}s)ds + \E[|\zeta|^{2p}]\exp(-\frac{p\beta}{\epsilon^2}t)\\
& =  C(p)[\frac{1}{\epsilon^2}+\frac{1}{\epsilon^{2-2p}}] \frac{\epsilon^2}{p\beta}\exp(-\frac{p\beta}{\epsilon^2}t)[\exp(\frac{p\beta}{\epsilon^2}t)-1]+ \E[|\zeta|^{2p}]\exp(-\frac{p\beta}{\epsilon^2}t)\\
&\leq C(p)[1+\epsilon^{2p}] +\E[|\zeta|^{2p}]\\
&\leq C(p)+\E[|\zeta|^{2p}]
\end{align*}\
since $p>0$ and $\epsilon\in (0,1]$. Since the bound is uniform in $t$, we are done.
\end{proof}
We now provide the aforementioned Ergodic-Type Theorems and their proofs:
\begin{proposition}\label{prop:fluctuationestimateparticles2}
Consider $\psi\in \mc{M}_b^{\tilde{\bm{\zeta}}}([0,T]\times\R^d\times\mc{P}_2(\R^d);\R^d),$  where here
\begin{align*}
\tilde{\bm{\zeta}}&\coloneqq \br{(0,j_1,0),(1,0,j_2):j_1=0,1,2,j_2=0,1}.
\end{align*} \
Assume \ref{assumption:uniformellipticity} - \ref{assumption:regularityofcoefficientsnew}. Then for any $t\in[0,T]$ and $\epsilon\in (0,1]$:\begin{align*}
&\biggl|\E\biggl[\int_0^t \frac{1}{\epsilon}b(X^{\epsilon,\nu}_s,Y^{\epsilon,\nu}_s,\mc{L}(X^{\epsilon,\nu}_s))\cdot\psi(s,X^{\epsilon,\nu}_s,\mc{L}(X^{\epsilon,\nu}_s))ds - \int_0^t  \gamma_1(X^{\epsilon,\nu}_s,Y^{\epsilon,\nu}_s,\mc{L}(X^{\epsilon,\nu}_s))\cdot\psi(s,X^{\epsilon,\nu}_s,\mc{L}(X^{\epsilon,\nu}_s))\\
&+2D_1(X^{\epsilon,\nu}_s,Y^{\epsilon,\nu}_s,\mc{L}(X^{\epsilon,\nu}_s)):\partial_x\psi(s,X^{\epsilon,\nu}_s,\mc{L}(X^{\epsilon,\nu}_s))ds\\
&-\int_0^t \int_{\R^{2d}}\psi^\top(s,X^{\epsilon,\nu}_s,\mc{L}(X^{\epsilon,\nu}_s))\partial_\mu \Phi(X^{\epsilon,\nu}_s,Y^{\epsilon,\nu}_s,\mc{L}(X^{\epsilon,\nu}_s))[x]b(x,y,\mc{L}(X^{\epsilon,\nu}_s))\mc{L}(X^{\epsilon,\nu}_s,Y^{\epsilon,\nu}_s)(dx,dy)ds\\
&-\int_0^t \int_{\R^{2d}}\Phi^\top(X^{\epsilon,\nu}_s,Y^{\epsilon,\nu}_s,\mc{L}(X^{\epsilon,\nu}_s))\partial_\mu \psi(s,X^{\epsilon,\nu}_s,\mc{L}(X^{\epsilon,\nu}_s))[x]b(x,y,\mc{L}(X^{\epsilon,\nu}_s))\mc{L}(X^{\epsilon,\nu}_s,Y^{\epsilon,\nu}_s)(dx,dy)ds\biggr]\biggr|\\
&\leq C\epsilon(1+t)\norm{\psi}_{\mc{M}_b^{\tilde{\bm{\zeta}}}([0,T]\times\R^d\times\mc{P}_2(\R^d);\R^d)}.
\end{align*}
\end{proposition}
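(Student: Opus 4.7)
My plan is to exploit the corrector $\Phi$ from the cell problem \eqref{eq:cellproblemold} to cancel the singular $\tfrac{1}{\epsilon}$-factor, by applying a McKean--Vlasov It\^o formula to
\begin{align*}
\epsilon F(s,X^{\epsilon,\nu}_s,Y^{\epsilon,\nu}_s,\mc{L}(X^{\epsilon,\nu}_s)),\qquad F(s,x,y,\mu)\coloneqq \Phi(x,y,\mu)\cdot\psi(s,x,\mu),
\end{align*}
between $s=0$ and $s=t$ and taking expectations. Because $\psi$ does not depend on $y$, the product rule gives $\mc{L}_{x,\mu}F = \psi\cdot \mc{L}_{x,\mu}\Phi = -\psi\cdot b$ by \eqref{eq:cellproblemold}. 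The dominant $O(1/\epsilon^2)$ piece of the It\^o expansion, $\tfrac{1}{\epsilon^2}\mc{L}_{x,\mu}F\,ds$, therefore contributes exactly $-\tfrac{1}{\epsilon}\psi\cdot b\,ds$ to $d[\epsilon F]$, and rearranging the resulting identity isolates the quantity $\E\int_0^t \tfrac{1}{\epsilon}b\cdot\psi\,ds$ appearing in the statement.

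I then identify the remaining $O(1)$ contributions in the It\^o expansion with the four terms on the right-hand side of the estimate. The drift and cross-quadratic variation pieces combine via the product rule and the definitions \eqref{eq:limitingcoefficients} as
\begin{align*}
\nabla_x F\cdot b + \nabla_y F\cdot g + \sigma\tau_1^\top:\nabla_x\nabla_y F = \psi\cdot \gamma_1 + D_1:\nabla_x\psi,
\end{align*}
while the McKean--Vlasov measure-derivative term carrying the singular coefficient $\tfrac{1}{\epsilon}b$ splits, using $\partial_\mu F[x] = \psi\cdot\partial_\mu\Phi[x] + \Phi\cdot\partial_\mu\psi[x]$, into the two measure integrals appearing in the proposition. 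The It\^o stochastic integrals disappear in expectation once the integrands are checked to be square integrable. All other terms in the expansion carry an explicit factor of $\epsilon$: the time derivative $\epsilon\dot F$, the regular drift $\epsilon\nabla_x F\cdot c$, the $X$-quadratic variation $\tfrac{\epsilon}{2}\sigma\sigma^\top:\nabla_x^2 F$, the lower-order measure pieces $\epsilon\int\partial_\mu F[x]\cdot c\,d\mc{L}(X^{\epsilon,\nu}_s,Y^{\epsilon,\nu}_s)$ and $\tfrac{\epsilon}{2}\int\partial_z\partial_\mu F[x]:\sigma\sigma^\top\,d\mc{L}(X^{\epsilon,\nu}_s,Y^{\epsilon,\nu}_s)$, and the boundary contribution $\epsilon(\E[F(t,\cdot)]-\E[F(0,\cdot)])$.

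To close the argument, each of these $O(\epsilon)$ terms must be bounded by $\epsilon C(1+t)\norm{\psi}_{\mc{M}_b^{\tilde{\bm{\zeta}}}([0,T]\times\R^d\times\mc{P}_2(\R^d))}$. The derivatives $\psi,\nabla_x\psi,\nabla_x^2\psi,\partial_\mu\psi,\partial_z\partial_\mu\psi$ together with $\dot\psi$ are controlled by the $\tilde{\bm{\zeta}}$-norm by Definition \ref{def:lionsderivativeclasses}; the coefficients $c$ and $\sigma\sigma^\top$ are bounded by Assumptions \ref{assumption:uniformellipticity} and \ref{assumption:regularityofcoefficientsnew}; and the polynomial-in-$y$ growth of $\Phi,\nabla_x\Phi,\nabla_y\Phi,\nabla_x\nabla_y\Phi,\partial_\mu\Phi,\partial_z\partial_\mu\Phi$ is supplied by the Poisson-equation regularity theory of Appendix \ref{sec:regularityofthecellproblem}, which is applicable thanks to the multi-index $\hat{\bm{\zeta}}_1$ built into Assumption \ref{assumption:regularityofcoefficientsnew}. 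Pairing these growth estimates with the uniform moment bound $\sup_{s\leq T}\E[|Y^{\epsilon,\nu}_s|^{2p}]\leq C(p)+\E[|\zeta|^{2p}]$ from Lemma \ref{lemma:barYuniformbound}, each integrand is dominated by $C(1+|Y^{\epsilon,\nu}_s|^m)\norm{\psi}_{\mc{M}_b^{\tilde{\bm{\zeta}}}}$ for some finite $m$, and integration over $[0,t]$ furnishes the desired $(1+t)$-factor. The principal technical obstacle I anticipate is the careful bookkeeping of polynomial growth exponents of the various Lions-type derivatives of $\Phi$ in $y$ through the inductive Poisson-equation regularity argument; once these are in hand, the remainder of the proof is a straightforward collection of estimates.
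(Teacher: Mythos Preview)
Your proposal is correct and follows essentially the same approach as the paper: apply the McKean--Vlasov It\^o formula (Proposition 5.102 in \cite{CD}) to $\Phi\cdot\psi$ evaluated along $(X^{\epsilon,\nu},Y^{\epsilon,\nu},\mc{L}(X^{\epsilon,\nu}))$, use the cell problem to cancel the $\tfrac{1}{\epsilon}$-singularity, identify the $O(1)$ pieces with the four displayed terms, and bound the remaining $O(\epsilon)$ terms via the polynomial-in-$y$ regularity of $\Phi$ from Proposition \ref{proposition:allneededregularity} together with the moment bound of Lemma \ref{lemma:barYuniformbound}. The only minor imprecision is that $c$ and $\sigma$ are not assumed bounded but merely of polynomial growth in $y$ (Assumption \ref{assumption:regularityofcoefficientsnew}), and you will also need $\nabla_x^2\Phi$ for the $\sigma\sigma^\top:\nabla_x^2 F$ term; both are already covered by the same appendix results and moment bound you invoke.
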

\begin{proof}
First we note that we may apply It\^o's formula for measure-dependent functions (Proposition 5.102 in \cite{CD}, see also \cite{RocknerMcKeanVlasov} Section 4.2 for a similar computation) to $\Phi(X^{\epsilon,\nu}_t,Y^{\epsilon,\nu}_t,\mc{L}(X^{\epsilon,\nu}_t))\cdot\psi(t,X^{\epsilon,\nu}_t,\mc{L}(X^{\epsilon,\nu}_t))$. We have more than enough differentiability of $\Phi$ in $x,y,$ and $\mu$ to apply It\^o's formula as per Proposition \ref{proposition:allneededregularity} (see also Remark \ref{remark:sufficientconditionsforItos}). Thus, we have:
\begin{align*}
&\int_0^t \frac{1}{\epsilon}b(X^{\epsilon,\nu}_s,Y^{\epsilon,\nu}_s,\mc{L}(X^{\epsilon,\nu}_s))\cdot\psi(s,X^{\epsilon,\nu}_s,\mc{L}(X^{\epsilon,\nu}_s))ds - \int_0^t  \gamma_1(X^{\epsilon,\nu}_s,Y^{\epsilon,\nu}_s,\mc{L}(X^{\epsilon,\nu}_s))\cdot\psi(s,X^{\epsilon,\nu}_s,\mc{L}(X^{\epsilon,\nu}_s))\\
&+2D_1(X^{\epsilon,\nu}_s,Y^{\epsilon,\nu}_s,\mc{L}(X^{\epsilon,\nu}_s)):\partial_x\psi(s,X^{\epsilon,\nu}_s,\mc{L}(X^{\epsilon,\nu}_s))ds\\
&-\int_0^t \int_{\R^{2d}}\psi^\top(s,X^{\epsilon,\nu}_s,\mc{L}(X^{\epsilon,\nu}_s))\partial_\mu \Phi(X^{\epsilon,\nu}_s,Y^{\epsilon,\nu}_s,\mc{L}(X^{\epsilon,\nu}_s))[x]b(x,y,\mc{L}(X^{\epsilon,\nu}_s))\mc{L}(X^{\epsilon,\nu}_s,Y^{\epsilon,\nu}_s)(dx,dy)ds\\
&-\int_0^t \int_{\R^{2d}}\Phi^\top(X^{\epsilon,\nu}_s,Y^{\epsilon,\nu}_s,\mc{L}(X^{\epsilon,\nu}_s))\partial_\mu \psi(s,X^{\epsilon,\nu}_s,\mc{L}(X^{\epsilon,\nu}_s))[x]b(x,y,\mc{L}(X^{\epsilon,\nu}_s))\mc{L}(X^{\epsilon,\nu}_s,Y^{\epsilon,\nu}_s)(dx,dy)ds \\
&= \sum_{k=1}^5 A^\epsilon_k(t),
\end{align*}
where
\begin{align*}
A^\epsilon_1(t)& = \epsilon\biggl[\Phi(X^{\epsilon,\nu}_0,Y^{\epsilon,\nu}_0,\mc{L}(X^{\epsilon,\nu}_0))\cdot\psi(0,X^{\epsilon,\nu}_0,\mc{L}(X^{\epsilon,\nu}_0))-\Phi(X^{\epsilon,\nu}_t,Y^{\epsilon,\nu}_t,\mc{L}(X^{\epsilon,\nu}_t))\cdot\psi(t,X^{\epsilon,\nu}_t,\mc{L}(X^{\epsilon,\nu}_t)) \biggr]\\
A^\epsilon_2(t)& = \epsilon \int_0^t \Phi(X^{\epsilon,\nu}_s,Y^{\epsilon,\nu}_s,\mc{L}(X^{\epsilon,\nu}_s))\cdot\dot{\psi}(s,X^{\epsilon,\nu}_s,\mc{L}(X^{\epsilon,\nu}_s))\\
&+\psi^\top(s,X^{\epsilon,\nu}_s,\mc{L}(X^{\epsilon,\nu}_s))\partial_x\Phi(X^{\epsilon,\nu}_s,Y^{\epsilon,\nu}_s,\mc{L}(X^{\epsilon,\nu}_s))c(X^{\epsilon,\nu}_s,Y^{\epsilon,\nu}_s,\mc{L}(X^{\epsilon,\nu}_s))\\
&+\Phi^\top(X^{\epsilon,\nu}_s,Y^{\epsilon,\nu}_s,\mc{L}(X^{\epsilon,\nu}_s))\partial_x\psi(s,X^{\epsilon,\nu}_s,\mc{L}(X^{\epsilon,\nu}_s)) c(X^{\epsilon,\nu}_s,Y^{\epsilon,\nu}_s,\mc{L}(X^{\epsilon,\nu}_s)) \\
&+\frac{1}{2}\biggl[\psi^\top(s,X^{\epsilon,\nu}_s,\mc{L}(X^{\epsilon,\nu}_s))\sigma\sigma^\top(X^{\epsilon,\nu}_s,Y^{\epsilon,\nu}_s,\mc{L}(X^{\epsilon,\nu}_s):\partial^2_x\Phi(X^{\epsilon,\nu}_s,Y^{\epsilon,\nu}_s,\mc{L}(X^{\epsilon,\nu}_s) \\
&+2\sigma\sigma^\top(X^{\epsilon,\nu}_s,Y^{\epsilon,\nu}_s,\mc{L}(X^{\epsilon,\nu}_s): [(\partial_x\Phi)^\top(X^{\epsilon,\nu}_s,Y^{\epsilon,\nu}_s,\mc{L}(X^{\epsilon,\nu}_s))\partial_x\psi(s,X^{\epsilon,\nu}_s,\mc{L}(X^{\epsilon,\nu}_s))]\\
&+\Phi^\top(X^{\epsilon,\nu}_s,Y^{\epsilon,\nu}_s,\mc{L}(X^{\epsilon,\nu}_s))\sigma\sigma^\top (X^{\epsilon,\nu}_s,Y^{\epsilon,\nu}_s,\mc{L}(X^{\epsilon,\nu}_s)):\partial^2_x\psi(s,X^{\epsilon,\nu}_s,\mc{L}(X^{\epsilon,\nu}_s)) \biggr]ds\\
A^\epsilon_3(t)& = \epsilon \int_0^t \int_{\R^{2d}}\psi^\top(s,X^{\epsilon,\nu}_s,\mc{L}(X^{\epsilon,\nu}_s)) \partial_\mu \Phi(X^{\epsilon,\nu}_s,Y^{\epsilon,\nu}_s,\mc{L}(X^{\epsilon,\nu}_s))[x]c(x,y,\mc{L}(X^{\epsilon,\nu}_s))\\
&+\Phi^\top(X^{\epsilon,\nu}_s,Y^{\epsilon,\nu}_s,\mc{L}(X^{\epsilon,\nu}_s))\partial_\mu\psi(s,X^{\epsilon,\nu}_s,\mc{L}(X^{\epsilon,\nu}_s))[x] c(x,y,\mc{L}(X^{\epsilon,\nu}_s)) \\
&+  \frac{1}{2}\biggl[\psi^\top(s,X^{\epsilon,\nu}_s,\mc{L}(X^{\epsilon,\nu}_s))\sigma\sigma^\top(x,y,\mc{L}(X^{\epsilon,\nu}_s)):\partial_z\partial_\mu \Phi(X^{\epsilon,\nu}_s,Y^{\epsilon,\nu}_s,\mc{L}(X^{\epsilon,\nu}_s))[x]\\
&+\Phi^\top(X^{\epsilon,\nu}_s,Y^{\epsilon,\nu}_s,\mc{L}(X^{\epsilon,\nu}_s))\sigma\sigma^\top(x,y,\mc{L}(X^{\epsilon,\nu}_s)):\partial_z\partial_\mu\psi(s,X^{\epsilon,\nu}_s,\mc{L}(X^{\epsilon,\nu}_s))[x]\biggr]\mc{L}(X^{\epsilon,\nu}_s,Y^{\epsilon,\nu}_s)(dx,dy)ds\\
A^\epsilon_4(t)& = \epsilon \int_0^t \biggl[\psi^\top(s,X^{\epsilon,\nu}_s,\mc{L}(X^{\epsilon,\nu}_s))\partial_x\Phi(X^{\epsilon,\nu}_s,Y^{\epsilon,\nu}_s,\mc{L}(X^{\epsilon,\nu}_s))+\Phi^\top(X^{\epsilon,\nu}_s,Y^{\epsilon,\nu}_s,\mc{L}(X^{\epsilon,\nu}_s))\partial_x\psi(s,X^{\epsilon,\nu}_s,\mc{L}(X^{\epsilon,\nu}_s)) \biggr]\\
&\hspace{8cm}\sigma(X^{\epsilon,\nu}_s,Y^{\epsilon,\nu}_s,\mc{L}(X^{\epsilon,\nu}_s))dW_s\\
&+\int_0^t\psi^\top(s,X^{\epsilon,\nu}_s,\mc{L}(X^{\epsilon,\nu}_s))\partial_y\Phi(X^{\epsilon,\nu}_s,Y^{\epsilon,\nu}_s,\mc{L}(X^{\epsilon,\nu}_s))\tau_1(X^{\epsilon,\nu}_s,Y^{\epsilon,\nu}_s,\mc{L}(X^{\epsilon,\nu}_s))dW_s \\
&+ \int_0^t \psi^\top(s,X^{\epsilon,\nu}_s,\mc{L}(X^{\epsilon,\nu}_s)) \partial_y\Phi(X^{\epsilon,\nu}_s,Y^{\epsilon,\nu}_s,\mc{L}(X^{\epsilon,\nu}_s))\tau_2(X^{\epsilon,\nu}_s,Y^{\epsilon,\nu}_s,\mc{L}(X^{\epsilon,\nu}_s))dB_s.\\
\end{align*}
Via the same Proposition \ref{proposition:allneededregularity} and Assumption \ref{assumption:regularityofcoefficientsnew}, all the coefficients, as well as $\Phi$ and its derivatives, which appear in $A^\epsilon_k,k=1,2,3,4$ grow at most polynomially in $y$ uniformly in their other arguments.
Thus, by Lemma \ref{lemma:barYuniformbound}, the martingale terms collected in $A^\epsilon_4(t)$ vanish in expectation, and there exists $m\in\bb{N}$ such that:\begin{align*}
&\biggl|\E\biggl[A^\epsilon_1(t)\biggr]\biggr|\leq\E\biggl[|A^\epsilon_1(t)|\biggr]\\
&\quad\leq \epsilon \sup_{t\in[0,T]}\E\biggl[|\Phi(X^{\epsilon,\nu}_t,Y^{\epsilon,\nu}_t,\mc{L}(X^{\epsilon,\nu}_t))|\biggr]\norm{\psi}_\infty \\
&\quad\leq \epsilon C\biggl(1+\sup_{t\in[0,T]}\E\biggl[|Y^{\epsilon,\nu}_t|^{2m}\biggr]\biggr)\norm{\psi}_\infty \\
&\quad\leq \epsilon C \norm{\psi}_\infty\\
&\biggl|\E\biggl[A^\epsilon_2(t)\biggr]\biggr|\leq \E\biggl[|A^\epsilon_2(t)|\biggr]\\
&\quad\leq \epsilon C t \sup_{s\in [0,T]}\E\biggl[|\Phi(X^{\epsilon,\nu}_s,Y^{\epsilon,\nu}_s,\mc{L}(X^{\epsilon,\nu}_s))|+|\partial_x\Phi(X^{\epsilon,\nu}_s,Y^{\epsilon,\nu}_s,\mc{L}(X^{\epsilon,\nu}_s))||c(X^{\epsilon,\nu}_s,Y^{\epsilon,\nu}_s,\mc{L}(X^{\epsilon,\nu}_s))|\\
&\quad+|\Phi(X^{\epsilon,\nu}_s,Y^{\epsilon,\nu}_s,\mc{L}(X^{\epsilon,\nu}_s))||c(X^{\epsilon,\nu}_s,Y^{\epsilon,\nu}_s,\mc{L}(X^{\epsilon,\nu}_s))| \\
&\quad+|\sigma\sigma^\top(X^{\epsilon,\nu}_s,Y^{\epsilon,\nu}_s,\mc{L}(X^{\epsilon,\nu}_s)||\partial^2_x\Phi(X^{\epsilon,\nu}_s,Y^{\epsilon,\nu}_s,\mc{L}(X^{\epsilon,\nu}_s)| \\
&\quad+|\sigma\sigma^\top(X^{\epsilon,\nu}_s,Y^{\epsilon,\nu}_s,\mc{L}(X^{\epsilon,\nu}_s)|| \partial_x\Phi(X^{\epsilon,\nu}_s,Y^{\epsilon,\nu}_s,\mc{L}(X^{\epsilon,\nu}_s))|\\
&\quad+|\Phi(X^{\epsilon,\nu}_s,Y^{\epsilon,\nu}_s,\mc{L}(X^{\epsilon,\nu}_s))||\sigma\sigma^\top (X^{\epsilon,\nu}_s,Y^{\epsilon,\nu}_s,\mc{L}(X^{\epsilon,\nu}_s))| \biggr]\sup_{\mu\in\mc{P}_2(\R^d)}\norm{\psi(\cdot,\cdot,\mu)}_{C^{1,2}_b([0,T];\R^d)}\\
&\quad\leq  \epsilon C t\biggl(1+\sup_{t\in[0,T]}\E\biggl[|Y^{\epsilon,\nu}_t|^{2m}\biggr]\biggr)\sup_{\mu\in\mc{P}_2(\R^d)}\norm{\psi(\cdot,\cdot,\mu)}_{C^{1,2}_b([0,T];\R^d)}\\
&\quad\leq \epsilon C t\sup_{\mu\in\mc{P}_2(\R^d)}\norm{\psi(\cdot,\cdot,\mu)}_{C^{1,2}_b([0,T];\R^d)}\\
&\biggl|\E\biggl[A^\epsilon_3(t)\biggr]\biggr|\leq \E\biggl[|A^\epsilon_3(t)|\biggr]\\
&\quad\leq \epsilon C t\sup_{s\in [0,T]}\E\biggl[\int_{\R^{2d}} |\partial_\mu \Phi(X^{\epsilon,\nu}_s,Y^{\epsilon,\nu}_s,\mc{L}(X^{\epsilon,\nu}_s))[x]||c(x,y,\mc{L}(X^{\epsilon,\nu}_s))|\\
&\quad+|\Phi(X^{\epsilon,\nu}_s,Y^{\epsilon,\nu}_s,\mc{L}(X^{\epsilon,\nu}_s))||c(x,y,\mc{L}(X^{\epsilon,\nu}_s))| \\
&\quad+  |\sigma\sigma^\top(x,y,\mc{L}(X^{\epsilon,\nu}_s))||\partial_z\partial_\mu \Phi(X^{\epsilon,\nu}_s,Y^{\epsilon,\nu}_s,\mc{L}(X^{\epsilon,\nu}_s))[x]|\\
&\quad+|\Phi(X^{\epsilon,\nu}_s,Y^{\epsilon,\nu}_s,\mc{L}(X^{\epsilon,\nu}_s))||\sigma\sigma^\top(x,y,\mc{L}(X^{\epsilon,\nu}_s))|\mc{L}(X^{\epsilon,\nu}_s,Y^{\epsilon,\nu}_s)(dx,dy)\biggr](\norm{\psi}_\infty+\norm{\partial_\mu\psi}_\infty+\norm{\partial_z\partial_\mu\psi}_\infty)\\
&\quad\leq \epsilon C t\sup_{s\in [0,T]}\biggl\lbrace\E\biggl[\sup_{x\in\R^d}|\partial_\mu \Phi(X^{\epsilon,\nu}_s,Y^{\epsilon,\nu}_s,\mc{L}(X^{\epsilon,\nu}_s))[x]|^2+|\Phi(X^{\epsilon,\nu}_s,Y^{\epsilon,\nu}_s,\mc{L}(X^{\epsilon,\nu}_s))|^2 \\
&\hspace{8cm}+ \sup_{x\in\R^d}|\partial_z\partial_\mu \Phi(X^{\epsilon,\nu}_s,Y^{\epsilon,\nu}_s,\mc{L}(X^{\epsilon,\nu}_s))[x]|^2\biggr]\\
&\quad+\int_{\R^{2d}}|c(x,y,\mc{L}(X^{\epsilon,\nu}_s))|^2+|\sigma\sigma^\top(x,y,\mc{L}(X^{\epsilon,\nu}_s))|^2 \mc{L}(X^{\epsilon,\nu}_s,Y^{\epsilon,\nu}_s)(dx,dy)\biggr\rbrace(\norm{\psi}_\infty+\norm{\partial_\mu\psi}_\infty+\norm{\partial_z\partial_\mu\psi}_\infty)\\
&\quad\leq \epsilon C t\biggl(1+\sup_{s\in [0,T]}\biggl\lbrace\E\biggl[|Y^{\epsilon,\nu}_s|^{2m}\biggr]+\int_{\R^{2d}}|y|^{2m} \mc{L}(X^{\epsilon,\nu}_s,Y^{\epsilon,\nu}_s)(dx,dy)\biggr\rbrace\biggr)(\norm{\psi}_\infty+\norm{\partial_\mu\psi}_\infty+\norm{\partial_z\partial_\mu\psi}_\infty)\\
& \quad= \epsilon C t\biggl(1+\sup_{s\in [0,T]}\E\biggl[|Y^{\epsilon,\nu}_s|^{2m}\biggr]\biggr)(\norm{\psi}_\infty+\norm{\partial_\mu\psi}_\infty+\norm{\partial_z\partial_\mu\psi}_\infty)\\
&\quad\leq \epsilon C t(\norm{\psi}_\infty+\norm{\partial_\mu\psi}_\infty+\norm{\partial_z\partial_\mu\psi}_\infty).
\end{align*}
Combining the above bounds, we get the desired result.
\end{proof}

\begin{proposition}\label{prop:purpleterm2}
In the setup of Proposition \ref{prop:fluctuationestimateparticles2}, for any $t\in[0,T]$ and $\epsilon\in (0,1]$:
\begin{align*}
&\biggl|\E\biggl[\int_0^t  \int_{\R^{2d}}\psi^\top(s,X^{\epsilon,\nu}_s,\mc{L}(X^{\epsilon,\nu}_s))\partial_\mu \Phi(X^{\epsilon,\nu}_s,Y^{\epsilon,\nu}_s,\mc{L}(X^{\epsilon,\nu}_s))[x]b(x,y,\mc{L}(X^{\epsilon,\nu}_s))\mc{L}(X^{\epsilon,\nu}_s,Y^{\epsilon,\nu}_s)(dx,dy)ds\biggr]\biggr|\\
&\leq C\epsilon(1+t)\norm{\psi}_{\mc{M}_b^{\tilde{\bm{\zeta}}}([0,T]\times\R^d\times\mc{P}_2(\R^d);\R^d)}.
\end{align*}

\end{proposition}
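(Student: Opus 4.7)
The strategy is to exploit the centering condition \ref{assumption:centeringcondition} for $b$ via a coupling argument combined with a doubled Poisson equation. On an auxiliary probability space, introduce an independent copy $(\tilde X^{\epsilon,\nu}, \tilde Y^{\epsilon,\nu})$ of $(X^{\epsilon,\nu}, Y^{\epsilon,\nu})$ driven by independent Brownian motions $\tilde W, \tilde B$, sharing the same flow of marginal laws $\mc{L}(X^{\epsilon,\nu}_\cdot)$ in its coefficients, and let $\tilde\E$ denote expectation over this copy. Then the inner integrand in the statement rewrites as
\begin{align*}
\int_{\R^{2d}} \psi^\top \partial_\mu \Phi(X^{\epsilon,\nu}_s, Y^{\epsilon,\nu}_s, \mc{L}(X^{\epsilon,\nu}_s))[x] b(x, y, \mc{L}(X^{\epsilon,\nu}_s)) \, \mc{L}(X^{\epsilon,\nu}_s, Y^{\epsilon,\nu}_s)(dx, dy) = \tilde\E\bigl[ H^\epsilon_s \bigr],
\end{align*}
where $H^\epsilon_s := \psi^\top(s, X^{\epsilon,\nu}_s, \mc{L}(X^{\epsilon,\nu}_s)) \partial_\mu\Phi(X^{\epsilon,\nu}_s, Y^{\epsilon,\nu}_s, \mc{L}(X^{\epsilon,\nu}_s))[\tilde X^{\epsilon,\nu}_s] b(\tilde X^{\epsilon,\nu}_s, \tilde Y^{\epsilon,\nu}_s, \mc{L}(X^{\epsilon,\nu}_s))$.

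Next, introduce the doubled operator $\tilde{\mc{L}}_{x_1, x_2, \mu} := \mc{L}_{x_1, \mu}^{(y_1)} + \mc{L}_{x_2, \mu}^{(y_2)}$ acting on smooth functions of $(y_1, y_2) \in \R^{2d}$, with parameters $x_1, x_2 \in \R^d$ and $\mu \in \mc{P}_2(\R^d)$. Under Assumptions \ref{assumption:uniformellipticity}, \ref{assumption:retractiontomean} this operator is uniformly elliptic and dissipative with invariant measure $\pi(dy_1; x_1, \mu) \otimes \pi(dy_2; x_2, \mu)$. By the centering condition \ref{assumption:centeringcondition}, the right-hand side
\begin{align*}
(y_1, y_2) \mapsto \psi^\top(s, x_1, \mu) \partial_\mu\Phi(x_1, y_1, \mu)[x_2] \, b(x_2, y_2, \mu)
\end{align*}
is centered against this product measure, so the doubled Poisson equation \eqref{eq:doublecorrectorproblem}
\begin{align*}
\tilde{\mc{L}}_{x_1, x_2, \mu} \tilde\chi(s, x_1, y_1, x_2, y_2, \mu) = -\psi^\top(s, x_1, \mu) \partial_\mu\Phi(x_1, y_1, \mu)[x_2] \, b(x_2, y_2, \mu)
\end{align*}
admits a unique zero-mean solution $\tilde\chi$. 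Standard regularity theory for Poisson equations (as in \cite{PV1} together with Appendix \ref{sec:regularityofthecellproblem}) ensures that $\tilde\chi$ and the mixed derivatives of $\tilde\chi$ in $(x_1, x_2, \mu, z)$ which enter below have polynomial growth in $(y_1, y_2)$ uniformly in the remaining arguments, with norms linearly controlled by $\norm{\psi}_{\mc{M}_b^{\tilde{\bm{\zeta}}}([0,T]\times\R^d\times\mc{P}_2(\R^d))}$.

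Now apply It\^o's formula for measure-dependent functionals (Proposition 5.102 in \cite{CD}) to $\epsilon^2 \tilde\chi(s, X^{\epsilon,\nu}_s, Y^{\epsilon,\nu}_s, \tilde X^{\epsilon,\nu}_s, \tilde Y^{\epsilon,\nu}_s, \mc{L}(X^{\epsilon,\nu}_s))$ on the product probability space. By independence of the driving Brownian motions, the fast piece of the joint generator reduces to $\epsilon^{-2} \tilde{\mc{L}}_{X^{\epsilon,\nu}_s, \tilde X^{\epsilon,\nu}_s, \mc{L}(X^{\epsilon,\nu}_s)} \tilde\chi$, which by construction of $\tilde\chi$ equals $-\epsilon^{-2} H^\epsilon_s$. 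Multiplying by $\epsilon^2$ and rearranging, $\E\tilde\E[\int_0^t H^\epsilon_s ds]$ equals a boundary term of order $\epsilon^2$, an order-$\epsilon$ integral arising from the $\epsilon^{-1}$ drift contributions ($g$ and mixed first-derivative terms), an order-$\epsilon^2$ integral from $O(1)$ contributions ($\dot\psi$, slow drifts, second-order diffusion terms, and the Lions-derivative contributions stemming from the $\mc{L}(X^{\epsilon,\nu})$-dependence of $\tilde\chi$), plus a mean-zero martingale. Lemma \ref{lemma:barYuniformbound}, applied separately to $Y^{\epsilon,\nu}$ and $\tilde Y^{\epsilon,\nu}$, converts the polynomial-growth bounds on $\tilde\chi$ and its derivatives into uniform-in-$\epsilon$ moment estimates, yielding the total bound $C\epsilon(1+t)\norm{\psi}_{\mc{M}_b^{\tilde{\bm{\zeta}}}([0,T]\times\R^d\times\mc{P}_2(\R^d))}$.

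The main obstacle will be the careful bookkeeping required to verify the regularity of the doubled corrector $\tilde\chi$ and to track polynomial-growth-in-$(y_1, y_2)$ bounds for all of its mixed derivatives in $(x_1, x_2, \mu, z)$ appearing in the It\^o expansion. In particular, the Lions-derivative contributions coming from the $\mc{L}(X^{\epsilon,\nu})$-dependence through $\partial_\mu\Phi$ produce measure derivatives of $\Phi$ up to third order, which must be controlled using the regularity furnished by Assumption \ref{assumption:regularityofcoefficientsnew} and Proposition \ref{proposition:allneededregularity}. A secondary subtlety is ensuring that the nominally $O(\epsilon^{-1})$ drift pieces of the joint generator do not inflate the final estimate beyond order $\epsilon(1+t)$; this is where the fact that these pieces multiply $\tilde\chi$ (rather than its faster-blowing derivatives) together with the moment bounds of Lemma \ref{lemma:barYuniformbound} are essential.
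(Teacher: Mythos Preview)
Your proposal is correct and follows essentially the same approach as the paper: introduce an independent copy, rewrite the inner integral as an expectation over that copy, solve a doubled Poisson equation for the operator $\mc{L}^2_{x,\bar x,\mu}=\mc{L}_{x,\mu}^{(y)}+\mc{L}_{\bar x,\mu}^{(\bar y)}$ with product invariant measure $\pi\otimes\pi$ (centering coming from \ref{assumption:centeringcondition}), apply the measure-dependent It\^o formula, and bound the resulting $O(\epsilon)$ and $O(\epsilon^2)$ terms via Lemma~\ref{lemma:barYuniformbound}. The only cosmetic difference is that the paper keeps $\psi$ outside the corrector (defining a vector-valued $\chi$ solving \eqref{eq:doublecorrectorproblem} and applying It\^o to the product $\chi\cdot\psi$), whereas you fold $\psi$ into the right-hand side; since $\psi$ is constant in $(y_1,y_2)$ these are equivalent, and the paper's packaging has the minor advantage that the regularity of $\chi$ (Proposition~\ref{proposition:allneededregularity}) is established once, independently of $\psi$. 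Note also that only $\partial_\mu\chi$ and $\partial_z\partial_\mu\chi$ enter the It\^o expansion, so only second (not third) Lions derivatives of $\Phi$ are actually needed, consistent with $\chi\in\mc{M}_p^{\hat{\bm{\zeta}}_2}$.
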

\begin{proof}
To see this, we note for all $t\in [0,T]$:
\begin{align*}
&\E\biggl[\int_0^t \int_{\R^{2d}} \psi^\top(s,X^{\epsilon,\nu}_s,\mc{L}(X^{\epsilon,\nu}_s))\partial_{\mu}\Phi(X^{\epsilon,\nu}_s,Y^{\epsilon,\nu}_s,\mc{L}(X^{\epsilon,\nu}_s))[x]b(x,y,\mc{L}(X^{\epsilon,\nu}_s))\mc{L}(X^{\epsilon,\nu}_s,Y^{\epsilon,\nu}_s)(dx,dy)ds \biggr]\\
&=  \E\biggl[ \int_0^t \psi^\top(s,X^{\epsilon,\nu}_s,\mc{L}(X^{\epsilon,\nu}_s))\partial_{\mu}\Phi(X^{\epsilon,\nu}_s,Y^{\epsilon,\nu}_s,\mc{L}(X^{\epsilon,\nu}_s))[\hat{X}^{\epsilon}_s]b(\hat{X}^{\epsilon}_s,\hat{Y}^{\epsilon}_s,\mc{L}(X^{\epsilon,\nu}_s)) ds \biggr]
\end{align*}
where $(\hat{X}^{\epsilon},\hat{Y}^{\epsilon})$ is an independent copy of $(X^{\epsilon,\nu},Y^{\epsilon,\nu})$. Thus it is enough to show that the expectation of $\int_0^t \psi^\top(s,X^{\epsilon,\nu}_s,\mc{L}(X^{\epsilon,\nu}_s))\partial_{\mu}\Phi(X^{\epsilon,\nu}_s,Y^{\epsilon,\nu}_s,\mc{L}(X^{\epsilon,\nu}_s))[\hat{X}^{\epsilon}_s] b(\hat{X}^{\epsilon}_s,\hat{Y}^{\epsilon}_s,\mc{L}(X^{\epsilon,\nu}_s)) ds$ is $O(\epsilon)$, where
\begin{align*}
dX^{\epsilon,\nu}_t &= \biggl[\frac{1}{\epsilon}b(X^{\epsilon,\nu}_t,Y^{\epsilon,\nu}_t,\mc{L}(X^{\epsilon,\nu}_t))+ c(X^{\epsilon,\nu}_t,Y^{\epsilon,\nu}_t,\mc{L}(X^{\epsilon,\nu}_t)) \biggr]dt + \sigma(X^{\epsilon,\nu}_t,Y^{\epsilon,\nu}_t,\mc{L}(X^{\epsilon,\nu}_t))dW_t\\
dY^{\epsilon,\nu}_t & = \frac{1}{\epsilon}\biggl[\frac{1}{\epsilon}f(X^{\epsilon,\nu}_t,Y^{\epsilon,\nu}_t,\mc{L}(X^{\epsilon,\nu}_t))+ g(X^{\epsilon,\nu}_t,Y^{\epsilon,\nu}_t,\mc{L}(X^{\epsilon,\nu}_t)) \biggr]dt \nonumber\\
&+ \frac{1}{\epsilon}\biggl[\tau_1(X^{\epsilon,\nu}_t,Y^{\epsilon,\nu}_t,\mc{L}(X^{\epsilon,\nu}_t))dW_t+\tau_2(X^{\epsilon,\nu}_t,Y^{\epsilon,\nu}_t,\mc{L}(X^{\epsilon,\nu}_t))dB_t\biggr]\\
d\hat{X}^{\epsilon}_t &= \biggl[\frac{1}{\epsilon}b(\hat{X}^{\epsilon}_t,\hat{Y}^{\epsilon}_t,\mc{L}(X^{\epsilon,\nu}_t))+ c(\hat{X}^{\epsilon}_t,\hat{Y}^{\epsilon}_t,\mc{L}(X^{\epsilon,\nu}_t)) \biggr]dt + \sigma(\hat{X}^{\epsilon}_t,\hat{Y}^{\epsilon}_t,\mc{L}(X^{\epsilon,\nu}_t))d\hat{W}_t\\
d\hat{Y}^{\epsilon}_t & = \frac{1}{\epsilon}\biggl[\frac{1}{\epsilon}f(\hat{X}^{\epsilon}_t,\hat{Y}^{\epsilon}_t,\mc{L}(X^{\epsilon,\nu}_t))+ g(\hat{X}^{\epsilon}_t,\hat{Y}^{\epsilon}_t,\mc{L}(X^{\epsilon,\nu}_t)) \biggr]dt \nonumber\\
&+ \frac{1}{\epsilon}\biggl[\tau_1(\hat{X}^{\epsilon}_t,\hat{Y}^{\epsilon}_t,\mc{L}(X^{\epsilon,\nu}_t))d\hat{W}_t+\tau_2(\hat{X}^{\epsilon}_t,\hat{Y}^{\epsilon}_t,\mc{L}(X^{\epsilon,\nu}_t))d\hat{B}_t\biggr]\\
(X^{\epsilon,\nu}_0,Y^{\epsilon,\nu}_0,\hat{X}^\epsilon_0,\hat{Y}^\epsilon_0)&=(\eta,\zeta,\hat{\eta},\hat{\zeta})
\end{align*}
and $W,B,\hat{W},\hat{B}$ are independent $m-$ dimensional standard Brownian motions, $(\eta,\zeta)$ and $(\hat{\eta},\hat{\zeta})$ are IID with $(\eta,\zeta)$ from Equation \eqref{eq:slow-fastMcKeanVlasov}, and $(\eta,\zeta,\hat{\eta},\hat{\zeta})$ is independent of $(W,B,\hat{W},\hat{B})$.

Recall the operator $\mc{L}_{x,\mu}$ from Equation \eqref{eq:frozengeneratormold}. For fixed $x\in\R,\mu\in\mc{P}(\R)$, this is the generator of the frozen process
\begin{align}\label{eq:frozenprocess1}
dY^{x,\mu}_t = f(x,Y^{x,\mu}_t,\mu)dt+\tau_1(x,Y^{x,\mu}_t,\mu)dW_t+\tau_2(x,Y^{x,\mu}_t,\mu)dB_t
\end{align}
for $W_t,B_t$ independent $m-$ dimensional standard Brownian motions.
We now introduce a new generator $\mc{L}^2_{x,\bar{x},\mu}$ parameterized by $x,\bar{x}\in\R^d,\mu\in\mc{P}_2(\R^d)$ which acts on $\psi\in C^2_b(\R^2)$ by
\begin{align}\label{eq:2copiesgenerator}
\mc{L}^2_{x,\bar{x},\mu}\psi(y,\bar{y}) &= f(x,y,\mu)\cdot\partial_y\psi(y,\bar{y})+f(\bar{x},\bar{y},\mu)\cdot\partial_{\bar{y}}\psi(y,\bar{y})\\
&+ a(x,y,\mu):\partial^2_y\psi(y,\bar{y})+a(\bar{x},\bar{y},\mu):\partial^2_{\bar{y}}\psi_{\bar{y}\bar{y}}(y,\bar{y}).\nonumber
\end{align}

This is the generator associated to the 2-dimensional process solving 2 independent copies of Equation \eqref{eq:frozenprocess1} where the same parameter $\mu$ enters both equations, but different $x,\bar{x}$ enter each equation, i.e.
\begin{align}\label{eq:frozenprocess2}
dY^{x,\mu}_t &= f(x,Y^{x,\mu}_t,\mu)dt+\tau_1(x,Y^{x,\mu}_t,\mu)dW_t+\tau_2(x,Y^{x,\mu}_t,\mu)dB_t\\
d\bar{Y}^{\bar{x},\mu}_t &= f(\bar{x},\bar{Y}^{\bar{x},\mu}_t,\mu)dt+\tau_1(\bar{x},\bar{Y}^{\bar{x},\mu}_t,\mu)d\bar{W}_t+\tau_2(\bar{x},\bar{Y}^{\bar{x},\mu}_t,\mu)d\bar{B}_t\nonumber.
\end{align}
for $W_t,B_t,\bar{W}_t,\bar{B}_t$ independent $m-$ dimensional standard Brownian motions.

One can see that the unique distributional solution of the adjoint equation
\begin{align}
(\mc{L}^2_{x,\bar{x},\mu})^*\bar{\pi}(\cdot;x,\bar{x},\mu) &=0\nonumber\\
\int_{\R^2}\bar{\pi}(dy,d\bar{y};x,\bar{x},\mu)&=1,\forall x,\bar{x}\in\R,\mu\in\mc{P}(\R) \nonumber
\end{align}
 is given by
\begin{align}\label{eq:doublefrozeninvariantmeasure}
\bar{\pi}(dy,d\bar{y};x,\bar{x},\mu) = \pi(dy;x,\mu)\otimes\pi(d\bar{y};\bar{x},\mu)
\end{align}
where $\pi$ is as in Equation \eqref{eq:invariantmeasureold}.

We now consider $\chi(x,\bar{x},y,\bar{y},\mu):\R^d\times\R^d\times\R^d\times\R^d\times\mc{P}(\R^d)\tto \R^d$ solving
\begin{align}\label{eq:doublecorrectorproblem}
\mc{L}^2_{x,\bar{x},\mu}\chi_l(x,\bar{x},y,\bar{y},\mu) &= -[\partial_\mu \Phi(\bar{x},\bar{y},\mu)[x]b(x,y,\mu)]_l,l=1,...,d\\
\int_{\R^d}\int_{\R^d}\chi(x,\bar{x},y,\bar{y},\mu)\pi(dy;x,\mu)\pi(d\bar{y},\bar{x},\mu)&=0.\nonumber
\end{align}
Note that by the centering condition, Equation \eqref{eq:centeringconditionold}, the right hand side of Equation \eqref{eq:doublecorrectorproblem} integrates against $\bar{\pi}$ from Equation \eqref{eq:doublefrozeninvariantmeasure} to $0$ for all $x,\bar{x},\mu$.
By Proposition \ref{proposition:allneededregularity} and Lemma \ref{lemma:Ganguly1DCellProblemResult}/Remark \ref{remark:regularityfordoubledequations} we have existence and uniqueness for the solution $\chi$ to Equation \eqref{eq:doublecorrectorproblem}, and moreover $\chi$ enjoys more than enough regularity in $x,\bar{x},y,\bar{y},$ and $\mu$ to apply It\^o's formula for measure dependent functions to $\chi(\hat{X}^\epsilon_t,X^{\epsilon,\nu}_t,\hat{Y}^\epsilon_t,Y^{\epsilon,\nu}_t,\mc{L}(X^{\epsilon,\nu}_t))\cdot\psi(t,X^{\epsilon,\nu}_t,\mc{L}(X^{\epsilon,\nu}_t))$. Thus:
\begin{align*}
\int_0^t \psi^\top(s,X^{\epsilon,\nu}_s,\mc{L}(X^{\epsilon,\nu}_s))\partial_{\mu}\Phi(X^{\epsilon,\nu}_s,Y^{\epsilon,\nu}_s,\mc{L}(X^{\epsilon,\nu}_s))[\hat{X}^{\epsilon}_s]b(\hat{X}^{\epsilon}_s,\hat{Y}^{\epsilon}_s,\mc{L}(X^{\epsilon,\nu}_s)) ds& = \sum_{k=1}^6B^{\epsilon}(t)\\
\end{align*}
where
\begin{align*}
B_1^\epsilon(t)& =  \epsilon^2\biggl[\chi(\hat{X}^\epsilon_0,X^{\epsilon,\nu}_0,\hat{Y}^\epsilon_0,Y^{\epsilon,\nu}_0,\mc{L}(X^{\epsilon,\nu}_0))\cdot\psi(0,X^{\epsilon,\nu}_0,\mc{L}(X^{\epsilon,\nu}_0))-\chi(\hat{X}^\epsilon_t,X^{\epsilon,\nu}_t,\hat{Y}^\epsilon_t,Y^{\epsilon,\nu}_t,\mc{L}(X^{\epsilon,\nu}_t))\cdot\psi(t,X^{\epsilon,\nu}_t,\mc{L}(X^{\epsilon,\nu}_t)) \biggr] \\
B_2^\epsilon(t)&= \epsilon\int_0^t \psi^\top(s,X^{\epsilon,\nu}_s,\mc{L}(X^{\epsilon,\nu}_s))\partial_x \chi\hat{b} +\psi^\top(s,X^{\epsilon,\nu}_s,\mc{L}(X^{\epsilon,\nu}_s))\partial_{\bar{x}} \chi\tilde{b}+\chi^\top\partial_x\psi(s,X^{\epsilon,\nu}_s,\mc{L}(X^{\epsilon,\nu}_s))\tilde{b} \\
&+ \psi^\top(s,X^{\epsilon,\nu}_s,\mc{L}(X^{\epsilon,\nu}_s))\partial_y \chi\hat{g} + \psi^\top(s,X^{\epsilon,\nu}_s,\mc{L}(X^{\epsilon,\nu}_s))\partial_{\bar{y}} \chi\tilde{g}+ \psi^\top(s,X^{\epsilon,\nu}_s,\mc{L}(X^{\epsilon,\nu}_s))\hat{\tau}_1\hat{\sigma}^\top :\partial_x\partial_y \chi \\
&+ \psi^\top(s,X^{\epsilon,\nu}_s,\mc{L}(X^{\epsilon,\nu}_s))\tilde{\tau}_1\tilde{\sigma}^\top:\partial_{\bar{x}}\partial_{\bar{y}} \chi+\tilde{\tau}_1\tilde{\sigma}^\top:[(\partial_x\psi)^\top(s,X^{\epsilon,\nu}_s,\mc{L}(X^{\epsilon,\nu}_s))\partial_{\bar{y}} \chi] \\
& + \int_{\R^{2d}}\psi^\top(s,X^{\epsilon,\nu}_s,\mc{L}(X^{\epsilon,\nu}_s))\partial_\mu\chi(\hat{X}^\epsilon_s,X^{\epsilon,\nu}_s,\hat{Y}^\epsilon_s,Y^{\epsilon,\nu}_s,\mc{L}(X^{\epsilon,\nu}_s))[x]b(x,y,\mc{L}(X^{\epsilon,\nu}_s))\mc{L}(X^{\epsilon,\nu}_s,Y^{\epsilon,\nu}_s)(dx,dy) \\
& + \int_{\R^{2d}}\chi^\top(\hat{X}^\epsilon_s,X^{\epsilon,\nu}_s,\hat{Y}^\epsilon_s,Y^{\epsilon,\nu}_s,\mc{L}(X^{\epsilon,\nu}_s))\partial_\mu\psi(s,X^{\epsilon,\nu}_s,\mc{L}(X^{\epsilon,\nu}_s))[x]b(x,y,\mc{L}(X^{\epsilon,\nu}_s))\mc{L}(X^{\epsilon,\nu}_s,Y^{\epsilon,\nu}_s)(dx,dy)ds\\
B_3^\epsilon(t)&= \epsilon^2 \int_0^t \chi\cdot \dot{\psi}(s,X^{\epsilon,\nu}_s,\mc{L}(X^{\epsilon,\nu}_s))+ \psi^\top(s,X^{\epsilon,\nu}_s,\mc{L}(X^{\epsilon,\nu}_s))\partial_x \chi\hat{c} + \frac{1}{2}\psi^\top(s,X^{\epsilon,\nu}_s,\mc{L}(X^{\epsilon,\nu}_s))\hat{\sigma}\hat{\sigma}^\top:\partial^2_x \chi\\
&+\psi^\top(s,X^{\epsilon,\nu}_s,\mc{L}(X^{\epsilon,\nu}_s))\partial_{\bar{x}} \chi \tilde{c}+\chi^\top \partial_x\psi(s,X^{\epsilon,\nu}_s,\mc{L}(X^{\epsilon,\nu}_s))\tilde{c} \\
&+ \frac{1}{2}\psi^\top(s,X^{\epsilon,\nu}_s,\mc{L}(X^{\epsilon,\nu}_s))\tilde{\sigma}\tilde{\sigma}^\top:\partial^2_{\bar{x}} \chi+\tilde{\sigma}\tilde{\sigma}^\top:[(\partial_{\bar{x}} \chi)^\top\partial_x\psi(s,X^{\epsilon,\nu}_s,\mc{L}(X^{\epsilon,\nu}_s))]\\
&+\frac{1}{2}\chi^\top\tilde{\sigma}\tilde{\sigma}^\top:\partial^2_x\psi(s,X^{\epsilon,\nu}_s,\mc{L}(X^{\epsilon,\nu}_s))ds\\
B_4^\epsilon(t) &= \epsilon^2\int_0^t \int_{\R^{2d}}\psi^\top(s,X^{\epsilon,\nu}_s,\mc{L}(X^{\epsilon,\nu}_s))\partial_\mu\chi(\hat{X}^\epsilon_s,X^{\epsilon,\nu}_s,\hat{Y}^\epsilon_s,Y^{\epsilon,\nu}_s,\mc{L}(X^{\epsilon,\nu}_s))[x]c(x,y,\mc{L}(X^{\epsilon,\nu}_s))\mc{L}(X^{\epsilon,\nu}_s,Y^{\epsilon,\nu}_s)(dx,dy)\\
&+\int_{\R^{2d}}\chi^\top(\hat{X}^\epsilon_s,X^{\epsilon,\nu}_s,\hat{Y}^\epsilon_s,Y^{\epsilon,\nu}_s,\mc{L}(X^{\epsilon,\nu}_s))\partial_\mu\psi(s,X^{\epsilon,\nu}_s,\mc{L}(X^{\epsilon,\nu}_s))[x]c(x,y,\mc{L}(X^{\epsilon,\nu}_s))\mc{L}(X^{\epsilon,\nu}_s,Y^{\epsilon,\nu}_s)(dx,dy)\\
&+\frac{1}{2}\int_{\R^{2d}}\psi^\top(s,X^{\epsilon,\nu}_s,\mc{L}(X^{\epsilon,\nu}_s))\sigma\sigma^\top(x,y,\mc{L}(X^{\epsilon,\nu}_s)):\partial_z\partial_\mu\chi(\hat{X}^\epsilon_s,X^{\epsilon,\nu}_s,\hat{Y}^\epsilon_s,Y^{\epsilon,\nu}_s,\mc{L}(X^{\epsilon,\nu}_s))[x]\mc{L}(X^{\epsilon,\nu}_s,Y^{\epsilon,\nu}_s)(dx,dy)\\
&+\frac{1}{2}\int_{\R^{2d}}\chi^\top(\hat{X}^\epsilon_s,X^{\epsilon,\nu}_s,\hat{Y}^\epsilon_s,Y^{\epsilon,\nu}_s,\mc{L}(X^{\epsilon,\nu}_s))\sigma\sigma^\top(x,y,\mc{L}(X^{\epsilon,\nu}_s)):\partial_z\partial_\mu\psi(s,X^{\epsilon,\nu}_s,\mc{L}(X^{\epsilon,\nu}_s))[x]\mc{L}(X^{\epsilon,\nu}_s,Y^{\epsilon,\nu}_s)(dx,dy)ds\\
B_5^\epsilon(t)&= \epsilon\int_0^t \psi^\top(s,X^{\epsilon,\nu}_s,\mc{L}(X^{\epsilon,\nu}_s))  \partial_y \chi \hat{\tau}_1d\hat{W}_s + \epsilon \int_0^t \psi^\top(s,X^{\epsilon,\nu}_s,\mc{L}(X^{\epsilon,\nu}_s))\partial_{\bar{y}} \chi\tilde{\tau}_1 dW_s + \\
&+ \epsilon\int_0^t \psi^\top(s,X^{\epsilon,\nu}_s,\mc{L}(X^{\epsilon,\nu}_s)) \partial_y \chi \hat{\tau}_2d\hat{B}_s+\epsilon \int_0^t  \psi^\top(s,X^{\epsilon,\nu}_s,\mc{L}(X^{\epsilon,\nu}_s))\partial_{\bar{y}} \chi\tilde{\tau}_2dB_s\\
B_6^\epsilon(t)&= \epsilon^2 \int_0^t \psi^\top(s,X^{\epsilon,\nu}_s,\mc{L}(X^{\epsilon,\nu}_s))\partial_x \chi\hat{\sigma} d\hat{W}_s + \epsilon^2 \int_0^t \biggl[\psi^\top(s,X^{\epsilon,\nu}_s,\mc{L}(X^{\epsilon,\nu}_s))\partial_{\bar{x}} \chi+\chi^\top\partial_x\psi(s,X^{\epsilon,\nu}_s,\mc{L}(X^{\epsilon,\nu}_s))\biggr]\bar{\sigma}dW_s .
\end{align*}
where $\hat{c}$ denotes $(\hat{X}^\epsilon_s,\hat{Y}_s,\mc{L}(X^{\epsilon,\nu}_s))$ as an argument, $\tilde{c}$ denotes $(X^{\epsilon,\nu}_s,Y^{\epsilon,\nu}_s,\mc{L}(X^{\epsilon,\nu}_s))$ as an argument, and similarly for the other coefficients. The argument of $\chi$ where suppressed is $(\hat{X}^\epsilon_s,X^{\epsilon,\nu}_s,\hat{Y}^\epsilon_s,Y^{\epsilon,\nu}_s,\mc{L}(X^{\epsilon,\nu}_s))$.

Then, by computations along the same lines as Proposition \ref{prop:fluctuationestimateparticles2}, using the polynomial growth of $\chi$ and its derivatives in $y,\bar{y}$ uniformly in $x,\bar{x}$, and $\mu$ from Proposition \ref{proposition:allneededregularity}, we get there is $m\in\bb{N}$ such that:
\begin{align*}
&\biggl|\E\biggl[ \int_0^t \psi^\top(s,X^{\epsilon,\nu}_s,\mc{L}(X^{\epsilon,\nu}_s))\partial_{\mu}\Phi(X^{\epsilon,\nu}_s,Y^{\epsilon,\nu}_s,\mc{L}(X^{\epsilon,\nu}_s))[\hat{X}^{\epsilon}_s]b(\hat{X}^{\epsilon}_s,\hat{Y}^{\epsilon}_s,\mc{L}(X^{\epsilon,\nu}_s)) ds \biggr]\biggr|\\
&\leq C[\epsilon^2+(\epsilon+\epsilon^2)t](1+\sup_{t\in[0,T]}\E\biggl[|Y^{\epsilon,\nu}_t|^{2m} \biggr])\norm{\psi}_{\mc{M}_b^{\tilde{\bm{\zeta}}}([0,T]\times\R^d\times\mc{P}_2(\R^d);\R^d)}\\
&\leq C\epsilon (1+t)\norm{\psi}_{\mc{M}_b^{\tilde{\bm{\zeta}}}([0,T]\times\R^d\times\mc{P}_2(\R^d);\R^d)}
\end{align*}
by Lemma \ref{lemma:barYuniformbound} and the fact that $\epsilon\in (0,1]$.\end{proof}

\begin{proposition}\label{prop:newpurpleterm}
Assume \ref{assumption:uniformellipticity} - \ref{assumption:regularityofcoefficientsnew}. Consider $\psi\in \mc{M}_b^{\tilde{\bm{\zeta}}_2}([0,T]\times\R^d\times\mc{P}_2(\R^d);\R^d),$ where
\begin{align*}
\tilde{\bm{\zeta}}_2\coloneqq \br{(1,j,0), (1,0,j),(2,0,(0,k)):j=0,1,2,k=0,1}
\end{align*}
such that $\partial_\mu\dot{\psi}$ exists, is jointly continuous in all arguments, and $\sup_{t\in[0,T],x,z\in\R^d,\mu\in\mc{P}_2(\R^d)}|\partial_\mu\dot{\psi}(t,x,\mu)[z]|<\infty$. Then we have for any $t\in [0,T]$ and $\epsilon\in (0,1]$:
\begin{align*}
&\biggl|\E\biggl[\int_0^t  \int_{\R^{2d}}\Phi^\top(X^{\epsilon,\nu}_s,Y^{\epsilon,\nu}_s,\mc{L}(X^{\epsilon,\nu}_s))\partial_\mu\psi(s,X^{\epsilon,\nu}_s,\mc{L}(X^{\epsilon,\nu}_s)) [x]b(x,y,\mc{L}(X^{\epsilon,\nu}_s))\mc{L}(X^{\epsilon,\nu}_s,Y^{\epsilon,\nu}_s)(dx,dy)ds\biggr]\biggr|\\
&\leq C\epsilon(1+t)[\sup_{t\in [0,T]}\norm{\psi(t,\cdot,\cdot)}_{\mc{M}_b^{\tilde{\bm{\zeta}}_2}(\R^d\times\mc{P}_2(\R^d);\R^d)}+\sup_{t\in[0,T],x,z\in\R^d,\mu\in\mc{P}_2(\R^d)}|\partial_\mu\dot{\psi}(t,x,\mu)[z]|].
\end{align*}
\end{proposition}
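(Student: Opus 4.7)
The plan is to adapt the doubled-Poisson-equation strategy of Proposition \ref{prop:purpleterm2} to the present situation, where the roles of $\Phi$ and the test function $\psi$ are swapped in the integrand. As a first step, exactly as in the proof of Proposition \ref{prop:purpleterm2}, I would rewrite the $\mc{L}(X^{\epsilon,\nu}_s,Y^{\epsilon,\nu}_s)$-integral as a conditional expectation involving an independent copy $(\hat X^\epsilon,\hat Y^\epsilon)$ of $(X^{\epsilon,\nu},Y^{\epsilon,\nu})$ driven by independent Brownian motions $\hat W,\hat B$, reducing the problem to showing that the unconditional expectation of $\int_0^t\Phi^\top(X^{\epsilon,\nu}_s,Y^{\epsilon,\nu}_s,\mc{L}(X^{\epsilon,\nu}_s))\,\partial_\mu\psi(s,X^{\epsilon,\nu}_s,\mc{L}(X^{\epsilon,\nu}_s))[\hat X^\epsilon_s]\,b(\hat X^\epsilon_s,\hat Y^\epsilon_s,\mc{L}(X^{\epsilon,\nu}_s))\,ds$ is of order $\epsilon(1+t)$ times the stated norms of $\psi$.

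Next I would introduce the auxiliary ``doubled'' Poisson equation for $\tilde\chi:\R^d\times\R^d\times\R^d\times\R^d\times\mc{P}_2(\R^d)\to\R^d$,
\begin{align*}
\mc{L}^2_{x,\bar x,\mu}\tilde\chi_l(x,\bar x,y,\bar y,\mu)=-[\Phi(\bar x,\bar y,\mu)\,b(x,y,\mu)]_l,\quad l=1,\dots,d,
\end{align*}
normalized so that $\int_{\R^d}\int_{\R^d}\tilde\chi\,\pi(dy;x,\mu)\,\pi(d\bar y;\bar x,\mu)=0$. Fubini together with the centering condition \eqref{eq:centeringconditionold} on $b$ shows that the right-hand side averages to zero against the product invariant measure $\bar\pi$ of \eqref{eq:doublefrozeninvariantmeasure}, so Theorem 1 of \cite{PV1} combined with Proposition \ref{proposition:allneededregularity} yields a unique classical $\tilde\chi$ with bounded Lions derivatives and at most polynomial growth in $(y,\bar y)$ uniformly in the remaining arguments.

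I would then apply the measure-valued It\^o formula (Proposition 5.102 in \cite{CD}) to the product $F(t,x,\bar x,y,\bar y,\mu)\coloneqq\tilde\chi(x,\bar x,y,\bar y,\mu)\cdot\partial_\mu\psi(t,\bar x,\mu)[x]$ evaluated along $(t,\hat X^\epsilon_t,X^{\epsilon,\nu}_t,\hat Y^\epsilon_t,Y^{\epsilon,\nu}_t,\mc{L}(X^{\epsilon,\nu}_t))$. Because $\partial_\mu\psi$ does not depend on $(y,\bar y)$ and $\mc{L}^2_{x,\bar x,\mu}$ differentiates only in $(y,\bar y)$, the $1/\epsilon^2$ piece of the generator of $(\hat X^\epsilon,X^{\epsilon,\nu},\hat Y^\epsilon,Y^{\epsilon,\nu})$ applied to $F$ equals exactly $-\Phi(\bar x,\bar y,\mu)\,\partial_\mu\psi(t,\bar x,\mu)[x]\,b(x,y,\mu)$, which is the integrand of interest. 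Solving for its time-integral produces the analogue of the decomposition $\sum_{k=1}^6 B^\epsilon_k(t)$ of Proposition \ref{prop:purpleterm2}: a boundary term of order $\epsilon^2$; deterministic integrals where the $O(1/\epsilon)$ pieces of the generator combine with the overall $\epsilon^2$ prefactor to give $\epsilon$; a new time-derivative contribution $\epsilon^2\int_0^t\tilde\chi\cdot\partial_\mu\dot\psi\,ds$ arising from $\dot F=\tilde\chi\cdot\partial_\mu\dot\psi$, which is precisely why the hypothesis $\sup|\partial_\mu\dot\psi|<\infty$ is imposed; and mean-zero martingale integrals against $W,B,\hat W,\hat B$. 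Each non-martingale term is then bounded, term-by-term in the spirit of Proposition \ref{prop:purpleterm2}, by combining the polynomial growth in $(y,\bar y)$ of $\tilde\chi$ and its derivatives with the uniform moment bounds of Lemma \ref{lemma:barYuniformbound}, producing a total bound $C(\epsilon^2+\epsilon t)$ times the relevant norm of $\psi$, which is dominated by $C\epsilon(1+t)$ times the same norm since $\epsilon\downarrow 0$.

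The main obstacle is bookkeeping the mixed-order Lions and spatial derivatives produced when the measure-derivative pieces of It\^o are applied to $F$: since both $\tilde\chi$ and $\partial_\mu\psi$ carry $\mu$-dependence, the product rule expands $\partial_\mu F(\cdot)[z]$ as $\partial_\mu\tilde\chi(\cdot)[z]\,\partial_\mu\psi(t,\bar x,\mu)[x]+\tilde\chi\,\partial^2_\mu\psi(t,\bar x,\mu)[x,z]$, and the diffusion-cross-term $\partial_z\partial_\mu F$ further requires one $z$-derivative of $\partial^2_\mu\psi[x,z]$. This is precisely what dictates the presence of $(2,0,(0,1))$ in the collection $\tilde{\bm{\zeta}}_2$, and relies on Proposition \ref{proposition:allneededregularity} to certify the matching Lions regularity of $\tilde\chi$ itself so that every term in the expansion of $F$ is integrable against $\mc{L}(\hat X^\epsilon,X^{\epsilon,\nu},\hat Y^\epsilon,Y^{\epsilon,\nu})$.
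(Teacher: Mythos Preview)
Your approach is essentially identical to the paper's: independent-copy reduction, the doubled Poisson equation \eqref{eq:tildechi}, It\^o's formula applied to the product of $\tilde\chi$ with $\partial_\mu\psi$, and term-by-term bounds via Lemma \ref{lemma:barYuniformbound} and Proposition \ref{proposition:allneededregularity}. The only slip is dimensional: since $\Phi$ and $b$ are both $\R^d$-valued and $\partial_\mu\psi$ is $\R^{d\times d}$-valued, $\tilde\chi$ must be matrix-valued with $\mc{L}^2_{x,\bar x,\mu}\tilde\chi_{k,l}=-\Phi_k(\bar x,\bar y,\mu)\,b_l(x,y,\mu)$ and paired with $\partial_\mu\psi$ via the Frobenius inner product $\tilde\chi:\partial_\mu\psi$, not a dot product; with that correction your decomposition and bounds go through exactly as in the paper.
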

\begin{proof}
As in Proposition \ref{prop:purpleterm2}, it suffices to show the bound for
\begin{align*}
&\biggl|\E\biggl[ \int_0^t\Phi^\top(X^{\epsilon,\nu}_s,Y^{\epsilon,\nu}_s,\mc{L}(X^{\epsilon,\nu}_s))\partial_{\mu}\psi(s,X^{\epsilon,\nu}_s,\mc{L}(X^{\epsilon,\nu}_s))[\hat{X}^{\epsilon}_s] b(\hat{X}^{\epsilon}_s,\hat{Y}^{\epsilon}_s,\mc{L}(X^{\epsilon,\nu}_s)) ds \biggr]\biggr|,
\end{align*}
where $(\hat{X}^{\epsilon},\hat{Y}^{\epsilon})$ is an independent copy of $(X^{\epsilon,\nu},Y^{\epsilon,\nu})$.
We consider the new doubled Poisson equation
\begin{align}\label{eq:tildechi}
\mc{L}^2_{x,\bar{x},\mu}\tilde{\chi}_{k,l}(x,\bar{x},y,\bar{y},\mu) &= -b_l(x,y,\mu)\Phi_k(\bar{x},\bar{y},\mu),l,k=1,...,d\\
\int_{\R^d}\int_{\R^d}\tilde{\chi}(x,\bar{x},y,\bar{y},\mu)\pi(dy;x,\mu)\pi(d\bar{y},\bar{x},\mu)&=0z\nonumber
\end{align}
where $\mc{L}^2_{x,\bar{x},\mu}$ is as in Equation \eqref{eq:2copiesgenerator}. We note once again that by assumption \eqref{eq:centeringconditionold}, the inhomogeneity is centered with respect to the invariant measure associated to $\mc{L}^2_{x,\bar{x},\mu}$ given in Equation \eqref{eq:doublefrozeninvariantmeasure}.
Applying It\^o's formula for measure dependent functions to $$\tilde{\chi}(\hat{X}^\epsilon_t,X^{\epsilon,\nu}_t,\hat{Y}^\epsilon_t,Y^{\epsilon,\nu}_t,\mc{L}(X^{\epsilon,\nu}_t)):\partial_{\mu}\psi(s,X^{\epsilon,\nu}_s,\mc{L}(X^{\epsilon,\nu}_s))[\hat{X}^{\epsilon}_s]$$ (using here the required differentiability of $\tilde{\chi}$ granted by Proposition \ref{proposition:allneededregularity}),
\begin{align*}
\int_0^t \Phi^\top(X^{\epsilon,\nu}_s,Y^{\epsilon,\nu}_s,\mc{L}(X^{\epsilon,\nu}_s))\partial_{\mu}\psi(s,X^{\epsilon,\nu}_s,\mc{L}(X^{\epsilon,\nu}_s))[\hat{X}^{\epsilon}_s] b(\hat{X}^{\epsilon}_s,\hat{Y}^{\epsilon}_s,\mc{L}(X^{\epsilon,\nu}_s))ds& = \sum_{j=1}^6C_j^{\epsilon}(t)
\end{align*}
where, for $k=1,...,d$, denoting by $\tilde{\chi}_k$ the vector comprising the $k$'th row of $\tilde{\chi}$:
\begin{align*}
&C_1^\epsilon(t) =  \epsilon^2\biggl[\tilde{\chi}(\hat{X}^\epsilon_0,X^{\epsilon,\nu}_0,\hat{Y}^\epsilon_0,Y^{\epsilon,\nu}_0,\mc{L}(X^{\epsilon,\nu}_0)):\partial_\mu\psi(0,X^{\epsilon,\nu}_0,\mc{L}(X^{\epsilon,\nu}_0))[\hat{X}^\epsilon_0]\\
&\quad-\tilde{\chi}(\hat{X}^\epsilon_t,X^{\epsilon,\nu}_t,\hat{Y}^\epsilon_t,Y^{\epsilon,\nu}_t,\mc{L}(X^{\epsilon,\nu}_t)):\partial_\mu\psi(t,X^{\epsilon,\nu}_t,\mc{L}(X^{\epsilon,\nu}_t))[\hat{X}^\epsilon_t] \biggr] \\
&C_2^\epsilon(t)= \epsilon \sum_{k=1}^d\biggl\lbrace\int_0^t (\partial_\mu\psi_k)^\top(s,X^{\epsilon,\nu}_s,\mc{L}(X^{\epsilon,\nu}_s))[\hat{X}^\epsilon_s]\partial_x\tilde{\chi}_k \hat{b} +(\tilde{\chi}_k)^\top\partial_z\partial_\mu\psi_k(s,X^{\epsilon,\nu}_s,\mc{L}(X^{\epsilon,\nu}_s))[\hat{X}^\epsilon_s]\hat{b} \\
&\quad+(\partial_\mu\psi_k)^\top(s,X^{\epsilon,\nu}_s,\mc{L}(X^{\epsilon,\nu}_s))[\hat{X}^\epsilon_s] \partial_{\bar{x}}\tilde{\chi}_k\tilde{b}+(\tilde{\chi}_k)^\top(\partial_\mu\partial_x\psi_k)^\top(s,X^{\epsilon,\nu}_s,\mc{L}(X^{\epsilon,\nu}_s))[\hat{X}^\epsilon_s]\tilde{b} \\
&\quad+ (\partial_\mu\psi_k)^\top(s,X^{\epsilon,\nu}_s,\mc{L}(X^{\epsilon,\nu}_s))[\hat{X}^\epsilon_s]\partial_y\tilde{\chi}_k\hat{g} + (\partial_\mu\psi_k)^\top(s,X^{\epsilon,\nu}_s,\mc{L}(X^{\epsilon,\nu}_s))[\hat{X}^\epsilon_s]\partial_{\bar{y}}\tilde{\chi}_k\tilde{g}\\
&\quad+ (\partial_\mu\psi_k)^\top(s,X^{\epsilon,\nu}_s,\mc{L}(X^{\epsilon,\nu}_s))[\hat{X}^\epsilon_s]\hat{\tau}_1\hat{\sigma}^\top:\partial_x\partial_y\tilde{\chi}_k+\hat{\tau}_1\hat{\sigma}^\top:[(\partial_z\partial_\mu\psi_k)^\top(s,X^{\epsilon,\nu}_s,\mc{L}(X^{\epsilon,\nu}_s))[\hat{X}^\epsilon_s]\partial_y\tilde{\chi}_k]\\
&\quad+ (\partial_\mu\psi_k)^\top(s,X^{\epsilon,\nu}_s,\mc{L}(X^{\epsilon,\nu}_s))[\hat{X}^\epsilon_s]\tilde{\tau}_1\tilde{\sigma}^\top:\partial_{\bar{x}}\partial_{\bar{y}}\tilde{\chi}_k+\tilde{\tau}_1\tilde{\sigma}^\top:[\partial_\mu\partial_x\psi_k(s,X^{\epsilon,\nu}_s,\mc{L}(X^{\epsilon,\nu}_s))[\hat{X}^\epsilon_s]\partial_{\bar{y}}\tilde{\chi}_k] \\
& + \int_{\R^{2d}}(\partial_\mu\psi_k)^\top(s,X^{\epsilon,\nu}_s,\mc{L}(X^{\epsilon,\nu}_s))[\hat{X}^\epsilon_s]\partial_\mu\tilde{\chi}_k(\hat{X}^\epsilon_s,X^{\epsilon,\nu}_s,\hat{Y}^\epsilon_s,Y^{\epsilon,\nu}_s,\mc{L}(X^{\epsilon,\nu}_s))[x]b(x,y,\mc{L}(X^{\epsilon,\nu}_s))\mc{L}(X^{\epsilon,\nu}_s,Y^{\epsilon,\nu}_s)(dx,dy) \\
&\quad + \int_{\R^{2d}}(\tilde{\chi}_k)^\top(\hat{X}^\epsilon_s,X^{\epsilon,\nu}_s,\hat{Y}^\epsilon_s,Y^{\epsilon,\nu}_s,\mc{L}(X^{\epsilon,\nu}_s))\partial^2_\mu\psi_k(s,X^{\epsilon,\nu}_s,\mc{L}(X^{\epsilon,\nu}_s))[\hat{X}^\epsilon_s,x]b(x,y,\mc{L}(X^{\epsilon,\nu}_s))\mc{L}(X^{\epsilon,\nu}_s,Y^{\epsilon,\nu}_s)(dx,dy)ds\biggr\rbrace\\
&C_3^\epsilon(t)= \epsilon^2 \sum_{k=1}^d\biggl\lbrace\int_0^t \tilde{\chi}_k\cdot\partial_\mu\dot{\psi_k}(s,X^{\epsilon,\nu}_s,\mc{L}(X^{\epsilon,\nu}_s))[\hat{X}^\epsilon_s]+ (\partial_\mu\psi_k)^\top(s,X^{\epsilon,\nu}_s,\mc{L}(X^{\epsilon,\nu}_s))[\hat{X}^\epsilon_s]\partial_x\tilde{\chi}_k\hat{c}\\
&\quad+(\tilde{\chi}_k)^\top\partial_z\partial_\mu\psi_k(s,X^{\epsilon,\nu}_s,\mc{L}(X^{\epsilon,\nu}_s))[\hat{X}^\epsilon_s]]\hat{c} + \frac{1}{2}(\partial_\mu\psi_k)^\top(s,X^{\epsilon,\nu}_s,\mc{L}(X^{\epsilon,\nu}_s))[\hat{X}^\epsilon_s]\hat{\sigma}\hat{\sigma}^\top:\partial^2_x\tilde{\chi}_k\\
&\quad+\hat{\sigma}\hat{\sigma}^\top:[(\partial_z\partial_\mu\psi_k)^\top(s,X^{\epsilon,\nu}_s,\mc{L}(X^{\epsilon,\nu}_s))[\hat{X}^\epsilon_s]\partial_x\tilde{\chi}_k] +\frac{1}{2}(\tilde{\chi}_k)^\top\hat{\sigma}\hat{\sigma}^\top:\partial^2_z\partial_\mu\psi_k(s,X^{\epsilon,\nu}_s,\mc{L}(X^{\epsilon,\nu}_s))[\hat{X}^\epsilon_s]\\
&\quad+(\partial_\mu\psi_k)^\top(s,X^{\epsilon,\nu}_s,\mc{L}(X^{\epsilon,\nu}_s))[\hat{X}^\epsilon_s]\partial_{\bar{x}}\tilde{\chi}_k\tilde{c}+(\tilde{\chi}_k)^\top(\partial_\mu\partial_x\psi_k)^\top(s,X^{\epsilon,\nu}_s,\mc{L}(X^{\epsilon,\nu}_s))[\hat{X}^\epsilon_s]\tilde{c} \\
&\quad+ \frac{1}{2}(\partial_\mu\psi_k)^\top(s,X^{\epsilon,\nu}_s,\mc{L}(X^{\epsilon,\nu}_s))[\hat{X}^\epsilon_s]\tilde{\sigma}\tilde{\sigma}^\top:\partial_{\bar{x}}\partial_{\bar{x}}\tilde{\chi}_k+\tilde{\sigma}\tilde{\sigma}^\top:[\partial_\mu\partial_x\psi_k(s,X^{\epsilon,\nu}_s,\mc{L}(X^{\epsilon,\nu}_s))[\hat{X}^\epsilon_s]\partial_{\bar{x}}\tilde{\chi}_k]\\
&\quad+\frac{1}{2}(\tilde{\chi}_k)^\top\tilde{\sigma}\tilde{\sigma}^\top:\partial^2_x\partial_\mu\psi_k(s,X^{\epsilon,\nu}_s,\mc{L}(X^{\epsilon,\nu}_s))[\hat{X}^\epsilon_s]ds\biggr\rbrace\\
&C_4^\epsilon(t) = \epsilon^2\sum_{k=1}^d\biggl\lbrace\int_0^t \int_{\R^{2d}}(\partial_\mu\psi_k)^\top(s,X^{\epsilon,\nu}_s,\mc{L}(X^{\epsilon,\nu}_s))[\hat{X}^\epsilon_s]\partial_\mu\tilde{\chi}_k(\hat{X}^\epsilon_s,X^{\epsilon,\nu}_s,\hat{Y}^\epsilon_s,Y^{\epsilon,\nu}_s,\mc{L}(X^{\epsilon,\nu}_s))[x]\\
&\hspace{10cm}c(x,y,\mc{L}(X^{\epsilon,\nu}_s))\mc{L}(X^{\epsilon,\nu}_s,Y^{\epsilon,\nu}_s)(dx,dy)\\
&\quad+\int_{\R^{2d}}(\tilde{\chi}_k)^\top(\hat{X}^\epsilon_s,X^{\epsilon,\nu}_s,\hat{Y}^\epsilon_s,Y^{\epsilon,\nu}_s,\mc{L}(X^{\epsilon,\nu}_s))\partial^2_\mu\psi_k(s,X^{\epsilon,\nu}_s,\mc{L}(X^{\epsilon,\nu}_s))[\hat{X}^\epsilon_s,x]c(x,y,\mc{L}(X^{\epsilon,\nu}_s))\mc{L}(X^{\epsilon,\nu}_s,Y^{\epsilon,\nu}_s)(dx,dy)\\
&\quad+\frac{1}{2}\int_{\R^{2d}}(\partial_\mu\psi_k)^\top(s,X^{\epsilon,\nu}_s,\mc{L}(X^{\epsilon,\nu}_s))[\hat{X}^\epsilon_s]\sigma\sigma^\top(x,y,\mc{L}(X^{\epsilon,\nu}_s))\\
&\hspace{7cm}:\partial_z\partial_\mu\tilde{\chi}_k(\hat{X}^\epsilon_s,X^{\epsilon,\nu}_s,\hat{Y}^\epsilon_s,Y^{\epsilon,\nu}_s,\mc{L}(X^{\epsilon,\nu}_s))[x]\mc{L}(X^{\epsilon,\nu}_s,Y^{\epsilon,\nu}_s)(dx,dy)\\
&\quad+\frac{1}{2}\int_{\R^{2d}}(\tilde{\chi}_k)^\top(\hat{X}^\epsilon_s,X^{\epsilon,\nu}_s,\hat{Y}^\epsilon_s,Y^{\epsilon,\nu}_s,\mc{L}(X^{\epsilon,\nu}_s))\sigma\sigma^\top(x,y,\mc{L}(X^{\epsilon,\nu}_s))\\
&\hspace{7cm}:\partial_{z_2}\partial^2_\mu\psi_k(s,X^{\epsilon,\nu}_s,\mc{L}(X^{\epsilon,\nu}_s))[\hat{X}^\epsilon_s,x]\mc{L}(X^{\epsilon,\nu}_s,Y^{\epsilon,\nu}_s)(dx,dy)ds\biggr\rbrace\\
&C_5^\epsilon(t)= \epsilon\sum_{k=1}^d\biggl\lbrace\int_0^t  (\partial_\mu\psi_k)^\top(s,X^{\epsilon,\nu}_s,\mc{L}(X^{\epsilon,\nu}_s))[\hat{X}^\epsilon_s]\partial_y\tilde{\chi}_k \hat{\tau}_1d\hat{W}_s + \epsilon \int_0^t  (\partial_\mu\psi_k)^\top(s,X^{\epsilon,\nu}_s,\mc{L}(X^{\epsilon,\nu}_s))[\hat{X}^\epsilon_s]\partial_{\bar{y}}\tilde{\chi}_k\tilde{\tau}_1dW_s \\
&\quad+ \epsilon\int_0^t  (\partial_\mu\psi_k)^\top(s,X^{\epsilon,\nu}_s,\mc{L}(X^{\epsilon,\nu}_s))[\hat{X}^\epsilon_s]\partial_y\tilde{\chi}_k \hat{\tau}_2d\hat{B}_s \\
&\quad+ \epsilon \int_0^t  (\partial_\mu\psi_k)^\top(s,X^{\epsilon,\nu}_s,\mc{L}(X^{\epsilon,\nu}_s))[\hat{X}^\epsilon_s]\partial_{\bar{y}}\tilde{\chi}_k\tilde{\tau}_2dB_s\biggr\rbrace\\
&C_6^\epsilon(t)= \epsilon^2 \sum_{k=1}^d\biggl\lbrace\int_0^t \biggl[(\partial_\mu\psi_k)^\top(s,X^{\epsilon,\nu}_s,\mc{L}(X^{\epsilon,\nu}_s))[\hat{X}^\epsilon_s]\partial_x\tilde{\chi}_k+(\tilde{\chi}_k)^\top\partial_z\partial_\mu\psi_k(s,X^{\epsilon,\nu}_s,\mc{L}(X^{\epsilon,\nu}_s))[\hat{X}^\epsilon_s]\biggr]\hat{\sigma} d\hat{W}_s \\
&\quad+ \epsilon^2 \int_0^t \biggl[(\partial_\mu\psi_k)^\top(s,X^{\epsilon,\nu}_s,\mc{L}(X^{\epsilon,\nu}_s))[\hat{X}^\epsilon_s]\partial_{\bar{x}}\tilde{\chi}_k+(\tilde{\chi}_k)^\top(\partial_\mu\partial_x\psi_k)^\top(s,X^{\epsilon,\nu}_s,\mc{L}(X^{\epsilon,\nu}_s))[\hat{X}^\epsilon_s]\biggr]\tilde{\sigma}dW_s\biggr\rbrace,
\end{align*}
where $\hat{c}$ denotes $(\hat{X}^\epsilon_s,\hat{Y}_s,\mc{L}(X_s))$ as an argument, $\tilde{c}$ denotes $(X^{\epsilon,\nu}_s,Y^{\epsilon,\nu}_s,\mc{L}(X^{\epsilon,\nu}_s))$ as an argument, and the other coefficients. The argument of $\chi$ where suppressed is $(\hat{X}^\epsilon_s,X^{\epsilon,\nu}_s,\hat{Y}^\epsilon_s,Y^{\epsilon,\nu}_s,\mc{L}(X^{\epsilon,\nu}_s))$.

Then,  by computations along the same lines as Proposition \ref{prop:fluctuationestimateparticles2}, using the polynomial growth of $\tilde{\chi}$ and its derivatives in $y,\bar{y}$ uniformly in $x,\bar{x}$, and $\mu$ from Proposition \ref{proposition:allneededregularity}, we get there is $m\in\bb{N}$ such that:
\begin{align*}
& \biggl|\E\biggl[\int_0^t  \Phi^\top(X^{\epsilon,\nu}_s,Y^{\epsilon,\nu}_s,\mc{L}(X^{\epsilon,\nu}_s))\partial_{\mu}\psi(s,X^{\epsilon,\nu}_s,\mc{L}(X^{\epsilon,\nu}_s))[\hat{X}^{\epsilon}_s] b(\hat{X}^{\epsilon}_s,\hat{Y}^{\epsilon}_s,\mc{L}(X^{\epsilon,\nu}_s)) ds \biggr]\biggr|\\
&\leq C[\epsilon^2+(\epsilon+\epsilon^2)t](1+\sup_{t\in[0,T]}\E\biggl[|Y^{\epsilon,\nu}_t|^{2m} \biggr])[\sup_{t\in [0,T]}\norm{\psi(t,\cdot,\cdot)}_{\mc{M}_b^{\tilde{\bm{\zeta}}_2}(\R^d\times\mc{P}_2(\R^d);\R^d)}+\\
&\qquad+\sup_{t\in[0,T],x,z\in\R^d,\mu\in\mc{P}_2(\R^d)}|\partial_\mu\dot{\psi}(t,x,\mu)[z]|]\\
&\leq C\epsilon (1+t)[\sup_{t\in [0,T]}\norm{\psi(t,\cdot,\cdot)}_{\mc{M}_b^{\tilde{\bm{\zeta}}_2}(\R^d\times\mc{P}_2(\R^d);\R^d)}+\sup_{t\in[0,T],x,z\in\R^d,\mu\in\mc{P}_2(\R^d)}|\partial_\mu\dot{\psi}(t,x,\mu)[z]|]
\end{align*}
by Lemma \ref{lemma:barYuniformbound} and the fact that $\epsilon\in (0,1]$.\end{proof}

\begin{proposition}\label{prop:llntypefluctuationestimate2}
Assume \ref{assumption:uniformellipticity} - \ref{assumption:strongexistence}, and let $F:\R^d\times\R^d\times\mc{P}_2(\R^d)\tto \R^k$ be any function such that there exists $\Xi$ the unique classical solution to Equation \eqref{eq:driftcorrectorproblem} with $\Xi \in \mc{M}_p^{\tilde{\bm{\zeta}}}(\R^d\times\R^d\times\mc{P}_2(\R^d);\R^d)$ and $\partial_y\Xi \in \mc{M}_p^{\bm{\zeta}_{x}}(\R^d\times\R^d\times\mc{P}_2(\R^d);\R^{k\times d})$ where $\tilde{\bm{\zeta}}$ is as in Proposition \ref{prop:fluctuationestimateparticles2} and
\begin{align*}
\bm{\zeta}_{x}\coloneqq \br{(0,j,0):j\in\br{0,1}}
\end{align*}
(in particular this holds with $F=\gamma,D$ under the additional Assumption \ref{assumption:regularityofcoefficientsnew} via Proposition \ref{proposition:allneededregularity}). Then for $\bar{F}(x,\mu)\coloneqq \int_{\R^d} F(x,y,\mu)\pi(dy;x,\mu)$, with $\pi$ as in Equation \eqref{eq:invariantmeasureold}, any $\psi\in \mc{M}_b^{\tilde{\bm{\zeta}}}([0,T]\times\R^d\times\mc{P}_2(\R^d);\R^k)$ and $t\in[0,T]$, and $\epsilon\in (0,1]$:
\begin{align*}
\biggl|\E\biggl[\int_0^t \biggl(F(X^{\epsilon,\nu}_s,Y^{\epsilon,\nu}_s,\mc{L}(X^{\epsilon,\nu}_s))-\bar{F}(X^{\epsilon,\nu}_s,\mc{L}(X^{\epsilon,\nu}_s))\biggr)\cdot\psi(s,X^{\epsilon,\nu}_s,\mc{L}(X^{\epsilon,\nu}_s))dt\biggr]\biggr|\leq C\epsilon(1+t)\norm{\psi}_{\mc{M}_b^{\tilde{\bm{\zeta}}}([0,T]\times\R^d\times\mc{P}_2(\R^d);\R^k)}.
\end{align*}
\end{proposition}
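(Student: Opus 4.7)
The plan is to mimic the proof of Proposition \ref{prop:fluctuationestimateparticles2}, with the corrector $\Phi$ replaced by $\Xi$ and the centered quantity $b$ replaced by $F-\bar{F}$. The Poisson equation $\mc{L}_{x,\mu}\Xi = -(F-\bar{F})$ together with the fact that $\psi$ has no $y$-dependence means that the $O(1/\epsilon^2)$ piece of the generator of $(X^{\epsilon,\nu},Y^{\epsilon,\nu})$ acting on the product $\Xi\cdot\psi$ is exactly $\mc{L}_{x,\mu}\Xi\cdot\psi = -(F-\bar{F})\cdot\psi$. Applying It\^o's formula for measure-dependent functions (Proposition 5.102 in \cite{CD}) to $t\mapsto \Xi(X^{\epsilon,\nu}_t,Y^{\epsilon,\nu}_t,\mc{L}(X^{\epsilon,\nu}_t))\cdot\psi(t,X^{\epsilon,\nu}_t,\mc{L}(X^{\epsilon,\nu}_t))$ and solving for the desired integral after multiplying through by $\epsilon^2$ yields
\begin{align*}
\int_0^t (F-\bar{F})\cdot\psi\,ds = \epsilon^2\bigl[(\Xi\cdot\psi)_0-(\Xi\cdot\psi)_t\bigr] + \epsilon\int_0^t R_1(s)\,ds + \epsilon^2\int_0^t R_0(s)\,ds + \epsilon^2 M_t,
\end{align*}
where $M_t$ is the martingale part and $R_1,R_0$ collect the $O(1/\epsilon)$ and $O(1)$ drift remainders, respectively.

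Writing out the expansion of $L(\Xi\cdot\psi)$, the term $R_1$ comprises the pieces $b\cdot\nabla_x\Xi\cdot\psi$, $\Xi\cdot(\nabla_x\psi)b$, $g\cdot\nabla_y\Xi\cdot\psi$, $\sigma\tau_1^\top:\nabla_x\nabla_y\Xi\cdot\psi$, and the Lions-derivative couplings $\int\partial_\mu\Xi[x]\cdot b(x,y,\mc{L}(X^{\epsilon,\nu}_s))\mc{L}(X^{\epsilon,\nu}_s,Y^{\epsilon,\nu}_s)(dx,dy)\cdot\psi$ together with its analogue with $\partial_\mu\psi$ in place of $\partial_\mu\Xi$. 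The term $R_0$ gathers the remaining $O(1)$ contributions: $\Xi\cdot\dot{\psi}$, the slow-drift pieces paired with $c$, the diffusion pieces paired with $\sigma\sigma^\top$ (including $\nabla_x^2\Xi$ and $\nabla_x^2\psi$), and the Lions-derivative terms paired with $c$ and $\sigma\sigma^\top$ (requiring $\partial_z\partial_\mu\Xi$ and $\partial_z\partial_\mu\psi$). The hypotheses $\Xi\in \mc{M}_p^{\tilde{\bm{\zeta}}}(\R^d\times\R^d\times\mc{P}_2(\R^d))$ and $\Xi_y\in\mc{M}_p^{\bm{\zeta}_x}(\R^d\times\R^d\times\mc{P}_2(\R^d))$ are tailored so that precisely the family $\{\Xi,\nabla_x\Xi,\nabla_x^2\Xi,\partial_\mu\Xi,\partial_z\partial_\mu\Xi,\nabla_y\Xi,\nabla_x\nabla_y\Xi\}$ of derivatives appearing in $R_0,R_1$ exists and grows at most polynomially in $y$ uniformly in $(x,\mu)$.

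Taking expectations kills $M_t$ (polynomial $y$-growth of the integrands together with Lemma \ref{lemma:barYuniformbound} and the boundedness of $\sigma,\tau_1,\tau_2$ yield the required square-integrability). Each remaining boundary and drift term is then estimated just as the $A^\epsilon_k$'s were in the proof of Proposition \ref{prop:fluctuationestimateparticles2}: the polynomial-growth control on the derivatives of $\Xi$, the uniform-in-$\epsilon$ moment bound of Lemma \ref{lemma:barYuniformbound} on $Y^{\epsilon,\nu}$, the Lipschitz (hence at-most-linear-in-$y$) bounds on $b,c,\sigma$ and the boundedness of $g,\tau_1,\tau_2$ from Assumption \ref{assumption:strongexistence}, together with the boundedness of $\psi,\dot\psi$ and their derivatives encoded by $\psi\in\mc{M}_b^{\tilde{\bm{\zeta}}}([0,T]\times\R^d\times\mc{P}_2(\R^d))$, yield
\begin{align*}
\sup_{s\in[0,T]}\bigl(\E[|R_1(s)|]+\E[|R_0(s)|]\bigr)+\sup_{s\in[0,T]}\E[|(\Xi\cdot\psi)_s|]\leq C\norm{\psi}_{\mc{M}_b^{\tilde{\bm{\zeta}}}([0,T]\times\R^d\times\mc{P}_2(\R^d))}.
\end{align*}
The boundary piece contributes $O(\epsilon^2)$, $\epsilon\int_0^t R_1 ds$ contributes $O(\epsilon t)$, and $\epsilon^2\int_0^t R_0 ds$ contributes $O(\epsilon^2 t)$, summing to the claimed bound $C\epsilon(1+t)\norm{\psi}_{\mc{M}_b^{\tilde{\bm{\zeta}}}([0,T]\times\R^d\times\mc{P}_2(\R^d))}$. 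The main obstacle is purely the bookkeeping of the full expansion of $L(\Xi\cdot\psi)$ in powers of $\epsilon$ and the term-by-term verification that every derivative appearing is covered by the regularity classes $\tilde{\bm{\zeta}}$ and $\bm{\zeta}_x$ in the hypothesis; crucially, no coupling construction as in Propositions \ref{prop:purpleterm2}, \ref{prop:newpurpleterm} is required here, because the Lions-derivative terms paired with the (unbounded-in-$y$) drift $b$ sit at order $1/\epsilon$, and the $\epsilon^2$ prefactor from the rearrangement already converts them into $O(\epsilon t)$ contributions.
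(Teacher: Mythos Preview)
Your proposal is correct and follows essentially the same approach as the paper: apply the It\^o formula for measure-dependent functions to $\Xi\cdot\psi$, split the resulting expansion into the boundary term, the $O(\epsilon)$ and $O(\epsilon^2)$ drift remainders, and the martingale part (the paper labels these $D_1^\epsilon,\dots,D_6^\epsilon$), then bound each via Lemma \ref{lemma:barYuniformbound} and the polynomial-growth/boundedness hypotheses. Your observation that no coupling construction is needed here is exactly right, and your identification of which derivatives of $\Xi$ and $\psi$ are required matches the paper's.
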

\begin{proof}
By assumption, we can consider $\Xi:\R^d\times\R^d\times\mc{P}(\R^d)\tto\R^k$ the unique classical solution to
\begin{align}\label{eq:driftcorrectorproblem}
\mc{L}_{x,\mu}\Xi_l(x,y,\mu) &= -[F_l(x,y,\mu)-\int_{\R^d}F_l(x,y,\mu)\pi(dy;x,\mu)],l=1,...,k\\
\int_{\R}\Xi(x,y,\mu)\pi(dy;x,\mu)&=0\nonumber.
\end{align}
Applying It\^o's formula for measure-dependent functions to $\Xi(X^{\epsilon,\nu}_t,Y^{\epsilon,\nu}_t,\mc{L}(X^{\epsilon,\nu}_t))\cdot\psi(t,X^{\epsilon,\nu}_t)$, we get:
\begin{align*}
\int_0^t \biggl(F(X^{\epsilon,\nu}_s,Y^{\epsilon,\nu}_s,\mc{L}(X^{\epsilon,\nu}_s))-\bar{F}(X^{\epsilon,\nu}_s,\mc{L}(X^{\epsilon,\nu}_s))\biggr)\cdot\psi(s,X^{\epsilon,\nu}_s,\mc{L}(X^{\epsilon,\nu}_s))dt & =\sum_{j=1}^6 D_j^\epsilon(t)
\end{align*}
where
\begin{align*}
D_1^\epsilon(t)& =  \epsilon^2\biggl[\Xi(X^{\epsilon,\nu}_0,Y^{\epsilon,\nu}_0,\mc{L}(X^{\epsilon,\nu}_0))\cdot\psi(0,X^{\epsilon,\nu}_0,\mc{L}(X^{\epsilon,\nu}_0))-\Xi(X^{\epsilon,\nu}_t,Y^{\epsilon,\nu}_t,\mc{L}(X^{\epsilon,\nu}_t))\cdot\psi(t,X^{\epsilon,\nu}_t,\mc{L}(X^{\epsilon,\nu}_t)) \biggr] \\
D_2^\epsilon(t)&= \epsilon\int_0^t \psi^\top(s,X^{\epsilon,\nu}_s,\mc{L}(X^{\epsilon,\nu}_s))\partial_x\Xi b+\Xi^\top\partial_x\psi(s,X^{\epsilon,\nu}_s,\mc{L}(X^{\epsilon,\nu}_s))b  + \psi^\top(s,X^{\epsilon,\nu}_s,\mc{L}(X^{\epsilon,\nu}_s))\partial_y\Xi g\\
&+ \psi^\top(s,X^{\epsilon,\nu}_s,\mc{L}(X^{\epsilon,\nu}_s))\tau_1\sigma^\top:\partial_x\partial_y\Xi+\tau_1\sigma^\top:[(\partial_x\psi)^\top(s,X^{\epsilon,\nu}_s,\mc{L}(X^{\epsilon,\nu}_s))\partial_y\Xi] \\
& + \int_{\R^{2d}}\psi^\top(s,X^{\epsilon,\nu}_s,\mc{L}(X^{\epsilon,\nu}_s))\partial_\mu\Xi(X^{\epsilon,\nu}_s,Y^{\epsilon,\nu}_s,\mc{L}(X^{\epsilon,\nu}_s))[x]b(x,y,\mc{L}(X^{\epsilon,\nu}_s))\mc{L}(X^{\epsilon,\nu}_s,Y^{\epsilon,\nu}_s)(dx,dy) \\
&+ \int_{\R^{2d}}\Xi^\top(X^{\epsilon,\nu}_s,Y^{\epsilon,\nu}_s,\mc{L}(X^{\epsilon,\nu}_s))\partial_\mu\psi(s,X^{\epsilon,\nu}_s,\mc{L}(X^{\epsilon,\nu}_s))[x]b(x,y,\mc{L}(X^{\epsilon,\nu}_s))\mc{L}(X^{\epsilon,\nu}_s,Y^{\epsilon,\nu}_s)(dx,dy) ds\\
D_3^\epsilon(t)&= \epsilon^2 \int_0^t \Xi\cdot\dot{\psi}(s,X^{\epsilon,\nu}_s,\mc{L}(X^{\epsilon,\nu}_s))+\psi^\top(s,X^{\epsilon,\nu}_s,\mc{L}(X^{\epsilon,\nu}_s))\partial_x\Xi c+\Xi^\top\partial_x\psi(s,X^{\epsilon,\nu}_s,\mc{L}(X^{\epsilon,\nu}_s)) c\\
&+ \frac{1}{2}\psi^\top(s,X^{\epsilon,\nu}_s,\mc{L}(X^{\epsilon,\nu}_s))\sigma\sigma^\top:\partial^2_x\Xi+\sigma\sigma^\top:[(\partial_x\Xi)^\top\partial_x\psi(s,X^{\epsilon,\nu}_s,\mc{L}(X^{\epsilon,\nu}_s))]\\
&+\frac{1}{2}\Xi^\top\sigma\sigma^\top:\partial^2_x\psi(s,X^{\epsilon,\nu}_s,\mc{L}(X^{\epsilon,\nu}_s))ds\\
D_4^\epsilon(t) &= \epsilon^2\int_0^t \int_{\R^{2d}}\psi^\top(s,X^{\epsilon,\nu}_s,\mc{L}(X^{\epsilon,\nu}_s))\partial_\mu\Xi(X^{\epsilon,\nu}_s,Y^{\epsilon,\nu}_s,\mc{L}(X^{\epsilon,\nu}_s))[x]c(x,y,\mc{L}(X^{\epsilon,\nu}_s))\mc{L}(X^{\epsilon,\nu}_s,Y^{\epsilon,\nu}_s)(dx,dy)\\
&+\int_{\R^{2d}}\Xi^\top(X^{\epsilon,\nu}_s,Y^{\epsilon,\nu}_s,\mc{L}(X^{\epsilon,\nu}_s))\partial_\mu\psi(s,X^{\epsilon,\nu}_s,\mc{L}(X^{\epsilon,\nu}_s))[x]c(x,y,\mc{L}(X^{\epsilon,\nu}_s))\mc{L}(X^{\epsilon,\nu}_s,Y^{\epsilon,\nu}_s)(dx,dy)\\
&+\frac{1}{2}\int_{\R^{2d}}\psi^\top(s,X^{\epsilon,\nu}_s,\mc{L}(X^{\epsilon,\nu}_s))\sigma\sigma^\top(x,y,\mc{L}(X^{\epsilon,\nu}_s)):\partial_z\partial_\mu\Xi(X^{\epsilon,\nu}_s,Y^{\epsilon,\nu}_s,\mc{L}(X^{\epsilon,\nu}_s))[x]\mc{L}(X^{\epsilon,\nu}_s,Y^{\epsilon,\nu}_s)(dx,dy)\\
&+\frac{1}{2}\int_{\R^{2d}}\Xi^\top(X^{\epsilon,\nu}_s,Y^{\epsilon,\nu}_s,\mc{L}(X^{\epsilon,\nu}_s))\sigma\sigma^\top(x,y,\mc{L}(X^{\epsilon,\nu}_s)):\partial_z\partial_\mu\psi(s,X^{\epsilon,\nu}_s,\mc{L}(X^{\epsilon,\nu}_s))[x]\mc{L}(X^{\epsilon,\nu}_s,Y^{\epsilon,\nu}_s)(dx,dy)ds\\
D_5^\epsilon(t)&=  \epsilon \int_0^t \psi^\top(s,X^{\epsilon,\nu}_s,\mc{L}(X^{\epsilon,\nu}_s))\partial_y\Xi \tau_1 dW_s+ \epsilon \int_0^t \psi^\top(s,X^{\epsilon,\nu}_s,\mc{L}(X^{\epsilon,\nu}_s))\partial_y\Xi\tau_2 dB_s\\
D_6^\epsilon(t)&= \epsilon^2 \int_0^t \biggl[\psi^\top(s,X^{\epsilon,\nu}_s,\mc{L}(X^{\epsilon,\nu}_s))\partial_x\Xi+\Xi^\top\partial_x\psi(s,X^{\epsilon,\nu}_s,\mc{L}(X^{\epsilon,\nu}_s))\biggr]\sigma dW_s .
\end{align*}
Here argument for the coefficients and $\Xi$ and its derivatives where it is suppressed is $(X^{\epsilon,\nu}_s,Y^{\epsilon,\nu}_s,\mc{L}(X^{\epsilon,\nu}_s))$. By computations along the same lines as Proposition \ref{prop:fluctuationestimateparticles2}, we get there is $m\in\bb{N}$ such that:
\begin{align*}
&\biggl|\E\biggl[\int_0^t \biggl(F(X^{\epsilon,\nu}_s,Y^{\epsilon,\nu}_s,\mc{L}(X^{\epsilon,\nu}_s))-\bar{F}(X^{\epsilon,\nu}_s,\mc{L}(X^{\epsilon,\nu}_s))\biggr)\psi(s,X^{\epsilon,\nu}_s,\mc{L}(X^{\epsilon,\nu}_s))dt\biggr]\biggr|\\
&\leq C[\epsilon+(\epsilon+\epsilon^2)t](1+\sup_{t\in[0,T]}\E\biggl[|Y^{\epsilon,\nu}_t|^{2m} \biggr])\norm{\psi}_{\mc{M}_b^{\tilde{\bm{\zeta}}}([0,T]\times\R^d\times\mc{P}_2(\R^d);\R^k)}\\
&\leq C\epsilon (1+t)\norm{\psi}_{\mc{M}_b^{\tilde{\bm{\zeta}}}([0,T]\times\R^d\times\mc{P}_2(\R^d);\R^k)}
\end{align*}
by Lemma \ref{lemma:barYuniformbound} and the fact that $\epsilon\in (0,1]$.
\end{proof}
\section{On the Cauchy Problem \eqref{eq:W2CauchyProblem}}\label{sec:onthecauchyproblem}
With the results of Section \ref{section:mckeanvlasovergodictheorems}, we are almost ready  to use the Cauchy Problem on Wasserstein Space in order to prove our main result, Theorem \ref{theo:mckeanvlasovaveraging}. Before doing so, we need a final Lemma, which provides the needed existence, uniqueness, and regularity of solutions to Equation \eqref{eq:W2CauchyProblem}, and is a refinement of \cite{CST} Theorem 2.15:
\begin{lemma}\label{lem:cauchyproblemregularity}
Let $\tau\in [0,T]$. Assume \ref{assumption:uniformellipticity}-\ref{assumption:uniformellipticityDbar} and let $G\in \mc{M}_{b,L}^{\dot{\bm{\zeta}}}(\mc{P}_2(\R^d);\R),$ where $\dot{\bm{\zeta}}$ is as in Theorem \ref{theo:mckeanvlasovaveraging}.
Then $U(t,\mu) = G(\mc{L}(X^\mu_t))$ is the unique solution to the PDE \eqref{eq:W2CauchyProblem}, and $U\in \mc{M}_{b,L}^{\dot{\bm{\zeta}}}([0,\tau]\times\mc{P}_2(\R^d);\R)$, with
\begin{align}\label{eq:regularityofW2cauchyproblemintermsofIC}
\norm{U}_{\mc{M}_b^{\dot{\bm{\zeta}}}([0,\tau]\times\mc{P}_2(\R^d);\R)}\leq C(T)\norm{G}_{\mc{M}_b^{\dot{\bm{\zeta}}}(\mc{P}_2(\R^d);\R)}.
\end{align}
Here $C(T)$ is independent of $G$, and depends only on $T$ and $\norm{\bar{\gamma}}_{\mc{M}_b^{\hat{\bm{\zeta}}}(\R^d\times\mc{P}_2(\R^d);\R^d)},\norm{\bar{D}^{1/2}}_{\mc{M}_b^{\hat{\bm{\zeta}}}(\R^d\times\mc{P}_2(\R^d);\R^{d\times d})}$ and their Lipschitz constants, where $\hat{\bm{\zeta}}$ is as in Equation \eqref{eq:collectionsofmultiindices}.

Moreover, $\partial_\mu \dot{U}(s,\mu)[z],\partial_z\partial_\mu \dot{U}(s,\mu)[z],\partial^2_\mu \dot{U}(s,\mu)[z_1,z_2]$ all exist and are jointly continuous in $t,\mu,z_1,z_2$, and
\begin{align}\label{eq:mixedtimederivatives}
\sup_{t\in [0,\tau],z_1,z_2\in \R^d,\mu\in\mc{P}_2(\R^d)}\max \br{|\partial_\mu \dot{U}(t,\mu)[z_1]|,|\partial_z\partial_\mu \dot{U}(t,\mu)[z_1]|,|\partial^2_\mu \dot{U}(t,\mu)[z_1,z_2]|}&\leq C\sup_{t\in [0,\tau]}\norm{U(t,\cdot)}_{\mc{M}_b^{\dot{\bm{\zeta}}}(\mc{P}_2(\R^d);\R)} \\
&\leq C(T)\norm{G}_{\mc{M}_b^{\dot{\bm{\zeta}}}(\mc{P}_2(\R^d);\R)}\nonumber.
\end{align}

Recall here that the superscript $\mu\in \mc{P}_2(\R^d)$ is denoting that $X^\mu_t$ from Equation \eqref{eq:averagedMcKeanVlasov} is initialized at a random variable $\xi$ independent from $W^2$ with $\mc{L}(\xi)=\mu$. Thus, varying $\mu$ in $U(t,\mu)$ is varying the initial distribution of the McKean-Vlasov SDE \eqref{eq:averagedMcKeanVlasov}.
\end{lemma}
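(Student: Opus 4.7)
The plan is to derive this from \cite{CST} Theorem 2.15 combined with Proposition \ref{proposition:allneededregularity}, and then to obtain the mixed time-measure derivatives by directly differentiating the PDE \eqref{eq:W2CauchyProblem}. First I would apply Proposition \ref{proposition:allneededregularity} to conclude from \ref{assumption:uniformellipticity}-\ref{assumption:uniformellipticityDbar} that $\bar{\gamma}, \bar{D}^{1/2} \in \mc{M}_{b,L}^{\hat{\bm{\zeta}}}(\R^d \times \mc{P}_2(\R^d))$. This is exactly the coefficient regularity required by \cite{CST} Theorem 2.15 applied to the averaged McKean-Vlasov SDE \eqref{eq:averagedMcKeanVlasov}, yielding the probabilistic representation $U(t,\mu) = G(\mc{L}(X^\mu_t))$ as the unique classical solution to \eqref{eq:W2CauchyProblem} and placing $U \in \mc{M}_{b,L}^{\dot{\bm{\zeta}}}([0,\tau] \times \mc{P}_2(\R^d))$.

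Next, for the linear estimate \eqref{eq:regularityofW2cauchyproblemintermsofIC}, I would unpack the variational representations used in \cite{CST}: for any multi-index $(n,\bm{\beta}) \in \dot{\bm{\zeta}}$, the derivative $D^{(n,\bm{\beta})} U(t,\mu)[z_1,\ldots,z_n]$ is written as an expectation involving $D^{(k,\bm{\alpha})} G$ for $k \leq n$, evaluated at $\mc{L}(X^\mu_t)$ and composed with derivatives of the flow $\mu \mapsto X^\mu_t$ in its initial condition. Since $G$ enters these representations linearly once the flow derivatives are fixed, and those flow derivatives are governed by a hierarchy of variational equations whose coefficients are built solely from $\bar{\gamma}, \bar{D}^{1/2}$ and their derivatives, the resulting bound is linear in $\norm{G}_{\mc{M}_b^{\dot{\bm{\zeta}}}(\mc{P}_2(\R^d))}$ with a constant $C(T)$ that absorbs moments of the flow derivatives and thus depends only on $T$ and the $\mc{M}_{b,L}^{\hat{\bm{\zeta}}}$ norms of $\bar{\gamma}$ and $\bar{D}^{1/2}$.

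For the existence of the mixed derivatives $\partial_\mu \dot{U}, \partial_z \partial_\mu \dot{U}, \partial^2_\mu \dot{U}$ and the bound \eqref{eq:mixedtimederivatives}, I would exploit the PDE itself, which reads
\[
\dot{U}(t,\mu) = \int_{\R^d} \biggl[ \bar{\gamma}(z,\mu) \cdot \partial_\mu U(t,\mu)[z] + \bar{D}(z,\mu) : \partial_z \partial_\mu U(t,\mu)[z] \biggr] \mu(dz).
\]
Applying $\partial_\mu$, $\partial_z \partial_\mu$, and $\partial^2_\mu$ to this identity and using the product rule for Lions derivatives (together with Remark 5.27 of \cite{CD} for differentiating through the $\mu(dz)$ integration), each resulting quantity becomes an integral of products of derivatives of $\bar{\gamma}, \bar{D}$ in $\hat{\bm{\zeta}}$ and derivatives of $U$ in $\dot{\bm{\zeta}}$. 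Precisely this is why $\dot{\bm{\zeta}}$ was enlarged to include the multi-indices $(3,(0,0,j_4))$ for $j_4 = 0,1$: they are what is needed to express $\partial^2_\mu \dot{U}(t,\mu)[z_1,z_2]$ through $\partial^3_\mu U(t,\mu)[z_1,z_2,z]$ and $\partial_z \partial^3_\mu U(t,\mu)[z_1,z_2,z]$. Substituting the bound from the previous step then gives \eqref{eq:mixedtimederivatives}, and joint continuity is inherited from joint continuity of the constituent derivatives of $U$ and of the coefficients.

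The main obstacle I anticipate is the second step: as noted in Remark \ref{remark:ontheassumptions2}, the statement of \cite{CST} Theorem 2.15 is phrased for a slightly larger collection of multi-indices than our refined $\dot{\bm{\zeta}}$, which economizes on both the number of measure derivatives of $U$ and on the spatial derivatives admissible at the highest orders. Verifying that the inductive variational argument of \cite{CST} specializes correctly to this refined collection --- so that $\hat{\bm{\zeta}}$-regularity of $\bar{\gamma}, \bar{D}^{1/2}$ (strictly less than the fourth-order $(x,\mu,z)$-regularity a direct reading of \cite{CST} would require) suffices to close the induction at each level --- requires tracking the chain of variational SDEs carefully and checking that the minimal regularity entering at each level matches precisely what Proposition \ref{proposition:allneededregularity} delivers.
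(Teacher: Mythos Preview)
Your proposal is correct and follows essentially the same route as the paper: invoke Proposition \ref{proposition:allneededregularity} for the coefficient regularity, appeal to \cite{CST} Theorem 2.15 (with the base case from \cite{BLPR} Theorem 7.2) refined to the smaller collection $\dot{\bm{\zeta}}$ by tracking the variational hierarchy, observe linearity in the derivatives of $G$, and then differentiate the PDE \eqref{eq:W2CauchyProblem} itself in $\mu$ to obtain the mixed time-measure derivative bounds \eqref{eq:mixedtimederivatives}. Your identification of the main obstacle --- that \cite{CST} is stated for a coarser collection of multi-indices and one must redo the induction to check $\hat{\bm{\zeta}}$-regularity of the coefficients suffices for $\dot{\bm{\zeta}}$-regularity of $U$ --- matches exactly the point the paper flags.
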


\begin{proof}
First we note that indeed $\bar{\gamma}\in \mc{M}_{b,L}^{\hat{\bm{\zeta}}}(\R^d\times\mc{P}_2(\R^d);\R^d),\bar{D}^{1/2}\in \mc{M}_{b,L}^{\hat{\bm{\zeta}}}(\R^d\times\mc{P}_2(\R^d);\R^{d\times d})$ by Proposition \ref{proposition:allneededregularity}.

The representation for the solution on terms of $G$, uniqueness, and time differentiability is the subject of Theorem 7.2 in \cite{BLPR}. Note that by the flow property discussed in Remark \ref{remark:McKeanVlasovFlowProperty}, one can adapt the terminal condition formulation of the Cauchy Problem found in that paper to our form for the initial condition (see \cite{CM} Equation 1.2 and Theorem 5.8). \cite{CST} Theorem 2.15 outlines how to extend that theorem to higher derivatives, but there they do not track entirely what derivatives of the coefficients and initial condition are needed to control each specific derivative of $U$, and just write things in terms of the order of the derivatives of $U$ and the coefficients. We could directly apply this result, but that would be requiring $\bar{\gamma},\bar{D}^{1/2}$ have all mixed derivatives in $(x,\mu,z)$ of order 4 bounded and Lipschitz, and would imply that $U$ have all derivatives of order 4 in $(\mu,z)$ which are bounded and Lipschitz. We don't require order 4 differentiability of $U$ in $\mu$, and hence make a slight refinement.

If one repeats the computations of \cite{CST} in the full setting (using the proof of \cite{BLPR} Lemma 6.2 for guidance), we see $U\in \mc{M}_{b,L}^{\dot{\bm{\zeta}}}([0,\tau]\times\mc{P}_2(\R^d);\R)$ corresponds to the coefficients satisfying $\bar{\gamma}\in \mc{M}_{b,L}^{\hat{\bm{\zeta}}}(\R^d\times\mc{P}_2(\R^d);\R^d),\bar{D}^{1/2}\in \mc{M}_{b,L}^{\hat{\bm{\zeta}}}(\R^d\times\mc{P}_2(\R^d);\R^{d\times d})$. One should note that a small typo is made when going from (6.23) to (6.24) in \cite{BLPR}, and in (6.25) the terms multiplied by $\partial_x\partial_\mu \tilde{X}$ should have $\tilde{\xi}$ replaced by $z$, those multiplied by $\partial_x \bar{X}$ should have $\bar{\xi}$ replaced by $z$, and those multiplied by $\partial_x \tilde{X}$ should have $\tilde{\xi}$ replaced by $z$. It is also evident that the dependence is linear in terms of derivatives of the initial condition, even though neither of these results are framed as such. This establishes \eqref{eq:regularityofW2cauchyproblemintermsofIC}.

Finally, using that U solves \eqref{eq:W2CauchyProblem}, we can use a computation similar to Example 5 in Section 5.2.2 in \cite{CD} to see:
\begin{align*}
\partial_\mu\dot{U}(t,\mu)[z]& = (\partial_x\bar{\gamma})^\top(z,\mu)\partial_\mu U(t,\mu)[z]+(\partial_z\partial_\mu U)^\top(t,\mu)[z]\bar{\gamma}(z,\mu)+ \partial_z\partial_\mu U(\mu)[z]:\partial_x \bar{D}(z,\mu)\\
&+\bar{D}(z,\mu):\partial^2_z\partial_\mu U(\mu)[z]+\int_{\R^d}(\partial_\mu \bar{\gamma})^\top(y,\mu)[z]\partial_\mu U(t,\mu)[y]+ (\partial^2_\mu U(t,\mu))^\top[y,z]\bar{\gamma}(y,\mu) \\
&+ \partial_z\partial_\mu U(\mu)[y]:\partial_\mu\bar{D}(y,\mu)[z]+\bar{D}(y,\mu):\partial_{z_1}\partial^2_\mu U(\mu)[y,z]\mu(dy)\\
\end{align*}
and can arrive at similar representations for $\partial_z\partial_\mu\dot{U}(t,\mu)[z]$ and $\partial^2_\mu\dot{U}(t,\mu)[z_1,z_2]$, which yield the bounds \eqref{eq:mixedtimederivatives}.\end{proof}

Before providing the proof of Theorem \ref{theo:mckeanvlasovaveraging}, let us make a few observations about the role of the Equation \eqref{eq:W2CauchyProblem} and compare the proof method of this paper to that of \cite{RocknerFullyCoupled} Theorem 2.3, which uses the standard Cauchy Problem associated to SDEs (the backward Kolmogorov equation) to prove the analogous fully-coupled averaging result in the setting without measure dependence of the coefficients.

Therein, under sufficient regularity of the coefficients in the limiting equation and of the test function $\phi$, standard PDE results can be used to show that the solution of the backward Kolmogorov equation is in $C^{1,4}_b([0,T]\times\R^d)$, and that the rate of convergence is $\biggl|\E\biggl[\phi(X^\epsilon_t)- \phi(X_t) \biggr]\biggr|\leq C\epsilon \sup_{t\in[0,T]}[\norm{u(t,\cdot)}_{C^{4}_b(\R^d)}+\norm{\dot{u}(t,\cdot)}_{C^{2}_b(\R^d)}]$, where $C$ does not depend on $\phi$ or the initial condition of $X^\epsilon$ (here $X^\epsilon$ and $X$ are solutions of a standard SDEs, not McKean-Vlasov SDEs). Though this is not explicitly stated, due to the representation of solutions of the Cauchy problem as the initial condition integrated against the fundamental solution of the Cauchy problem, one can find that in fact $\sup_{t\in[0,T]}[\norm{u(t,\cdot)}_{C^{4}_b(\R^d)}+\norm{\dot{u}(t,\cdot)}_{C^{2}_b(\R^d)}]\leq \norm{\phi}_{C^{4}_b(\R^d)}$ (for standard PDE methods see e.g. \cite{LSU} Chapter 4 Sections 5 and 14 and \cite{Friedman} Chapter 9 Theorem 3, and for a probabilistic method see \cite{CerraiBook} Theorem 1.7.5 ). This is effectively our Corollary \ref{corollary:lineartestfunction} in the case of standard Fully-Coupled Slow-Fast SDEs.

In order to establish Corollary \ref{corollary:lineartestfunction}, one might be tempted to treat the coefficients of $X^\nu_t$ from Equation \eqref{eq:averagedMcKeanVlasov} as time dependent coefficients for a standard SDE, i.e. to let $\tilde{\gamma}(t,x) = \bar{\gamma}(x,\mc{L}(X^{\delta_x}_t))$ and $\tilde{D}(t,x) = \bar{D}(x,\mc{L}(X^{\delta_x}_t))$, and consider $\tilde{u}:[0,T]\times\R^d\tto \R$ solving
\begin{align*}
\dot{\tilde{u}}(t,x) &= \tilde{L}\tilde{u}(t,x)\coloneqq \tilde{\gamma}(t,x)\cdot\partial_x\tilde{u}(t,x)+\tilde{D}(t,x):\partial^2_x\tilde{u}(t,x), t\in (0,T],x\in\R^d,\\
u(0,x)& = \phi(x)
\end{align*}
and use the method of \cite{RocknerFullyCoupled} to establish a rate of convergence of $X^{\epsilon,\delta_x}$ to $X^{\delta_x}$. However, there are a few disadvantages to this approach compared when considering the convergence of McKean-Vlasov SDEs.

The first is that the PDE results regarding differentiability of the fundamental solution of the Cauchy problem depends on having regularity (differentiability and H\"older continuity in $x$ and uniform continuity in $t$) of the coefficients $\tilde{\gamma}(t,x),\tilde{D}(t,x)$. Since these coefficients are dependent implicitly on $\mc{L}(X^{\delta_x}_t)$, this would require understanding regularity properties of the Law of $X^{\delta_x}_t$, and understanding how these regularity properties transfer through the second argument of $\bar{\gamma}$ and $\bar{D}$, which a priori may have any form of dependence on their measure argument as long as it is smooth enough. Recently, such regularity was studied in \cites{CM,HW} and \cites{DF1,DF2} through the use of Malliavin Calculus and a parametrix method respectively. However, the first two results don't extend completely to cover the derivatives that we need since they are formed specifically for functions of the form $G_\phi(\mu)=\langle \mu,\phi\rangle$, and thus some extra work would be needed to use these for the required bounds on mixed derivatives in time and space of $\tilde{u}$. In addition, in the latter two, the authors are only interested in proving enough regularity in order to establish strong solutions of the Cauchy problem on Wasserstein Space (see Equation \eqref{eq:W2CauchyProblem} below) and to apply it to establish rates for the Propagation of Chaos for systems without multiscale structure (in other words they only need up to 2 derivatives in space, where we need 4). We should also mention \cite{Tse}, where such regularity is studied via an alternative method using the Linear Functional Derivative, and the associated McKean-Vlasov SDE has additive noise and is posed on the torus.

The other issues are related to the following remark regarding the ``flow'' property of solutions to McKean-Vlasov SDEs with respect to their initial conditions:
\begin{remark}\label{remark:McKeanVlasovFlowProperty}
It is important that we parameterize the initial condition in Equations \eqref{eq:slow-fastMcKeanVlasov} and \eqref{eq:averagedMcKeanVlasov} by $\nu\in\mc{P}_2(\R^d)$ rather than $x\in \R^d$, since due to the dependence of the coefficients on their law, McKean-Vlasov SDEs do not enjoy the same flow property on the reals as standard SDEs. In other words, while for $\tilde{X}^{\epsilon,x}$ corresponding to Equation \eqref{eq:slow-fastMcKeanVlasov} with all coefficients independent of $\mu$ (that is, when Equation \eqref{eq:slow-fastMcKeanVlasov} is a standard SDE and not a McKean-Vlasov SDE)  and deterministic initial condition $x\in\R^d$, $\tilde{X}^{\epsilon, y}_t\bigg|_{y=\tilde{X
}^{\epsilon,x}_s} = \tilde{X}^{\epsilon,x}_{s+t}$, in general $X^{\epsilon,\delta_y}_t\bigg|_{y=X^{\epsilon,\delta_x}_s} \neq X^{\epsilon,\delta_x}_{s+t}$, where here we mean equality in distribution. We do, however, have via strong existence and uniqueness (see Remark \ref{remark:ontheassumptions1}) that $X^{\epsilon, \bar{\nu}}_t\bigg|_{\bar{\nu}=\mc{L}(X^{\epsilon,\nu}_s)}= X^{\epsilon,\nu}_{s+t}$ for all $\nu\in\mc{P}_2(\R^d)$.  In other words, the flow property for solutions of the McKean-Vlasov SDE holds viewed as a flow of measures, not as a flow on $\R^d$.

To illustrate why one cannot hope to have a flow property on $\R^d$, we note that for a random variable $\eta\sim\nu$, $X^{\epsilon,\delta_x}_t\bigg|_{x=\eta}\neq X^{\epsilon,\nu}_t$. This becomes evident by setting all coefficients equal to $0$ but $c$, and letting $c(x,y,\mu)=c(\mu) = \langle \mu,\phi\rangle$ for some deterministic, non-constant $\phi:\R^d\tto \R^d$. Then we no longer have $\epsilon$ dependence, and
\begin{align*}
X^{\delta_x}_t &= x + \int_0^t \langle \mc{L}(X^{\delta_x}_s),\phi\rangle ds\\
& = x + \int_0^t \langle \delta_{X^{\delta_x}_s},\phi\rangle ds\\
& = x + \int_0^t \phi(X^{\delta_x}_s)ds
\end{align*}
is deterministic, with
\begin{align*}
X^{\delta_x}_t\bigg|_{x=\eta} = \eta + \int_0^t \phi(X^{\delta_x}_s)\bigg|_{x=\eta}ds,
\end{align*}
while
\begin{align*}
X^{\nu}_t & = \eta + \int_0^t \langle \mc{L}(X^{\nu}_s),\phi\rangle ds\\
& = \eta + \int_0^t \E[\phi(X^{\nu}_s)]ds,
\end{align*}
so
\begin{align*}
X^{\delta_x}_t\bigg|_{x=\eta} = X^{\nu}_t \Leftrightarrow \int_0^t \phi(X^{\delta_x}_s)\bigg|_{x=\eta}ds = \int_0^t \E[\phi(X^{\nu}_s)]ds,
\end{align*}
the left of which is a random variable and the right of which is deterministic. Thus, this is requiring that $\eta$ be deterministic to hold in general. Thus, via the aforementioned flow property on measures, $X^{\epsilon,\delta_x}_{s+t} = X^{\epsilon, \nu}_t\bigg|_{\nu=\mc{L}(X^{\epsilon,\delta_x}_s)} \neq X^{\epsilon, \delta_y}_t\bigg|_{y=X^{\epsilon,\delta_x}_s} $. See \cite{Wang} p.3-4 for a good further exposition on this, and how it relates to the non-linearity of the generator of McKean-Vlasov SDEs.
\end{remark}

This leads us to the next major disadvantage of using the standard Cauchy Problem over the Cauchy Problem on Wasserstein space for studying convergence of McKean-Vlasov SDEs. Where in the standard averaging case, we have for $\xi\sim \nu$,
\begin{align*}
\biggl|\E\biggl[\phi(\tilde{X}^{\epsilon,\xi}_t)- \phi(\tilde{X}^\xi_t) \biggr]\biggr|& =\biggl|\int_{\R^d}\E\biggl[\phi(\tilde{X}^{\epsilon,x}_t)- \phi(\tilde{X}^x_t) \biggr]\nu(dx)\biggr|\\
&\leq  \int_{\R^d}\biggl|\E\biggl[\phi(\tilde{X}^{\epsilon,x}_t)- \phi(\tilde{X}^x_t) \biggr]\biggr|\nu(dx)\\
&\leq \int_{\R^d}C\epsilon |\phi|_{C^{4}_b(\R^d)}\nu(dx) \\
&= C\epsilon |\phi|_{C^{4}_b(\R^d)},
\end{align*}
so the result established via the standard Cauchy Problem for deterministic initial conditions $x\in\R^d$ also holds in the case of random initial conditions. Meanwhile, for the McKean-Vlasov setting, due to the non-linearity of the Fokker-Plank equations associated to $X^{\epsilon,\nu}$ and $X^\nu$ (see p.4 in \cite{Wang}):
\begin{align*}
\E\biggl[\phi(X^{\epsilon,\nu}_t)- \phi(X^{\nu}_t) \biggr]\neq \int_{\R^d} \E\biggl[\phi(X^{\epsilon,\delta_{x}}_t)- \phi(X^{\delta_{x}}_t) \biggr]\nu(dx)
\end{align*}
in general, so the result does not have the same upshot, and a proof of convergence of $\mc{L}(X^{\epsilon,\delta_{x}}_t)$ to $\mc{L}(X^{\delta_{x}}_t)$ for each $x\in\R^d$, which is what would be obtained via the standard Cauchy problem, does not immediately lift to the case of random initial conditions.
Lastly, even assuming sufficient regularity on the coefficients to apply It\^o's formula to $\tilde{u}(t,X^{\epsilon,\delta_x}_t)$, we would end up needing to control terms of the form $[\bar{\gamma}(X^{\epsilon,\delta_x}_t,\mc{L}(X^{\delta_{y}}_t))|_{y=X^{\epsilon,\delta_x}_t}-\gamma(X^{\epsilon,\delta_x}_t,Y^{\epsilon,\delta_x}_t,\mc{L}(X^{\epsilon,\delta_x}_t))]\cdot\partial_x\tilde{u}(t,X^{\epsilon,\delta_x}_t)$ (see the last equation on p.1230 of \cite{RocknerFullyCoupled}) rather than the term \\$[\bar{\gamma}(X^{\epsilon,\delta_x}_t,\mc{L}(X^{\epsilon,\delta_x}_t))-\gamma(X^{\epsilon,\delta_x}_t,Y^{\epsilon,\delta_x}_t,\mc{L}(X^{\epsilon,\delta_x}_t))]\cdot \partial_\mu U(t,\mc{L}(X^{\epsilon,\delta_x}_t))[X^{\epsilon,\delta_x}_t]$. The latter is what appears when using the Cauchy-Problem on Wasserstein space (see the term $R_4$ in the proof of Theorem \ref{theo:mckeanvlasovaveraging} below), and is such that  we can get vanishing bounds in expectation on its time integral via auxiliary Poisson equation methods (see Proposition \ref{prop:llntypefluctuationestimate2}). The former would require some kind of artificial Lipschitz estimates for $\bar{\gamma}(X^{\epsilon,\delta_x}_t,\mc{L}(X^{\epsilon,\delta_x}_t))-\bar{\gamma}(X^{\epsilon,\delta_x}_t,\mc{L}(X^{\delta_x}_t))$ (note the difference in the measure argument). This is do to an effective decoupling of the evolution of the measure which appears in the coefficients $\tilde{\gamma},\tilde{D}$ from the process whose generator is given by the right hand side of the equation which $\tilde{u}$ satisfies. In fact, this is the very decoupling used in \cite{BLPR} Equation (3.2) in order to define the real-valued component of the evolution of the Cauchy Problem on Wasserstein space in the full setting.

These issues reflect that if we were going to attempt to use $\tilde{u}$ as above, we would be attempting to re-frame the McKean-Vlasov SDE \eqref{eq:averagedMcKeanVlasov} in a way so that we are considering it as a flow on $\R^d$, rather than $\mc{P}_2(\R^d)$. Using instead the Cauchy Problem on Wasserstein space, we are ``lifting'' our view of the process to a deterministic evolution of a measure on $\mc{P}_2(\R^d)$, that is, its Law. This is reflected by the fact that in Equation \eqref{eq:W2CauchyProblem}, we are using the infinitesimal generator of the strongly continuous semi-group $\mc{P}_t,t\geq 0$ on the space of uniformly bounded, uniformly continuous functions $G:\mc{P}_2(\R^d)\tto \R$ such that $[\mc{P}_tG](\mu) = G(\mc{L}(X^\mu_t))$, rather than the infinitesimal generator associated to the nonlinear semi-group $T_t,t\geq 0$ acting on functions $\phi\in C^2_b(\R^d)$ by $[T_t\phi](x)=\E[\phi(X^{\delta_x}_t)]$, as is the case for $\tilde{u}$ (see Section 5.7.4 in \cite{CD}). This viewpoint is also what allows us to consider possibly nonlinear test functions $G$, where if we were to use $\tilde{u}$, we would be restricting ourselves to linear interactions with the measure.

Other than the modification of using the Cauchy Problem on Wasserstein space, the main idea of the proof of Theorem \ref{theo:mckeanvlasovaveraging} is similar to that of \cite{RocknerFullyCoupled} Theorem 2.3. That is, we can re-express the distance between any sufficiently regular test function $G$ evaluated at the law of $X^{\epsilon,\nu}_\tau$ and the law of $X^\nu_\tau$ for a given time $\tau$ as the distance between the time-reversed solution $\tilde{U}(t,\mu;\tau)\coloneqq U(\tau-t,\mu)$ to Equation \eqref{eq:W2CauchyProblem} evaluated at $(t,\mu)=(\tau,\mc{L}(X^{\epsilon,\nu}_\tau))$ and $(0,\nu)$ respectively. Then we can apply It\^o's formula and use the Propositions of Section \ref{section:mckeanvlasovergodictheorems} to conclude this distance is $O(\epsilon)$, and to express it linearly in terms of Lions derivatives of $U$, and hence in terms of the test function $G$. Intuitively what is going on with this approach to rates of convergence in distribution is, via the representation for $U(t,\mu)$ provided in Lemma \ref{lem:cauchyproblemregularity}, for any fixed $\tau\in [0,T]$, $t\mapsto \tilde{U}(t,\mc{L}(X^\nu_t);\tau)$ is constant and equal to $U(\tau-t,\mc{L}(X^\nu_t))=G(\mc{L}(X^{\mc{L}(X^\nu_t)}_{\tau-t})) = G(\mc{L}(X^\nu_\tau)) = U(\tau,\nu)=\tilde{U}(0,\nu;\tau)$ for any $\nu\in\mc{P}_2(\R^d)$ and $t\in [0,\tau]$. Thus, the change of $t\mapsto\tilde{U}(t,\mc{L}(X^{\epsilon,\nu}_t);\tau)$ from its initial value $\tilde{U}(0,\nu;\tau)$ as $t$ varies from $0$ to $\tau$ is in a sense is measuring the distance of $\mc{L}(X^{\epsilon,\nu}_\tau)$ from $\mc{L}(X^\nu_\tau)$.
\section{Proof of Theorem \ref{theo:mckeanvlasovaveraging}}\label{section:proofofmainresults}
\begin{proof}
We use the Cauchy problem on Wasserstein Space defined in Equation \eqref{eq:W2CauchyProblem}.

Fix $G\in \mc{M}_{b,L}^{\dot{\bm{\zeta}}}(\mc{P}_2(\R^d);\R)$, $\tau \in (0,T]$ and consider the solution $U:[0,\tau]\times\mc{P}_2(\R^d)\tto \R$ to \eqref{eq:W2CauchyProblem}, with this choice of $G$.
Then, by Lemma \ref{lem:cauchyproblemregularity}, we have that $U$ is given uniquely by $U(t,\mu) = G(\mc{L}(X_t^\mu))$, where we recall here the superscript $\mu$ is denoting that $X^\mu_t$ is initialized with a random variable $\xi$ independent from $W^2$ such that $\mc{L}(\xi)=\mu$, and that $U\in\mc{M}_{b,L}^{\tilde{\bm{\zeta}}}([0,\tau]\times\mc{P}_2(\R^d);\R)$. Then, letting $\tilde{U}(t,\mu) = U(\tau-t,\mu)$ for $t\in [0,\tau]$, we have $\tilde{U}(\tau,\mu) = U(0,\mu)=G(\mu)$. Thus, for any initial distribution $\nu\in\mc{P}_2(\R^d)$:
\begin{align*}
\biggl|G(\mc{L}(X^{\epsilon,\nu}_\tau))- G(\mc{L}(X^\nu_\tau)) \biggr|& = \biggl|G(\mc{L}(X^{\epsilon,\nu}_\tau)) - U(\tau,\nu) \biggr|\\
& = \biggl| \tilde{U}(\tau,\mc{L}(X^{\epsilon,\nu}_\tau))- \tilde{U}(0,\nu)\biggr|.
\end{align*}
Note that $\tilde{U}$ is parameterized by the end time of the interval, $\tau$, but we suppress this in the notation for presentation purposes.

Now we can apply the Chain Rule for Measure Dependent Functions from Proposition 5.102 in \cite{CD} to express the above in terms of Lions derivatives of $\tilde{U}$ in a way that does not depend on $X^{\nu}$ or its Law. We get, using $\dot{\tilde{U}}(s,\nu) = -\dot{U}(\tau-s,\nu)$ and that $U$ satisfies \eqref{eq:W2CauchyProblem}:
\begin{align*}
\biggl|G(\mc{L}(X^{\epsilon,\nu}_\tau)) - G(\mc{L}(X^\nu_\tau)) \biggr|& = \biggl|\int_0^\tau \dot{\tilde{U}}(s,\mc{L}(X^{\epsilon,\nu}_s))\\
&+\E\biggl[\biggl[\frac{1}{\epsilon}b(X^{\epsilon,\nu}_s,Y^{\epsilon,\nu}_s,\mc{L}(X^{\epsilon,\nu}_s))+c(X^{\epsilon,\nu}_s,Y^{\epsilon,\nu}_s,\mc{L}(X^{\epsilon,\nu}_s))\biggr]\cdot \partial_\mu \tilde{U}(s,\mc{L}(X^{\epsilon,\nu}_s))[X^{\epsilon,\nu}_s] \\
&+ \frac{1}{2}\sigma\sigma^\top(X^{\epsilon,\nu}_s,Y^{\epsilon,\nu}_s,\mc{L}(X^{\epsilon,\nu}_s)):\partial_z\partial_\mu \tilde{U}(s,\mc{L}(X^{\epsilon,\nu}_s))[X^{\epsilon,\nu}_s]\biggr]ds \biggr|\\
& = \biggl|\int_0^\tau -\E\biggl[\bar{\gamma}(X^{\epsilon,\nu}_s,\mc{L}(X^{\epsilon,\nu}_s))\cdot\partial_\mu \tilde{U}(s,\mc{L}(X^{\epsilon,\nu}_s))[X^{\epsilon,\nu}_s] \\
&+ \bar{D}(X^{\epsilon,\nu}_s,\mc{L}(X^{\epsilon,\nu}_s)):\partial_z\partial_\mu \tilde{U}(s,\mc{L}(X^{\epsilon,\nu}_s))[X^{\epsilon,\nu}_s]\biggr]\\
&+\E\biggl[\biggl[\frac{1}{\epsilon}b(X^{\epsilon,\nu}_s,Y^{\epsilon,\nu}_s,\mc{L}(X^{\epsilon,\nu}_s))+c(X^{\epsilon,\nu}_s,Y^{\epsilon,\nu}_s,\mc{L}(X^{\epsilon,\nu}_s))\biggr]\cdot\partial_\mu \tilde{U}(s,\mc{L}(X^{\epsilon,\nu}_s))[X^{\epsilon,\nu}_s] \\
&+ \frac{1}{2}\sigma\sigma^\top(X^{\epsilon,\nu}_s,Y^{\epsilon,\nu}_s,\mc{L}(X^{\epsilon,\nu}_s)):\partial_z\partial_\mu \tilde{U}(s,\mc{L}(X^{\epsilon,\nu}_s))[X^{\epsilon,\nu}_s]\biggr]ds \biggr|\\
& = \biggl|R_1(\tau) + R_2(\tau)+R_3(\tau)+R_4(\tau)+R_5(\tau)\biggr|
\end{align*}
where
\begin{align*}
R_{1}(\tau)& = \int_0^\tau \E\biggl[\frac{1}{\epsilon}b(X^{\epsilon,\nu}_s,Y^{\epsilon,\nu}_s,\mc{L}(X^{\epsilon,\nu}_s))\cdot\partial_\mu \tilde{U}(s,\mc{L}(X^{\epsilon,\nu}_s))[X^{\epsilon,\nu}_s] -  \gamma_1(X^{\epsilon,\nu}_s,Y^{\epsilon,\nu}_s,\mc{L}(X^{\epsilon,\nu}_s))\cdot\partial_\mu \tilde{U}(s,\mc{L}(X^{\epsilon,\nu}_s))[X^{\epsilon,\nu}_s]\\
&-2D_1(X^{\epsilon,\nu}_s,Y^{\epsilon,\nu}_s,\mc{L}(X^{\epsilon,\nu}_s)):\partial_z\partial_\mu \tilde{U}(s,\mc{L}(X^{\epsilon,\nu}_s))[X^{\epsilon,\nu}_s]\\
&- \int_{\R^{2d}}(\partial_\mu \tilde{U})^\top(s,\mc{L}(X^{\epsilon,\nu}_s))[X^{\epsilon,\nu}_s]\partial_\mu \Phi(X^{\epsilon,\nu}_s,Y^{\epsilon,\nu}_s,\mc{L}(X^{\epsilon,\nu}_s))[x]b(x,y,\mc{L}(X^{\epsilon,\nu}_s))\mc{L}(X^{\epsilon,\nu}_s,Y^{\epsilon,\nu}_s)(dx,dy)\\
&- \int_{\R^{2d}}\Phi^\top(X^{\epsilon,\nu}_s,Y^{\epsilon,\nu}_s,\mc{L}(X^{\epsilon,\nu}_s))\partial^2_\mu \tilde{U}(s,\mc{L}(X^{\epsilon,\nu}_s))[X^{\epsilon,\nu}_s,x]b(x,y,\mc{L}(X^{\epsilon,\nu}_s))\mc{L}(X^{\epsilon,\nu}_s,Y^{\epsilon,\nu}_s)(dx,dy)\biggr]ds\\
R_{2}(\tau)& = \int_0^\tau \E\biggl[\int_{\R^{2d}}(\partial_\mu \tilde{U})^\top(s,\mc{L}(X^{\epsilon,\nu}_s))[X^{\epsilon,\nu}_s]\partial_\mu \Phi(X^{\epsilon,\nu}_s,Y^{\epsilon,\nu}_s,\mc{L}(X^{\epsilon,\nu}_s))[x]b(x,y,\mc{L}(X^{\epsilon,\nu}_s))\mc{L}(X^{\epsilon,\nu}_s,Y^{\epsilon,\nu}_s)(dx,dy) \biggr]ds\\
R_3(\tau)& = \int_0^\tau \E\biggl[\int_{\R^{2d}}\Phi^\top(X^{\epsilon,\nu}_s,Y^{\epsilon,\nu}_s,\mc{L}(X^{\epsilon,\nu}_s))\partial^2_\mu \tilde{U}(s,\mc{L}(X^{\epsilon,\nu}_s))[X^{\epsilon,\nu}_s,x]b(x,y,\mc{L}(X^{\epsilon,\nu}_s))\mc{L}(X^{\epsilon,\nu}_s,Y^{\epsilon,\nu}_s)(dx,dy) \biggr]ds\\
R_{4}(\tau) & = \int_0^\tau \E\biggl[\gamma(X^{\epsilon,\nu}_s,Y^{\epsilon,\nu}_s,\mc{L}(X^{\epsilon,\nu}_s))\cdot\partial_\mu \tilde{U}(s,\mc{L}(X^{\epsilon,\nu}_s))[X^{\epsilon,\nu}_s] \\
&-\int_{\R^d}\gamma(X^{\epsilon,\nu}_s,y,\mc{L}(X^{\epsilon,\nu}_s))\pi(dy;X^{\epsilon,\nu}_s,\mc{L}(X^{\epsilon,\nu}_s))\cdot\partial_\mu \tilde{U}(s,\mc{L}(X^{\epsilon,\nu}_s))[X^{\epsilon,\nu}_s]\biggr]ds \\
R_{5}(\tau) & =  \int_0^\tau \E\biggl[D(X^{\epsilon,\nu}_s,Y^{\epsilon,\nu}_s,\mc{L}(X^{\epsilon,\nu}_s)):\partial_z\partial_\mu \tilde{U}(s,\mc{L}(X^{\epsilon,\nu}_s))[X^{\epsilon,\nu}_s] \\
&- \int_{\R^d}D(X^{\epsilon,\nu}_s,y,\mc{L}(X^{\epsilon,\nu}_s))\pi(dy;X^{\epsilon,\nu}_s,\mc{L}(X_s)):\partial_z\partial_\mu \tilde{U}(s,\mc{L}(X^{\epsilon,\nu}_s))[X^{\epsilon,\nu}_s]\biggr]ds.
\end{align*}
Here the martingale terms from It\^o's formula vanish in expectation thanks to Lemma \ref{lemma:barYuniformbound}, and we use that $\partial_z\partial_\mu \tilde{U}(s,\mu)[z]$ is a symmetric matrix for any $\mu\in\mc{P}_2(\R^d), s\in [0,\tau],$ and $z\in\R^d$ (see \cite{CD} Corollary 5.89) to write:
\begin{align*}
\frac{1}{2}\sigma\sigma^\top(x,y,\mu): \partial_z\partial_\mu \tilde{U}(s,\mu)[z]&=D(x,y,\mu): \partial_z\partial_\mu \tilde{U}(s,\mu)[z]-[D_1(x,y,\mu)+D_1^\top(x,y,\mu)]:\partial_z\partial_\mu \tilde{U}(s,\mu)[z]\\
& = D(x,y,\mu): \partial_z\partial_\mu \tilde{U}(s,\mu)[z]-2D_1(x,y,\mu):\partial_z\partial_\mu \tilde{U}(s,\mu)[z].
\end{align*}

Applying Proposition \ref{prop:fluctuationestimateparticles2} with $\psi(s,x,\mu) = \partial_\mu \tilde{U}(s,\mu)[x]$,  we get, letting $\tilde{\bm{\zeta}}$ be as in the statement of that Proposition:
\begin{align*}
|R_1(\tau)|&\leq C\epsilon[1+\tau]\norm{(t,x,\mu)\mapsto \partial_\mu \tilde{U}(t,\mu)[x]}_{\mc{M}_b^{\tilde{\bm{\zeta}}}([0,\tau]\times\R^d\times\mc{P}_2(\R^d);\R^d)}\\
&\leq C\epsilon[1+\tau] \biggl[\sup_{t\in[0,\tau]}\norm{U(t,\cdot)}_{\mc{M}_b^{\dot{\bm{\zeta}}}(\mc{P}_2(\R^d);\R)}+\sup_{t\in [0,\tau],z\in \R^d,\mu\in\mc{P}_2(\R^d)}|\partial_\mu \dot{U}(t,\mu)[z]|\biggr]\\
&\leq \epsilon C(T) \norm{G}_{\mc{M}_b^{\dot{\bm{\zeta}}}(\mc{P}_2(\R^d);\R)}
\end{align*}
by Lemma \ref{lem:cauchyproblemregularity}.
Applying Proposition \ref{prop:purpleterm2} with $\psi(s,x,\mu) = \partial_\mu \tilde{U}(s,\mu)[x]$, we get in the same way:
\begin{align*}
|R_2(\tau)|\leq \epsilon C(T) \norm{G}_{\mc{M}_b^{\dot{\bm{\zeta}}}(\mc{P}_2(\R^d);\R)}.
\end{align*}

Applying Proposition \ref{prop:newpurpleterm} with $\psi(s,x,\mu) = \partial_\mu \tilde{U}(s,\mu)[x]$, letting $\tilde{\bm{\zeta}}_2$ be as in the statement of that Proposition, we get:
\begin{align*}
|R_3(\tau)|&\leq C\epsilon[1+\tau]\biggl[\sup_{t\in [0,\tau]}\norm{(x,\mu)\mapsto \partial_\mu \tilde{U}(t,\mu)[x]}_{\mc{M}_b^{\tilde{\bm{\zeta}}_2}(\R^d\times\mc{P}_2(\R^d);\R^d)}+\sup_{t\in [0,\tau],z_1,z_2\in \R^d,\mu\in\mc{P}_2(\R^d)}|\partial^2_\mu \dot{U}(t,\mu)[z_1,z_2]|\biggr]\\
&\leq C\epsilon[1+\tau] \biggl[\sup_{t\in[0,\tau]}\norm{U(t,\cdot)}_{\mc{M}_b^{\dot{\bm{\zeta}}}(\mc{P}_2(\R^d);\R)}+\sup_{t\in [0,\tau],z_1,z_2\in \R^d,\mu\in\mc{P}_2(\R^d)}|\partial^2_\mu \dot{U}(t,\mu)[z_1,z_2]|\biggr]\\
&\leq \epsilon C(T) \norm{G}_{\mc{M}_b^{\dot{\bm{\zeta}}}(\mc{P}_2(\R^d);\R)}
\end{align*}
by Lemma \ref{lem:cauchyproblemregularity}.

Applying Proposition \ref{prop:llntypefluctuationestimate2} with $\psi(s,x,\mu) = \partial_\mu \tilde{U}(s,\mu)[x]$, $F=\gamma$, and $k=d$, we get in the same way as for $R_1(\tau),R_2(\tau)$ that:
\begin{align*}
|R_4(\tau)|\leq \epsilon C(T) \norm{G}_{\mc{M}_b^{\dot{\bm{\zeta}}}(\mc{P}_2(\R^d);\R)}.
\end{align*}

Lastly, Proposition \ref{prop:llntypefluctuationestimate2} with $\psi(s,x,\mu) = \partial_z\partial_\mu \tilde{U}(s,\mu)[x]$, $F=D$, and $k=d\times d$, we get:
\begin{align*}
|R_5(\tau)|&\leq C\epsilon[1+\tau]\norm{(t,x,\mu)\mapsto \partial_z\partial_\mu \tilde{U}(t,\mu)[x]}_{\mc{M}_b^{\tilde{\bm{\zeta}}}([0,\tau]\times\R^d\times\mc{P}_2(\R^d);\R^{d\times d})}\\
&\leq C\epsilon[1+\tau] \biggl[\sup_{t\in[0,\tau]}\norm{U(t,\cdot)}_{\mc{M}_b^{\dot{\bm{\zeta}}}(\mc{P}_2(\R^d);\R)}+\sup_{t\in [0,\tau],z\in \R^d,\mu\in\mc{P}_2(\R^d)}|\partial_z\partial_\mu \dot{U}(t,\mu)[z]|\biggr]\\
&\leq \epsilon C(T) \norm{G}_{\mc{M}_b^{\dot{\bm{\zeta}}}(\mc{P}_2(\R^d);\R)}
\end{align*}
by Lemma \ref{lem:cauchyproblemregularity}.
Then, noting that these bounds are all uniform in $\tau\in [0,T]$, we have
\begin{align*}
\sup_{s\in [0,T]}\biggl|G(\mc{L}(X^{\epsilon,\nu}_s)) - G(\mc{L}(X^\nu_s)) \biggr|&\leq \epsilon C(T)\norm{G}_{\mc{M}_b^{\dot{\bm{\zeta}}}(\mc{P}_2(\R^d);\R)},
\end{align*}
as desired.

\end{proof}

\section{Conclusions and Future Work}\label{S:ConclusionsFutureWork}
In this paper we have derived an averaging principle for fully coupled McKean-Vlasov SDEs, along with an associated rate of weak convergence.

In this paper, due to limitations in the literature regarding regularity of solutions to the Cauchy Problem on Wasserstein space, we made strong assumptions on the regularity of the coefficients of Equation \eqref{eq:slow-fastMcKeanVlasov}. It is known that under weaker assumptions, similar rates of convergence can be derived in case of fully coupled standard SDEs (without coefficients which have explicit dependence on the law of the process). See Remarks \ref{remark:ontheassumptions2} and \ref{remark:nomeasuredependence} for further discussion of this. An interesting avenue of future research would be to see if the method proposed in this paper can be extended to weaker assumptions via improving the existing regularity results for PDEs of the type \eqref{eq:W2CauchyProblem}.

Another interesting extension would be to establish an averaging principle for fully-coupled SDEs in the setting where the coefficients the fast and slow process in Equation \eqref{eq:slow-fastMcKeanVlasov} depend on the law of the fast process, $\mc{L}(Y^{\epsilon,\nu}_t)$. In this setting, the solution of the Poisson Equation $\Phi$ whose derivatives appear in the coefficients of limiting equation \eqref{eq:averagedMcKeanVlasov} will have to solve a PDE on Wasserstein Space, since the generator obtained from considering the $O(1/\epsilon^2)$ terms from the generator of $(X^{\epsilon,\nu}_t,Y^{\epsilon,\nu}_t)$ and ``freezing'' the terms associated to the slow process will be that of a McKean-Vlasov SDE. See \cites{delgadino2020,XLLM,LWX} for related results in this direction.

\appendix
\section{Regularity of the Poisson Equations}\label{sec:regularityofthecellproblem}
Throughout this subsection we assume \ref{assumption:uniformellipticity} and \ref{assumption:retractiontomean}, and prove the needed regularity of the Poisson Equations \eqref{eq:cellproblemold},\eqref{eq:doublecorrectorproblem}, \eqref{eq:tildechi}, and \eqref{eq:driftcorrectorproblem} and the averaged coefficients from \eqref{eq:averagedlimitingcoefficients} in order for the results in Sections \ref{section:mckeanvlasovergodictheorems}, \ref{sec:onthecauchyproblem}, and \ref{section:proofofmainresults} to go through. The final result containing this needed regularity is Proposition \ref{proposition:allneededregularity}.

The proofs here are analogous to those found in \cite{RocknerFullyCoupled}. Thus, for brevity, we choose only to sketch the proofs and focus on the necessary additional steps which arise from the fact that we take derivatives in $\mc{P}_2(\R^d)$ in addition to standard spatial derivatives.\begin{lemma}\label{lemma:Ganguly1DCellProblemResult}
 Consider $B:\R^d\times\R^d\times\bb{P}_2(\R^d)\tto \R^k$ such that
\begin{align*}
\int_{\R^d}B(x,y,\mu)\pi(dy;x,\mu)=0,\forall x\in \R^d,\mu\in \mc{P}_2(\R^d),
\end{align*}
$B$ grows at most polynomially in $y$ uniformly in  $x,\mu$ as $|y|\tto\infty$, and $B$ is locally H\"older continuous in $y$ uniformly in $(x,\mu)$.
Then there exists a unique solution $u:\R^d\times\R^d\times\mc{P}_2(\R^d)\tto \R^k$ to
\begin{align*}
\mc{L}_{x,\mu}u_l(x,y,\mu)=B_l(x,y,\mu),l\in\br{1,...,k}
\end{align*}
$\int_{\R^d}u(x,y,\mu)\pi(dy;x,\mu)=0$, and $u$, $\partial_y u$, $\partial^2_y u$ are locally H\"older continuous in $y$ uniformly in $x,\mu$ and have at most polynomial growth as $|y|\tto \infty$.\end{lemma}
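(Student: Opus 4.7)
Since the variables $(x,\mu)$ enter Lemma \ref{lemma:Ganguly1DCellProblemResult} only as frozen parameters, my plan is to regard the statement as a claim about the linear elliptic PDE $\mc{L}_{x,\mu}u(\cdot)=B(x,\cdot,\mu)$ in the single variable $y$, with the parameters $(x,\mu)$ ranging over $\R^d\times\mc{P}_2(\R^d)$, and then to verify that all constants in the standard Poisson-equation machinery can be taken uniform in $(x,\mu)$. By Assumption \ref{assumption:uniformellipticity}, the second-order part $a(x,\cdot,\mu)$ is uniformly elliptic and H\"older in $y$ with constants independent of $(x,\mu)$; by \ref{assumption:retractiontomean} the drift $f(x,\cdot,\mu)$ is of at most linear growth and dissipative with constants independent of $(x,\mu)$. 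Under exactly these hypotheses, Proposition 1 of \cite{PV1} produces the unique invariant measure $\pi(\cdot;x,\mu)$, and the frozen diffusion
\begin{align*}
dY_t^{x,y,\mu} = f(x,Y_t^{x,y,\mu},\mu)\,dt+\tau_1(x,Y_t^{x,y,\mu},\mu)\,dW_t+\tau_2(x,Y_t^{x,y,\mu},\mu)\,dB_t
\end{align*}
started from $y$ is exponentially ergodic in suitable polynomial Lyapunov norms at a rate uniform in $(x,\mu)$.

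The first step is to invoke the classical Poisson-equation result (Theorem 1 of \cite{PV1} or the analogous statement in \cite{Veretennikov1987}) to conclude that, for each fixed $(x,\mu)$, the centered equation admits a unique $\pi(\cdot;x,\mu)$-centered classical solution with the probabilistic representation
\begin{align*}
u(x,y,\mu)=\int_0^\infty \E\bigl[B(x,Y_t^{x,y,\mu},\mu)\bigr]\,dt,
\end{align*}
where integrability of the integrand and the polynomial-in-$|y|$ bound $|u(x,y,\mu)|\leq C(1+|y|^q)$ both follow from the centering condition on $B$, its polynomial growth in $y$, and uniform exponential ergodicity, with $C,q$ independent of $(x,\mu)$. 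The second step is to apply interior Schauder estimates for the uniformly elliptic operator $\mc{L}_{x,\mu}$ on a fixed-radius ball $B_1(y_0)$ centered at an arbitrary $y_0\in\R^d$: the local $C^{2,\alpha}$ norm of $u(x,\cdot,\mu)$ on $B_{1/2}(y_0)$ is controlled by the local $C^{0,\alpha}$ norm of $B(x,\cdot,\mu)$ on $B_1(y_0)$ together with the sup-norm of $u(x,\cdot,\mu)$ on $B_1(y_0)$, with Schauder constants depending only on the (uniform) ellipticity and H\"older constants of the coefficients. This yields local H\"older continuity of $u,\nabla_y u,\nabla_y\nabla_y u$ in $y$ uniformly in $(x,\mu)$, and combined with the polynomial growth bound on $u$ itself produces polynomial-in-$|y_0|$ bounds on $\nabla_y u$ and $\nabla_y\nabla_y u$ as $|y_0|\to\infty$.

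The main delicacy, as I see it, is ensuring that the constants emerging from both the elliptic theory and the ergodicity estimates are genuinely uniform in the infinite-dimensional parameter $\mu\in\mc{P}_2(\R^d)$, since $\mc{P}_2(\R^d)$ is not locally compact and one cannot pass to a worst-case $(x,\mu)$ by a compactness argument. This is handled by the observation that all hypotheses in Assumptions \ref{assumption:uniformellipticity}--\ref{assumption:retractiontomean} and in the statement of the lemma (ellipticity constants, dissipativity constants, polynomial growth exponents, and local H\"older constants of $B$) are postulated uniformly in $(x,\mu)$, and the Schauder theory and the classical Poisson-equation theory have quantitative dependence on these inputs only. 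An equivalent route, closer to the method of \cite{RocknerFullyCoupled} that the authors say they follow and which generalizes cleanly to the higher-order measure-derivative statements appearing elsewhere in Appendix \ref{sec:regularityofthecellproblem}, is to differentiate the probabilistic representation above formally in $y$ and verify integrability using uniform polynomial moment bounds on $Y^{x,y,\mu}$ inherited from the dissipative structure of $f$.
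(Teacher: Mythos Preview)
Your proposal is correct and follows essentially the same approach as the paper: both defer to the Pardoux--Veretennikov/R\"ockner--Xie Poisson-equation theory, with your sketch spelling out the two ingredients (probabilistic representation plus interior Schauder estimates) that sit inside those references, and your remark on uniformity in $(x,\mu)$ is exactly the right point. The one item the paper chooses to emphasize that you do not mention explicitly is that \cite{RocknerFullyCoupled} Theorem~2.1 assumes \emph{bounded} drift, whereas here $f$ has linear growth; the paper handles this by invoking the transformation argument in \cite{EidelmanBook} Chapter~1 Section~5 to extend the needed transition-density estimates, though in your route via \cite{PV1} Theorem~1 this issue is already covered since Pardoux--Veretennikov work under a dissipativity condition allowing linear growth from the outset.
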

\begin{proof}
This follows as in the proof of Theorem 2.1 i) in \cite{RocknerFullyCoupled} after noting that the estimates on the transition density associated to $\mc{L}_{x,\mu}$ collected in Lemma 3 extend to our situation where the $f$ has linear growth in $y$ and $\mc{L}_{x,\mu}^*$ is dissipative via the transformation argument provided in \cite{EidelmanBook} Chapter 1 Section 5.
\end{proof}

\begin{lemma}\label{lemma:derivativetransferformulas}
Consider $h:\R^d\times\R^d\times\mc{P}_2(\R^d)\tto \R$. Suppose $h$ in grows at most polynomially in $y$ and is locally H\"older continuous uniformly in $x,\mu$. Then
\begin{align}\label{eq:muLipschitztransferformula}
&\int_{\R^d}h(x,y,\mu_1)\pi(dy;x,\mu_1) -\int_{\R^d}h(x,y,\mu_2)\pi(dy;x,\mu_2)  = \nonumber\\ &\hspace{5cm}=\int_{\R^d} h(x,y,\mu_1)-h(x,y,\mu_2)-[\mc{L}_{x,\mu_1}-\mc{L}_{x,\mu_2}]v(x,y,\mu_2)\pi(dy;x,\mu_1)
\end{align}
for all $x\in \R^d,\mu_1,\mu_2\in \mc{P}_2(\R^d)$,
and
\begin{align}\label{eq:xLipschitztransferformula}
&\int_{\R^d}h(x_1,y,\mu)\pi(dy;x_1,\mu) -\int_{\R^d}h(x_2,y,\mu)\pi(dy;x_2,\mu)  = \nonumber\\
&\hspace{5cm} = \int_{\R^d} h(x_1,y,\mu)-h(x_2,y,\mu)-[\mc{L}_{x_1,\mu}-\mc{L}_{x_2,\mu}]v(x_2,y,\mu)\pi(dy;x_1,\mu)
\end{align}
for all $x_1,x_2\in \R^d,\mu\in \mc{P}_2(\R^d)$.

In addition, consider $v$ solving
\begin{align}\label{eq:poissoneqfortransferformulas}
\mc{L}_{x,\mu}v(x,y,\mu)& = h(x,y,\mu)- \int_{\R^d} h(x,\bar{y},\mu)\pi(d\bar{y};x,\mu),
\end{align}
and $\mc{L}^{(k,j,\bm{\alpha}(\bm{p}_k))}_{x,\mu}[z_{\bm{p}_k}]$ is the differential operator acting on $\phi \in C^2_b(\R^d)$ by
\begin{align*}
\mc{L}^{(k,j,\bm{\alpha}(\bm{p}_k))}_{x,\mu}[z_{\bm{p}_k}]\phi(y)=D^{(k,j,\bm{\alpha}(\bm{p}_k))}f(x,y,\mu)[z_{\bm{p}_k}]\cdot\nabla\phi(y)+D^{(k,j,\bm{\alpha}(\bm{p}_k))}a(x,y,\mu)[z_{\bm{p}_k}]:\nabla^2\phi(y),
\end{align*}
where the inner products are taken in each $d$-dimensional component of the derivative matrices of $f$ and each $d\times d$-dimensional component of the derivative matrices of $a$.

Suppose that for some complete collection of multi-indices $\bm{\zeta}$ that $h\in \mc{M}_{p}^{\bm{\zeta}}(\R^d\times\R^d\times \mc{P}_2(\R^d);\R),f\in \mc{M}_{p}^{\bm{\zeta}}(\R^d\times\R^d\times \mc{P}_2(\R^d);\R^d),a\in \mc{M}_{p}^{\bm{\zeta}}(\R^d\times\R^d\times \mc{P}_2(\R^d);\R^{d\times d})$, and that $\partial_yv\in \mc{M}_{p}^{\bm{\zeta}'}(\R^d\times\R^d\times \mc{P}_2(\R^d);\R^d),\partial^2_yv\in \mc{M}_{p}^{\bm{\zeta}'}(\R^d\times\R^d\times \mc{P}_2(\R^d);\R^{d\times d})$, where $\bm{\zeta}'$ is obtained from removing any multi-indices which contain the maximal first and second values from $\bm{\zeta}$.Then for any multi-index $(n,l,\bm{\beta})\in\bm{\zeta}$:
\begin{align}\label{eq:derivativetransferformula}
&D^{(n,l,\bm{\beta})}\int_{\R^d} h(x,y,\mu)\pi(dy;x,\mu)[z_1,...,z_n]
 = \int_{\R^d}\left(D^{(n,l,\bm{\beta})}h(x,y,\mu)[z_1,...,z_n] - \right.\\
 &\hspace{2cm}\left.-\sum_{k=0}^n\sum_{j=0}^l\sum_{\bm{p}_k} C_{(\bm{p}_k,j,n,l)} \mc{L}^{(k,j,\bm{\alpha}(\bm{p}_k))}_{x,\mu}[z_{\bm{p}_k}] D^{(n-k,l-j,\bm{\alpha}(\bm{p}'_{n-k}))}v(x,y,\mu)[z_{\bm{p}'_{n-k}}]\right) \pi(dy;x,\mu)\nonumber
\end{align}
where here $\bm{p}_k\in \binom{\br{1,...,n}}{k}$ with $\bm{p}'_{n-k} = \br{1,...,n} \setminus \bm{p}_k$, for $\bm{p}_k = \br{p_1,...,p_k}$, the argument $[z_{\bm{p}_k}]$ denotes $[z_{p_1},...,z_{p_k}]$, and $\bm{\alpha}(\bm{p}_k)\in \bb{N}^k$ is determined by $\bm{\beta}=(\beta_1,...,\beta_n)$ by $\bm{\alpha}(\bm{p}_k) = (\alpha_1,...,\alpha_k)$, $\alpha_j = \beta_{p_j},j\in\br{1,...,k}$, and similarly for $\bm{\alpha}(\bm{p}'_{n-k})$. Also here $C_{(\bm{p}_0,0,n,l)}=0$, and $C_{(\bm{p}_k,j,n,l)}>0,C_{(\bm{p}_k,j,n,l)}\in\bb{N}$ for $(k,j)\in \bb{N}^2, (k,j)\neq (0,0).$  See Remark \ref{Rem:C_constants} for an iterative way to define the constants  $C_{(p,j,n,\ell)}$.
\end{lemma}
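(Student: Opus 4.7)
The two transfer formulas \eqref{eq:muLipschitztransferformula} and \eqref{eq:xLipschitztransferformula} follow by exploiting the Poisson equation \eqref{eq:poissoneqfortransferformulas} together with the invariance of $\pi$. Evaluating \eqref{eq:poissoneqfortransferformulas} at $(x,y,\mu_2)$ and integrating both sides against $\pi(dy;x,\mu_1)$ (and using that the right-hand side of the Poisson equation is constant in $y$) yields
$$\int_{\R^d} h(x,y,\mu_2)\pi(dy;x,\mu_2) = \int_{\R^d} h(x,y,\mu_2)\pi(dy;x,\mu_1) - \int_{\R^d}\mc{L}_{x,\mu_2}v(x,y,\mu_2)\pi(dy;x,\mu_1).$$
Under the polynomial growth assumed on $v,\nabla_y v,\nabla_y\nabla_y v$ and the exponential tail decay of $\pi(\cdot;x,\mu_1)$ afforded by \ref{assumption:uniformellipticity}-\ref{assumption:retractiontomean}, the invariance identity $\int \mc{L}_{x,\mu_1}v(x,y,\mu_2)\pi(dy;x,\mu_1)=0$ is legitimate; subtracting this null term rewrites $\mc{L}_{x,\mu_2}v$ as the desired difference $-(\mc{L}_{x,\mu_1}-\mc{L}_{x,\mu_2})v$. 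Subtracting from $\int h(x,y,\mu_1)\pi(dy;x,\mu_1)$ produces \eqref{eq:muLipschitztransferformula}; an identical argument with $x_1,x_2$ replacing $\mu_1,\mu_2$ gives \eqref{eq:xLipschitztransferformula}.

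For the derivative formula \eqref{eq:derivativetransferformula}, I proceed by induction on the order $n+l+|\bm{\beta}|$ of the multi-index within the complete collection $\bm{\zeta}$. The base case $(0,0,0)$ is tautological since $C_{(\bm{p}_0,0,0,0)}=0$ collapses the sum. For the two first-order cases $(1,0,0)$ and $(0,1,0)$, one divides \eqref{eq:muLipschitztransferformula} (resp.\ \eqref{eq:xLipschitztransferformula}) by the magnitude of a perturbation in $\mu$ (resp.\ $x$) and invokes dominated convergence; the polynomial-in-$y$ growth of $h,a,f,v$ together with their joint uniform H\"older continuity and the exponential tails of $\pi$ provides the required domination. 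The resulting expression has exactly one new term $\int \mc{L}^{(1,0,\bm{\alpha})}_{x,\mu}[z]\, v\, d\pi$, matching \eqref{eq:derivativetransferformula} at first order.

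For the inductive step, denote by $G^{(n,l,\bm{\beta})}$ the integrand on the right side of \eqref{eq:derivativetransferformula}, so the inductive hypothesis reads $D^{(n,l,\bm{\beta})}\int h\,d\pi = \int G^{(n,l,\bm{\beta})}\,d\pi$. Advancing by one more derivative amounts to applying the first-order case to this integral; the relevant auxiliary Poisson equation
$$\mc{L}_{x,\mu}w^{(n,l,\bm{\beta})}(x,y,\mu)[z_1,\dots,z_n] = G^{(n,l,\bm{\beta})}(x,y,\mu)[z_1,\dots,z_n] - \int G^{(n,l,\bm{\beta})}(x,\bar y,\mu)[z_1,\dots,z_n]\pi(d\bar y;x,\mu)$$
is solvable by Lemma \ref{lemma:Ganguly1DCellProblemResult}, and linearity together with the recursive definition of $v$ lets one write $w^{(n,l,\bm{\beta})}$ as a linear combination of lower-order derivatives $D^{(k',l',\bm{\beta}')}v$. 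Expanding the new $\partial_\mu$ (or $\nabla_x$) over each summand of $G^{(n,l,\bm{\beta})}$ via the Leibniz rule generates the combinatorics of \eqref{eq:derivativetransferformula}: a new auxiliary variable $z_{n+1}$ either decorates a derivative of the coefficients inside $\mc{L}^{(k,j,\bm{\alpha}(\bm{p}_k))}$ (sending $k\mapsto k+1$ and adding $z_{n+1}$ to $\bm{p}_k$) or decorates the remaining factor $D^{(n-k,l-j,\bm{\alpha}(\bm{p}'_{n-k}))}v$ (keeping $k$ fixed and extending $\bm{p}'_{n-k}$). Completeness of $\bm{\zeta}$ in the sense of Definition \ref{def:completemultiindex} is essential, since it guarantees every lower-order derivative of $v$ arising in the recursion lies in the assumed class $\bm{\zeta}'$ for $\nabla_y v,\nabla_y\nabla_y v$.

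The main obstacle is this last combinatorial bookkeeping: one must verify that the Leibniz expansion matches the subset-sum structure indexed by $\bm{p}_k\in\binom{\bm{\beta}}{k}$ and produces exactly the constants $C_{(\bm{p}_k,j,n,l)}$ asserted in \eqref{eq:derivativetransferformula}. This is the step where the iterative definition of the constants alluded to in Remark \ref{Rem:C_constants} is forced. The analytic ingredients (differentiation under the integral, passage to the limit in the transfer formulas, and solvability of the successive Poisson equations) are uniform across the induction thanks to Lemma \ref{lemma:Ganguly1DCellProblemResult} and the hypothesis $h,a,f\in\mc{M}_{p}^{\bm{\zeta}}$, $\nabla_y v,\nabla_y\nabla_y v\in\mc{M}_{p}^{\bm{\zeta}'}$, so once the combinatorial identification is in hand the recursion closes and \eqref{eq:derivativetransferformula} is established for every $(n,l,\bm{\beta})\in\bm{\zeta}$.
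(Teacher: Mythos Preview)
Your treatment of the transfer formulas \eqref{eq:muLipschitztransferformula}--\eqref{eq:xLipschitztransferformula} via the invariance identity $\int \mc{L}_{x,\mu_1}v(x,y,\mu_2)\,\pi(dy;x,\mu_1)=0$ is correct and in fact cleaner than the paper's treatment in general dimension (which defers to \cite{RocknerFullyCoupled}); the paper only gives this direct computation in the $d=1$ case.

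The gap is in how you extract the Lions derivative at first order. You write that for $(1,0,0)$ ``one divides \eqref{eq:muLipschitztransferformula} by the magnitude of a perturbation in $\mu$ and invokes dominated convergence.'' But $\partial_\mu$ is not a difference quotient in $\mc{P}_2(\R^d)$: it is a map $z\mapsto\partial_\mu u(\mu)[z]$ defined through the lifting $\tilde u(\tilde X)=u(\mc{L}(\tilde X))$ on $L^2(\tilde\W,\tilde\F,\tilde\Prob;\R^d)$, and one must first produce the Fr\'echet derivative of the lift and then identify its representing kernel. The paper handles this by fixing $\tilde X,\tilde Y$, setting $H(t,y)=\tilde h(x,y,\tilde X+t\tilde Y)$ (and similarly for $f,a,\pi$), so that the problem becomes a one-real-parameter family to which the known $x$-derivative transfer formula applies at $t=0$. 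This yields the Gateaux derivative of $\int\tilde h\,\tilde\pi$ in the direction $\tilde Y$; upgrading to Fr\'echet differentiability requires a separate continuity estimate, for which the paper introduces an extended transfer identity in the extra variable $z$ (their \eqref{eq:muandzLipschitztransferformula}) and uses it to show the candidate derivative is Lipschitz in $(\mu,z)$. Only then can one read off $\partial_\mu\bar h(\mu)[z]$.

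Your inductive scheme (solve the auxiliary Poisson equation for $w^{(n,l,\bm{\beta})}$, identify it by uniqueness with $D^{(n,l,\bm{\beta})}v$, and expand via Leibniz) matches the paper's, and your remark about completeness of $\bm{\zeta}$ is exactly the point. But each time the induction steps in the $\mu$-direction you must again go through the lift/Gateaux/Fr\'echet/identification cycle; the dominated-convergence shortcut you describe does not by itself give a Lions derivative. Filling this in is the main missing piece.
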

\begin{proof}

Note that by the current assumptions, the integrand on the right hand side of \eqref{eq:derivativetransferformula} grows at most polynomially in $|y|$ uniformly in $x,\mu$, and hence is integrable against $\pi$. This kind of ``transfer formula'' on the regularity of derivatives in the coefficients of averaged functions against an invariant measure is the subject of Lemmas 3.2 and 3.7 in \cite{RocknerFullyCoupled} and \cite{RocknerSPDE} respectively.

The result follows almost directly from the aforementioned Lemmas, taking a bit of care to account for the fact that we are dealing with Lions derivatives, and that our assumptions are a bit different than those found in those papers.
Namely, in \cite{RocknerFullyCoupled}, boundedness of $f$ is assumed. Tracking where this assumption is being used in Lemma 3.2 i), we see they come up only when employing the Equation (28) from \cite{PV2}. In the proof of that result, however, the boundedness assumption on the coefficients is only needed in order to obtain the regularity for the transition density in Proposition 2 of \cite{PV2}. Upon inspecting the results in the PDE literature that they are using, one can see that in fact the same regularity holds in our regime, where $f$ itself has linear growth but $\partial_y f$ is bounded (under the additional assumption of local H\"older continuity of derivatives of the coefficients in $y$ and dissipativity). In particular, the parametrix method used in Chapter 9 of \cite{Friedman} to prove the bounds collected as Proposition 2 in \cite{PV2} can be extended to the case of dissipative parabolic PDEs with growth in their coefficients. See \cite{EidelmanBook} Chapter 1 Section 5 (this result is also partially stated in terms of the transition density for SDEs as Theorem 4.1 in \cite{PavliotisSPA}).

Similarly, in Lemma 3.2 ii), the boundedness of $f$ is only being used to appeal to the expression given for $\mc{K}_2$ in Lemma 4.1 in \cite{RocknerHolderContinuous}, wherein this boundedness assumption again is only used for the same estimates on the transition density (collected as Lemma 3.3 there), so again by the extension provided by \cite{EidelmanBook} Chapter 1 Section 5, the result applies in our setting as well. Note that the expression for $\mc{K}_2$ in that paper is coming from the same computation as \cite{PV2} Equations (25) and (26), except without dividing by $h$.

Thus, by this discussion, Equation \eqref{eq:derivativetransferformula} holds in the case that $n=0$ (i.e. only derivatives in $x$) and \eqref{eq:xLipschitztransferformula}, follow directly from Lemma 3.2 in \cite{RocknerFullyCoupled}.

Once we know that the result holds for $n=0,l=1$, so
\begin{align}\label{eq:1xderivativetransfer}
D^{(0,1,0)}\biggl[\int_{\R^d}h(x,y,\mu)\pi(dy;x,\mu) \biggr]& = \int_{\R^d}D^{(0,1,0)}h(x,y,\mu)-\mc{L}^{(0,1,0)}_{x,\mu}v(x,y,\mu)\pi(dy;x,\mu),
\end{align}
we can reapply the result for one derivative to the functions $p_k(x,y,\mu)=\biggl[D^{(0,1,0)}h(x,y,\mu)-\mc{L}^{(0,1,0)}_{x,\mu}v(x,y,\mu)\biggr]_k,$ $k=1,...,d$. The result applies since by assumption, each $p_k$ is jointly continuous and grows at most polynomially in $y$ uniformly in $x$ and $\mu$.

Then
\begin{align}\label{eq:pkrepresentation}
D^{(0,1,0)}\biggl[\int_{\R^d}p_k(x,y,\mu)\pi(dy;x,\mu) \biggr] &= \int_{\R^d}D^{(0,1,0)}p_k(x,y,\mu)-\mc{L}^{(0,1,0)}_{x,\mu}w_k(x,y,\mu)\pi(dy;x,\mu)
\end{align}
where
\begin{align*}
\mc{L}_{x,\mu}w_k(x,y,\mu) = p_k(x,y,\mu)-\int_{\R^d}p_k(x,y,\mu)\pi(dy;x,\mu),k\in\br{1,...,d}.
\end{align*}

In other words,
\begin{align*}
\mc{L}_{x,\mu}w_k(x,y,\mu) &= \partial_{x_k}h(x,y,\mu)-\partial_{x_k}f(x,y,\mu)\cdot \partial_yv(x,y,\mu)-\partial_{x_k}a(x,y,\mu):\partial^2_y v(x,y,\mu)\\
& - \int_{\R^d}\partial_{x_k}h(x,y,\mu)-\partial_{x_k}f(x,y,\mu)\cdot \partial_yv(x,y,\mu)-\partial_{x_k}a(x,y,\mu):\partial^2_y v(x,y,\mu)\pi(dy;x,\mu)\\
& = \partial_{x_k}h(x,y,\mu)-\partial_{x_k}f(x,y,\mu)\cdot \partial_yv(x,y,\mu)-\partial_{x_k}a(x,y,\mu):\partial^2_y v(x,y,\mu)\\
&-\partial_{x_k}\biggl[\int_{\R^d}h(x,y,\mu)\pi(dy;x,\mu) \biggr],
\end{align*}
where in the second equality we used the expression \eqref{eq:1xderivativetransfer}.

But, differentiating the expression which $v$ satisfies \eqref{eq:poissoneqfortransferformulas} in $x_k$, we have
\begin{align*}
\mc{L}_{x,\mu}\partial_{x_k}v &= \partial_{x_k}h(x,y,\mu) - \partial_{x_k}\biggl[\int_{\R^d}h(x,y,\mu)\pi(dy;x,\mu) \biggr]-\partial_{x_k}f(x,y,\mu)\cdot \partial_yv(x,y,\mu)\\
&-\partial_{x_k}a(x,y,\mu):\partial^2_y v(x,y,\mu),
\end{align*}
and by the uniqueness granted by Lemma \ref{lemma:Ganguly1DCellProblemResult}, we have in fact that $w_k=\partial_{x_k}v$. By rewriting the expression \eqref{eq:pkrepresentation} in terms of $h$ and $v$, it reads:
\begin{align*}
&D^{(0,2,0)}\biggl[\int_{\R^d}h(x,y,\mu)\pi(dy;x,\mu) \biggr] \nonumber\\ &\qquad=\int_{\R^d}D^{(0,2,0)}h(x,y,\mu)-\mc{L}^{(0,2,0)}_{x,\mu}v(x,y,\mu) - 2\mc{L}^{(0,1,0)}_{x,\mu}D^{(0,1,0)}v(x,y,\mu)\pi(dy;x,\mu).
\end{align*}

In addition, since we take no derivatives in the expression \eqref{eq:muLipschitztransferformula}, the proof follows in the exact same way as when the difference is taken in the parameters which live in Euclidean space. In fact, it also holds that if $\hat{h}:\R^d\times\R^d\times \R^d\times \mc{P}_2(\R^d)\tto \R$ satisfies the same polynomial growth and H\"older continuity assumptions in $y$, then
\begin{align}\label{eq:muandzLipschitztransferformula}
&\int_{\R^d}\hat{h}(x,y,z_1,\mu_1)\pi(dy;x,\mu_1) -\int_{\R^d}\hat{h}(x,y,z_2,\mu_2)\pi(dy;x,\mu_2)\\
& =  \int_{\R^d} \hat{h}(x,y,z_1,\mu_1)-\hat{h}(x,y,z_2,\mu_2)-[\mc{L}_{x,\mu_1}-\mc{L}_{x,\mu_2}]\hat v(x,y,z_2,\mu_2)\pi(dy;x,\mu_1)\nonumber
\end{align}
where
\begin{align*}
\mc{L}_{x,\mu}\hat v(x,y,z,\mu) = \hat{h}(x,y,z,\mu)-\int_{\R^d}\hat{h}(x,y,z,\mu)\pi(dy;x,\mu).
\end{align*}
As we will see, this is useful for dealing with the Lions derivatives when proving Equation \eqref{eq:derivativetransferformula}.

In order to arrive a the full expression in Equation \eqref{eq:derivativetransferformula}, we first examine the case of $n=1$, $l,\beta=0$. Then the result will follow in the iterative manner which we just outlined above.

Let $\tilde{h},\tilde{f},\tilde{a},\tilde{\pi}$ be the lifted functions on some $L^2(\tilde\W,\tilde\F,\tilde\Prob;\R^d)$ as per Definition \ref{def:lionderivative}. For $\phi :L^2(\tilde\W,\tilde\F,\tilde\Prob;\R^d)\tto \R^k$ Gateaux differentiable, we denote by $D[\phi(\tilde{X});\tilde{Y}) = \lim_{t\downarrow 0}[\phi(\tilde{X}+t\tilde{Y})-\phi(\tilde{X})]/t$ the Gateaux derivative of $\phi$ at $\tilde{X}$ in the direction $\tilde{Y}$.

Fixing $x,\tilde{X},\tilde{Y}$, we can define $H(t,y) = \tilde{h}(x,y,\tilde{X}+t\tilde{Y}),F(t,y) = \tilde{f}(x,y,\tilde{X}+t\tilde{Y}),A(t,y) = \tilde{a}(x,y,\tilde{X}+t\tilde{Y}),\Pi(dy;t) =\tilde{\pi}(x,y,\tilde{X}+t\tilde{Y})$. Then, since $x$ and $\mu$ are only parameters, we have for all $t\in \R$ $\Pi(\cdot;t)$ is the unique invariant measure associated to the linear operator with first order coefficients given by $F(t,y)$ and second order coefficients given by $A(t,y)$, and $\int_{\R^d} H(t,y)\Pi(dy;t)=0$. By definition of the Lions derivative, we also have the derivative of the coefficients with respect to $t$ have at most polynomial growth in $y$ uniformly in $t$ by assumption. Then by Lemma 3.2 1) in \cite{RocknerFullyCoupled} with $\eta=1$, we have
\begin{align*}
\partial_t\biggl[\int_{\R} H(t,y)\Pi(dy;t) \biggr]=\int_{\R^d} \partial_tH(t,y)-(\partial_tF(t,y))\cdot V(t,y)-(\partial_tA(t,y)): V(t,y)\Pi(dy;t)
\end{align*}
where
\begin{align*}
F(t,y)\cdot \partial_y V(t,y)+A(t,y):\partial^2_y V(t,y) = H(t,y) - \int_{\R^d}H(t,\bar{y})\Pi(\bar{y};t).
\end{align*}

Thus, evaluating at $t=0$, we have:

\begin{align*}
D[\int_{\R^d}\tilde{h}(x,y,\tilde{X})\tilde{\pi}(y;x,\tilde{X})dy;\tilde{Y}] & = \int_{\R^d} D[\tilde{h}(x,y,\tilde{X});\tilde{Y}]  - D[\tilde{f}(x,y,\tilde{X});\tilde{Y}] \cdot \partial_y\tilde{v}(x,y,\tilde{X})\\
&-D[\tilde{a}(x,y,\tilde{X});\tilde{Y}]:\partial^2_y \tilde{v}(x,y,\tilde{X}) \pi(dy;x,\tilde{X})
\end{align*}
for all $x,\in \R^d,\tilde{X},\tilde{Y}\in L^2(\tilde\W,\tilde\F,\tilde\Prob;\R^d)$. Here
\begin{align*}
\tilde{\mc{L}}_{x,\tilde{X}}\tilde{v}(x,y,\tilde{X})\coloneqq\tilde{f}(x,y,\tilde{X})\cdot\partial_y\tilde{v}(x,y,\tilde{X})+\tilde{a}(x,y,\tilde{X}):\partial^2_y\tilde{v}(x,y,\tilde{X})& = \tilde{h}(x,y,\tilde{X})-\int_{\R^d}\tilde{h}(x,y,\tilde{X})\pi(dy;x,\tilde{X}).
\end{align*}

Now we note that by definition of the liftings and the Lions derivative:
\begin{align*}
D\biggl[\int_{\R^d}\tilde{h}(x,y,\tilde{X})\tilde{\pi}(y;x,\tilde{X})dy;\tilde{Y} \biggr]& = \tilde{\E}\biggl[\int_{\R^d} D\tilde{h}(x,y,\tilde{X})- D\tilde{f}(x,y,\tilde{X})\cdot\partial_y v(x,y,\mu)\\
&-D\tilde{a}(x,y,\tilde{X}):\partial^2_y v(x,y,\mu)\pi(dy;x,\mu)\cdot\tilde{Y}   \biggr]\\
&=\tilde{\E}\biggl[\int_{\R^d} \partial_\mu h(x,y,\mu)[\tilde{X}]- \partial_\mu f(x,y,\mu)[\tilde{X}]\cdot\partial_y v(x,y,\mu)\\
&-\partial_\mu a(x,y,\mu)[\tilde{X}]:\partial^2_yv(x,y,\mu)\pi(dy;x,\mu)\cdot\tilde{Y}   \biggr],
\end{align*}
where $D\phi(\tilde{X})$ is the Fr\'echet derivative of $\phi:L^2(\tilde\W,\tilde\F,\tilde\Prob;\R^d)\tto \R^k$ at $\tilde{X}$, and we denote the expectation with respect to $\tilde{\Prob}$ by $\tilde{\E}$.

Now it is useful to note that by assumption, the integrand on the right hand side of the above display is locally H\"older continuous and has at most polynomial growth in $y$ uniformly in $x,\mu,z$. So we have, letting $\hat{h}(x,y,z,\mu) = \partial_\mu h(x,y,\mu)[z]- \partial_\mu f(x,y,\mu)[z]\cdot\partial_y v(x,y,\mu)-\partial_\mu a(x,y,\mu)[z]:\partial^2_y v(x,y,\mu)$, the expression  (\ref{eq:muandzLipschitztransferformula}) holds with this choice of $\hat{h}$. By Lemma \ref{lemma:Ganguly1DCellProblemResult}, $\partial_y\hat{v}$ and $\partial^2_y\hat{v}$ grow at most polynomially in $y$ uniformly in $x,\mu,z$ (it makes no difference to add another space parameter). Thus, by the assumptions on the coefficients:
\begin{align*}
&\biggl|\int_{\R^d}\hat{h}(x,y,z_1,\mu_1)\pi(dy;x,\mu_1) -\int_{\R^d}\hat{h}(x,y,z_2,\mu_2)\pi(dy;x,\mu_2)\biggr|\\
& \leq  \int_{\R^d} |\hat{h}(x,y,z_1,\mu_1)-\hat{h}(x,y,z_2,\mu_2)|+|\mc{L}_{x,\mu_1}-\mc{L}_{x,\mu_2}]\hat v(x,y,z_2,\mu_2)|\pi(dy;x,\mu_1)\nonumber\\
&\leq C \int_{\R^d} [\bb{W}_2(\mu_1,\mu_2) + |z_1-z_2|](1+|y|^p)\pi(dy;x,\mu_1),\text{ for some }p\in\bb{N}\\
&\leq C[\bb{W}_2(\mu_1,\mu_2) + |z_1-z_2|].
\end{align*}
Then, taking $\br{\tilde{X}^N}_{N\in\bb{N}} \subset L^2(\tilde\W,\tilde\F,\tilde\Prob;\R^d)$ such that $\tilde{X}^N\tto \tilde{X}$ and $\tilde{X}^N\sim \mu^N,\tilde{X}\sim \mu$, we have

\begin{align*}
&\lim_{N\toinf}\sup_{\tilde{Y}:\norm{\tilde{Y}}_{L^2(\tilde\W,\tilde\F,\tilde\Prob;\R^d)}\neq 0}\biggl(\tilde{\E}\biggl[\int_{\R^d} \partial_\mu h(x,y,\mu^N)[\tilde{X}^N]- \partial_\mu f(x,y,\mu^N)[\tilde{X}^N]\cdot\partial_yv(x,y,\mu^N)\\
&\hspace{4cm}-\partial_\mu a(x,y,\mu^N)[\tilde{X}^N]:\partial^2_y v(x,y,\mu^N)\pi(dy;x,\mu^N)\cdot\tilde{Y}   \biggr] \nonumber\\
&- \tilde{\E}\biggl[\int_{\R^d} \partial_\mu h(x,y,\mu)[\tilde{X}]- \partial_\mu f(x,y,\mu)[\tilde{X}]\cdot \partial_y v(x,y,\mu)\\
&\hspace{4cm}-\partial_\mu a(x,y,\mu)[\tilde{X}]:\partial^2_y v(x,y,\mu)\pi(dy;x,\mu)\cdot\tilde{Y}   \biggr]\biggr)/ \norm{\tilde{Y}}_{L^2(\tilde\W,\tilde\F,\tilde\Prob;\R^d)}\\
&\leq \lim_{N\toinf}\tilde{\E}\biggl[\biggl|\int_{\R^d} \left\{\partial_\mu h(x,y,\mu^N)[\tilde{X}^N]- \partial_\mu f(x,y,\mu^N)[\tilde{X}^N]\cdot\partial_y v(x,y,\mu^N)-\right.\\
&\hspace{7cm}\left.-\partial_\mu a(x,y,\mu^N)[\tilde{X}^N]\partial^2_yv(x,y,\mu^N)\right\}\pi(y;x,\mu^N)   \\
&- \biggl\lbrace\partial_\mu h(x,y,\mu)[\tilde{X}]- \partial_\mu f(x,y,\mu)[\tilde{X}]\cdot\partial_yv(x,y,\mu)-\partial_\mu a(x,y,\mu)[\tilde{X}]:\partial^2_yv(x,y,\mu)\biggr\rbrace\pi(y;x,\mu)dy\biggr|^2  \biggr]^{1/2}\\
& \text{ by H\"older's inequality}\\
&\leq C\lim_{N\toinf} \biggl\lbrace\tilde{\E}\biggl[|\tilde{X}^N-\tilde{X}|^2\biggr]+\bb{W}_2(\mu^N,\mu)\biggr\rbrace \\
& = 0 .
\end{align*}
Then in fact, by, e.g. Proposition 3.2.15 in \cite{DM}, $\int_{\R^d}h(x,y,\tilde{X})\pi(y;x,\tilde{X})dy$ is Fr\'echet differentiable, and
\begin{align*}
\tilde{\E}\biggl[D\int_{\R^d}\tilde{h}(x,y,\tilde{X})\tilde{\pi}(y;x,\tilde{X})dy\cdot \tilde{Y}\biggr] &=  \tilde{\E}\biggl[\int_{\R^d} \partial_\mu h(x,y,\mu)[\tilde{X}]- \partial_\mu f(x,y,\mu)[\tilde{X}]\cdot\partial_y v(x,y,\mu)\\
&-\partial_\mu a(x,y,\mu)[\tilde{X}]:\partial^2_yv(x,y,\mu)\pi(dy;x,\mu)\cdot\tilde{Y}   \biggr].
\end{align*}

Since this holds for all $\tilde{Y}\in L^2(\tilde\W,\tilde\F,\tilde\Prob;\R^d)$, we have
\begin{align*}
D\int_{\R^d}\tilde{h}(x,y,\tilde{X})\tilde{\pi}(y;x,\tilde{X})dy& = \int_{\R^d} \partial_\mu h(x,y,\mu)[\tilde{X}]- \partial_\mu f(x,y,\mu)[\tilde{X}]\cdot\partial_y v(x,y,\mu)\\
&-\partial_\mu a(x,y,\mu)[\tilde{X}]:\partial^2_yv_{yy}(x,y,\mu)\pi(dy;x,\mu)
\end{align*}
$\tilde{\Prob}$-almost surely, for all $x\in\R^d$, $\tilde{X}\in L^2(\tilde\W,\tilde\F,\tilde\Prob;\R^d),\mu = \mc{L}(\tilde{X})$.

Then, by definition,
\begin{align*}
D^{(1,0,0)}\int_{\R^d}h(x,y,\mu)\pi(y;x,\mu)dy[z]& = \int_{\R^d} D^{(1,0,0)} h(x,y,\mu)[z]- \mc{L}^{(1,0,0)}_{x,\mu}[z]v(x,y,\mu)\pi(dy;x,\mu),
\end{align*}
for all $x\in\R^d,\mu \in \mc{P}_2(\R^d)$, and $\mu$- almost every $z\in \R^d$.

Then, using the same iterative argument as outlined above in the case of the $x$ derivatives both in $l$ and $n$ along with this same argument to handle the Fr\'echet derivatives, we arrive at the expression for the mixed derivatives given in \eqref{eq:derivativetransferformula}, with $\bm{\beta}=0$. Finally, differentiating the expression in $z_1,...,z_n$ according to $\bm{\beta}$ and using uniform integrability to pass the derivatives into the integral against $\pi$, we obtain the full expression.\end{proof}

\begin{remark}\label{Rem:C_constants}
The non-negative integers $C_{(\bm{p}_k,j,n,l)}$ in the statement of Lemma \ref{lemma:derivativetransferformulas} can be iteratively computed according to the following rules:
\begin{align*}
C_{(\bm{p}_0,0,0,0)}&=0.
\end{align*}
and to go up in $l$ (taking an $x$ derivative), we have for any $l,n\in\bb{N}$, $\bb{N}\ni j\leq l+1$, $\bb{N}\ni k\leq n$, and $\bm{p}_k\in\binom{\br{1,...,n}}{k}$:
\begin{align*}
C_{(\bm{p}_k,j,n,l+1)}& =
\begin{cases}
C_{(\bm{p}_k,l,n,l)}, &\text{ if }j=l+1\\
C_{(\bm{p}_k,0,n,l)}, &\text{ if }j=0\\
C_{(\bm{p}_k,1,n,l)}+1, &\text{ if }j=1 \\
C_{(\bm{p}_k,j-1,n,l)} +C_{(\bm{p}_k,j,n,l)}, &\text{ otherwise}
\end{cases}.
\end{align*}

To go up in $n$ (taking a measure derivative), we have for any $l,n\in\bb{N}$, $\bb{N}\ni j\leq l$, $\bb{N}\ni k\leq n+1$, and $\bm{p}_k\in\binom{\br{1,...,n+1}}{k}$
\begin{align*}
C_{(\bm{p}_k,j,n+1,l)}& =
\begin{cases}
C_{(\br{1,...,n},j,n,l)}&\text{ if }k=n+1\\
C_{(\bm{p}_0,j,n,l)}&\text{ if }k=0\\
\1_{\bm{p}_1 = \br{n+1}}+C_{(\bm{p}_1,j,n,l)}\1_{\bm{p}_1 \neq  \br{n+1}}&\text{ if }k=1\\
C_{(\bm{p}_{k}\setminus \br{n+1},j,n,l)} \1_{\br{n+1}\in\bm{p}_{k} } +C_{(\bm{p}_k,j,n,l)}\1_{\br{n+1}\not\in\bm{p}_{k}}, &\text{ otherwise}\\
\end{cases}.
\end{align*}
This can be seen from tracking the constants in the iterative argument outlined in the proof.
\end{remark}

In dimension $d=1$, one can actually strengthen Lemma \ref{lemma:derivativetransferformulas} using the available exact formula for the invariant distribution. In fact in $d=1$ we have the Lemma \ref{lemma:derivativetransferformulas_d1}.
\begin{lemma}\label{lemma:derivativetransferformulas_d1}
Consider the case where $h,f,a:\R\times\R\times\mc{P}_2(\R)\tto \R$, i.e., $d=1$. Then, the results of Lemma \ref{lemma:derivativetransferformulas} hold for $h$ that is jointly continuous in $(x,y,\mu)$ and grows at most polynomially in $y$ uniformly in $x,\mu$, and dropping the local H\"older continuity assumption from the Assumptions \ref{assumption:uniformellipticity} and \ref{assumption:retractiontomean} and the definition of $\mc{M}_p^{\bm{\zeta}}$ (that is, Equation \eqref{eq:LionsClassYLipschitz}).

\end{lemma}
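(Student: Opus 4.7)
The whole point of the one-dimensional case is that the invariant density and the corrector admit explicit closed-form expressions, which replaces every use of PDE regularity theory in the proof of Lemma \ref{lemma:derivativetransferformulas}. Specifically, for fixed $(x,\mu)$ set $M(y;x,\mu)=\exp\!\bigl(\int_0^y f(x,s,\mu)/a(x,s,\mu)\,ds\bigr)$; then the unique invariant probability density for $\mc{L}_{x,\mu}=f\partial_y+a\partial_y^2$ is
\[
\pi(y;x,\mu)=\frac{M(y;x,\mu)}{Z(x,\mu)\,a(x,y,\mu)}, \qquad Z(x,\mu)=\int_{\R}\frac{M(y;x,\mu)}{a(x,y,\mu)}\,dy,
\]
and any centered right-hand side $H(y)=h(x,y,\mu)-\langle h(x,\cdot,\mu),\pi(\cdot;x,\mu)\rangle$ yields the classical solution
\[
v(y;x,\mu)=\int_0^y\frac{1}{M(u;x,\mu)}\int_{-\infty}^u\frac{M(s;x,\mu)}{a(x,s,\mu)}H(s)\,ds\,du+c(x,\mu),
\]
with $c(x,\mu)$ chosen so that $\langle v,\pi\rangle=0$. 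The first step of the proof is therefore to write down these representations, verify by direct substitution that they satisfy the equations of Lemma \ref{lemma:Ganguly1DCellProblemResult}, and deduce that continuity (without Hölder regularity) of $h,f,a$ together with polynomial growth in $y$ already suffices for $v$ to be $C^2$ in $y$ with $v, \partial_y v, \partial_y^2 v$ of polynomial growth.

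\textbf{Step 2.} Differentiate the explicit expressions for $\pi$ and $v$ in $(x,\mu,z_1,\dots,z_n)$ by interchanging derivatives and integrals via dominated convergence. The $\mc{M}_p^{\bm{\zeta}}$-membership of $f,a,h$ supplies joint continuity of the integrand and polynomial-in-$y$ envelopes that depend only on the moduli of continuity of the coefficients in $(x,\mu,z)$, which are precisely what the dominated convergence theorem needs. This is the one place where the parametrix-based argument used in higher dimensions (Chapter 9 of Friedman and \cite{EidelmanBook}) is being short-circuited: we never need to differentiate a fundamental solution, only ordinary integrals, so the local Hölder continuity in $y$ is no longer required. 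Lions differentiability is handled as in the proof of Lemma \ref{lemma:derivativetransferformulas}, by lifting to $L^2(\tilde\W,\tilde\F,\tilde\Prob;\R)$, computing the Gateaux derivative of the explicit formulas, and then passing to Fréchet derivatives using the same uniform-integrability and Lipschitz-in-$(\mu,z)$ estimates that the polynomial growth permits.

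\textbf{Step 3.} Once derivatives of $\pi$ and of $v$ are available as integrals of the differentiated explicit formulas, the identities \eqref{eq:muLipschitztransferformula}, \eqref{eq:xLipschitztransferformula}, \eqref{eq:muandzLipschitztransferformula}, and the main recursion \eqref{eq:derivativetransferformula} are derived by exactly the same bookkeeping induction as in Lemma \ref{lemma:derivativetransferformulas}: differentiating $\int h\,\pi$ once produces a term $\int \partial_\xi h\,\pi$ plus a term of the form $\int\partial_\xi(\mc{L}_{x,\mu})v\,\pi$ (for any auxiliary variable $\xi\in\{x_j,z_j\}$ or a Lions direction), and re-applying the same step to the new centered integrand generates the combinatorial factors $C_{(\bm{p}_k,j,n,l)}$ with exactly the iterative definition of Remark \ref{Rem:C_constants}. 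No new combinatorics appear; only the justification of each single differentiation changes.

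\textbf{Main obstacle.} The delicate point is verifying that dropping \ref{assumption:uniformellipticity} and \ref{assumption:retractiontomean} still leaves the explicit formulas well-defined and the resulting $\pi$ a genuine probability measure with all needed moments finite. In the higher-dimensional proof those assumptions were load-bearing for the existence of $\pi$ and for the solvability of the Poisson equation; in $d=1$ the closed form makes existence transparent whenever $Z(x,\mu)<\infty$ and the inner integral defining $v$ converges, and these conditions must be the substitute hypotheses used wherever Lemma \ref{lemma:derivativetransferformulas_d1} is invoked. The bulk of the proof is thus the careful verification, assumption-by-assumption, that every quantity appearing in the transfer formula \eqref{eq:derivativetransferformula} is finite, jointly continuous in $(x,\mu,z)$, and admits polynomial-in-$y$ envelopes, so that the induction can be completed without appealing to the Hölder-regularity hypotheses of Lemma \ref{lemma:derivativetransferformulas}.
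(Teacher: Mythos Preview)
Your proposal is correct and follows essentially the same route as the paper: both exploit the explicit one-dimensional formulas for $\pi$ and $v$ (the paper writes them as Equations \eqref{eq:explicit1Dpi}--\eqref{eq:explicit1Dv}) to bypass the parametrix estimates on the fundamental solution, then verify the base case of \eqref{eq:derivativetransferformula} by direct computation and run the same induction as in Lemma \ref{lemma:derivativetransferformulas}. The paper's verification of the $n=0,l=1$ base case and of \eqref{eq:muLipschitztransferformula} is more hands-on than your outline---it shows $\int h\,\pi_x\,dy=-\int\mc{L}^{(0,1,0)}_{x,\mu}v\,\pi\,dy$ by an explicit change-of-order-of-integration argument, and proves \eqref{eq:muLipschitztransferformula} by integrating by parts to obtain $\int\mc{L}_{x,\mu_1}v(x,y,\mu_2)\,\pi(dy;x,\mu_1)=0$---but these are exactly the computations your dominated-convergence plan in Step 2 and the ``same bookkeeping'' of Step 3 would unfold into.
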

\begin{proof}
The essential idea of this argument is that appealing to Equation (28) from \cite{PV2} and the expression given for $\mc{K}_2$ in Lemma 4.1 in \cite{RocknerHolderContinuous} is unnecessary, and we can instead use a direct argument using the explicit form of $\pi$ and $v$ that we have in the 1D situation (see \cite{GS} p. 105). Thus the use of local H\"older continuity in order to establish bounds on the fundamental solution of the Cauchy problem associated to $\mc{L}_{x,\mu}$ is unnecessary.

We have (with some abuse of notation also denoting the density of $\pi$ by $\pi$):
\begin{align}
\label{eq:explicit1Dpi}\pi(y;x,\mu)& = \frac{Z(x,\mu)}{a(x,y,\mu)}\exp\biggl(\int_0^y \frac{f(x,\bar{y},\mu)}{a(x,\bar{y},\mu)}d\bar{y}\biggr)\\
\label{eq:explicit1Dv}v(x,y,\mu)& = \int_{-\infty}^y \frac{1}{a(x,\bar{y},\mu)\pi(\bar{y};x,\mu)}\biggl[\int_{-\infty}^{\bar{y}}h(x,\tilde{y},\mu)\pi(\tilde{y};x,\mu)d\tilde{y} - \bar{h}(x,\mu)\int_{-\infty}^{\bar{y}}\pi(\tilde{y};x,\mu)d\tilde{y}\biggr]d\bar{y}
\end{align}
where $Z^{-1}(x,\mu)\coloneqq \int_{\R}\frac{1}{a(x,y,\mu)}\exp\biggl(\int_0^y \frac{f(x,\bar{y},\mu)}{a(x,\bar{y},\mu)}d\bar{y}\biggr)dy$ is the normalizing constant, and we use $\bar{h}(x,\mu)\coloneqq \int_{\R}h(x,\bar{y},\mu)\pi(\bar{y};x,\mu)d\bar{y}$. In the proof we will at times use the subscript notation e.g. $f_x$ rather than $\partial_x f$ to denote partial derivatives for presentation purposes.

First we see that Equation \eqref{eq:derivativetransferformula} holds with $n=0$ and $l=1$.

Using the dominated convergence theorem to pass derivatives inside the integrals where necessary, we have
\begin{align*}
\partial_x\int_{\R} h(x,y,\mu)\pi(y;x,\mu)dy & = \int_{\R}h_x(x,y,\mu)\pi(y;x,\mu)dy +\int_{\R}h(x,y,\mu)\pi_x(y;x,\mu)dy\\
\pi_x(y;x,\mu)& = \pi(y;x,\mu)\biggl[\int_0^y \frac{1}{a(x,\bar{y},\mu)}[f_x(x,\bar{y},\mu)-f(x,\bar{y},\mu)a_x(x,\bar{y},\mu)/a(x,\bar{y},\mu)]d\bar{y}+\frac{Z_x(x,\mu)}{Z(x,\mu)}\\
&\hspace{8cm}-\frac{a_x(x,y,\mu)}{a(x,y,\mu)} \biggr]\\
v_y(x,y,\mu)& = \frac{1}{a(x,y,\mu)\pi(y;x,\mu)}\biggl[\int_{-\infty}^y h(x,\bar{y},\mu)\pi(\bar{y};x,\mu)d\bar{y}-\bar{h}(x,\mu)\int_{-\infty}^y \pi(\bar{y};x,\mu)d\bar{y}\biggr]\\
v_{yy}(x,y,\mu)& = -\frac{f(x,y,\mu)}{a^2(x,y,\mu)\pi(y;x,\mu)}\biggl[\int_{-\infty}^y h(x,\bar{y},\mu)\pi(\bar{y};x,\mu)d\bar{y}-\bar{h}(x,\mu)\int_{-\infty}^y \pi(\bar{y};x,\mu)d\bar{y}\biggr]\\
&+\frac{h(x,y,\mu)-\bar{h}(x,\mu)}{a(x,y,\mu)}.
\end{align*}
We need to establish that
\begin{align*}
\int_{\R}h(x,y,\mu)\pi_x(y;x,\mu)dy & = -\int_{\R} \mc{L}^{(0,1,0)}_{x,\mu}v(x,y,\mu)\pi(y;x,\mu)dy
\end{align*}
so
\begin{align*}
&\frac{Z_x(x,\mu)}{Z(x,\mu)}\bar{h}(x,\mu)+\int_{\R}h(x,y,\mu)\biggl[\int_0^y \frac{f_x(x,\bar{y},\mu)-f(x,\bar{y},\mu)a_x(x,\bar{y},\mu)/a(x,\bar{y},\mu)}{a(x,\bar{y},\mu)}d\bar{y}-\frac{a_x(x,y,\mu)}{a(x,y,\mu)} \biggr]\pi(y;x,\mu)dy\\
&= \int_{\R}\frac{f_x(x,y,\mu)-f(x,y,\mu)a_x(x,y,\mu)/a(x,y,\mu)}{a(x,y,\mu)}\biggl[\bar{h}(x,\mu)\int_{-\infty}^y \pi(\bar{y};x,\mu)d\bar{y}-\int_{-\infty}^y h(x,\bar{y},\mu)\pi(\bar{y};x,\mu)d\bar{y}\biggr]\\
&+a_x(x,y,\mu)\frac{\bar{h}(x,\mu)-h(x,y,\mu)}{a(x,y,\mu)}\pi(y;x,\mu)dy.
\end{align*}
Noting the last terms are already the same, we can just establish
\begin{align*}
&\frac{Z_x(x,\mu)}{Z(x,\mu)}\bar{h}(x,\mu)+\int_{\R}h(x,y,\mu)\int_0^y \frac{f_x(x,\bar{y},\mu)-f(x,\bar{y},\mu)a_x(x,\bar{y},\mu)/a(x,\bar{y},\mu)}{a(x,\bar{y},\mu)}d\bar{y} \pi(y;x,\mu)dy\\
&= \int_{\R}\frac{f_x(x,y,\mu)-f(x,y,\mu)a_x(x,y,\mu)/a(x,y,\mu)}{a(x,y,\mu)}\biggl[\bar{h}(x,\mu)\int_{-\infty}^y \pi(\bar{y};x,\mu)d\bar{y}-\int_{-\infty}^y h(x,\bar{y},\mu)\pi(\bar{y};x,\mu)d\bar{y}\biggr]\\
&+\bar{h}(x,\mu)\frac{a_x(x,y,\mu)}{a(x,y,\mu)}\pi(y;x,\mu)dy.
\end{align*}

We work from the first term and get to the second. Changing the order of integration, we have
\begin{align*}
&\frac{Z_x(x,\mu)}{Z(x,\mu)}\bar{h}(x,\mu)+\int_{\R}h(x,y,\mu)\int_0^y \frac{f_x(x,\bar{y},\mu)-f(x,\bar{y},\mu)a_x(x,\bar{y},\mu)/a(x,\bar{y},\mu)}{a(x,\bar{y},\mu)}d\bar{y} \pi(y;x,\mu)dy \\
&= \frac{Z_x(x,\mu)}{Z(x,\mu)}\bar{h}(x,\mu)+\int_0^\infty \frac{f_x(x,\bar{y},\mu)-f(x,\bar{y},\mu)a_x(x,\bar{y},\mu)/a(x,\bar{y},\mu)}{a(x,\bar{y},\mu)}\int_{\bar{y}}^\infty h(x,y,\mu) \pi(y;x,\mu)dy d\bar{y} \\
&- \int_{-\infty}^0 \frac{f_x(x,\bar{y},\mu)-f(x,\bar{y},\mu)a_x(x,\bar{y},\mu)/a(x,\bar{y},\mu)}{a(x,\bar{y},\mu)}\int_{-\infty}^{\bar{y}} h(x,y,\mu) \pi(y;x,\mu)dy d\bar{y}.
\end{align*}
Then using
\begin{align*}
\int_{\bar{y}}^\infty h(x,y,\mu) \pi(y;x,\mu)dy &= \bar{h}(x,\mu) - \int_{-\infty}^{\bar{y}} h(x,y,\mu) \pi(y;x,\mu)dy
\end{align*}
we can continue
\begin{align*}
&= \frac{Z_x(x,\mu)}{Z(x,\mu)}\bar{h}(x,\mu)+\int_0^\infty \frac{f_x(x,\bar{y},\mu)-f(x,\bar{y},\mu)a_x(x,\bar{y},\mu)/a(x,\bar{y},\mu)}{a(x,\bar{y},\mu)}[\bar{h}(x,\mu) - \int_{-\infty}^{\bar{y}} h(x,y,\mu) \pi(y;x,\mu)dy] d\bar{y} \\
&- \int_{-\infty}^0 \frac{f_x(x,\bar{y},\mu)-f(x,\bar{y},\mu)a_x(x,\bar{y},\mu)/a(x,\bar{y},\mu)}{a(x,\bar{y},\mu)}\int_{-\infty}^{\bar{y}} h(x,y,\mu) \pi(y;x,\mu)dy d\bar{y}\\
& = \frac{Z_x(x,\mu)}{Z(x,\mu)}\bar{h}(x,\mu)+\bar{h}(x,\mu) \int_0^\infty \frac{f_x(x,\bar{y},\mu)-f(x,\bar{y},\mu)a_x(x,\bar{y},\mu)/a(x,\bar{y},\mu)}{a(x,\bar{y},\mu)}d\bar{y}  \\
&- \int_{\R} \frac{f_x(x,\bar{y},\mu)-f(x,\bar{y},\mu)a_x(x,\bar{y},\mu)/a(x,\bar{y},\mu)}{a(x,\bar{y},\mu)}\int_{-\infty}^{\bar{y}} h(x,y,\mu) \pi(y;x,\mu)dy d\bar{y}.
\end{align*}
Then using that $\pi$ integrates to $1$, we have
\begin{align*}
& = \frac{Z_x(x,\mu)}{Z(x,\mu)}\bar{h}(x,\mu)+\bar{h}(x,\mu) \int_0^\infty \frac{f_x(x,\bar{y},\mu)-f(x,\bar{y},\mu)a_x(x,\bar{y},\mu)/a(x,\bar{y},\mu)}{a(x,\bar{y},\mu)}\int_{\R}\pi(y;x,\mu)dyd\bar{y}  \\
&- \int_{\R} \frac{f_x(x,\bar{y},\mu)-f(x,\bar{y},\mu)a_x(x,\bar{y},\mu)/a(x,\bar{y},\mu)}{a(x,\bar{y},\mu)}\int_{-\infty}^{\bar{y}} h(x,y,\mu) \pi(y;x,\mu)dy d\bar{y}\\
& = \frac{Z_x(x,\mu)}{Z(x,\mu)}\bar{h}(x,\mu)+\nonumber\\
&+\bar{h}(x,\mu) \int_0^\infty \frac{f_x(x,\bar{y},\mu)-f(x,\bar{y},\mu)a_x(x,\bar{y},\mu)/a(x,\bar{y},\mu)}{a(x,\bar{y},\mu)}\biggl[\int_{-\infty}^{\bar{y}}\pi(y;x,\mu)dy+\int_{\bar{y}}^{\infty}\pi(y;x,\mu)dy\biggr]d\bar{y}  \\
&- \int_{\R} \frac{f_x(x,\bar{y},\mu)-f(x,\bar{y},\mu)a_x(x,\bar{y},\mu)/a(x,\bar{y},\mu)}{a(x,\bar{y},\mu)}\int_{-\infty}^{\bar{y}} h(x,y,\mu) \pi(y;x,\mu)dy d\bar{y}\\
& = \frac{Z_x(x,\mu)}{Z(x,\mu)}\bar{h}(x,\mu)+\bar{h}(x,\mu) \int_0^\infty \frac{f_x(x,\bar{y},\mu)-f(x,\bar{y},\mu)a_x(x,\bar{y},\mu)/a(x,\bar{y},\mu)}{a(x,\bar{y},\mu)}\int_{-\infty}^{\bar{y}}\pi(y;x,\mu)dyd\bar{y}  \\
&+\bar{h}(x,\mu) \int_{-\infty}^0 \frac{f_x(x,\bar{y},\mu)-f(x,\bar{y},\mu)a_x(x,\bar{y},\mu)/a(x,\bar{y},\mu)}{a(x,\bar{y},\mu)}\int_{-\infty}^{\bar{y}}\pi(y;x,\mu)dyd\bar{y} \\
& + \bar{h}(x,\mu) \int_0^\infty \frac{f_x(x,\bar{y},\mu)-f(x,\bar{y},\mu)a_x(x,\bar{y},\mu)/a(x,\bar{y},\mu)}{a(x,\bar{y},\mu)}\int_{\bar{y}}^{\infty}\pi(y;x,\mu)dyd\bar{y}\\
&-\bar{h}(x,\mu) \int_{-\infty}^0 \frac{f_x(x,\bar{y},\mu)-f(x,\bar{y},\mu)a_x(x,\bar{y},\mu)/a(x,\bar{y},\mu)}{a(x,\bar{y},\mu)}\int_{-\infty}^{\bar{y}}\pi(y;x,\mu)dyd\bar{y}\\
&- \int_{\R} \frac{f_x(x,\bar{y},\mu)-f(x,\bar{y},\mu)a_x(x,\bar{y},\mu)/a(x,\bar{y},\mu)}{a(x,\bar{y},\mu)}\int_{-\infty}^{\bar{y}} h(x,y,\mu) \pi(y;x,\mu)dy d\bar{y}\\
& = \frac{Z_x(x,\mu)}{Z(x,\mu)}\bar{h}(x,\mu)+\bar{h}(x,\mu) \int_\R \frac{f_x(x,\bar{y},\mu)-f(x,\bar{y},\mu)a_x(x,\bar{y},\mu)/a(x,\bar{y},\mu)}{a(x,\bar{y},\mu)}\int_{-\infty}^{\bar{y}}\pi(y;x,\mu)dyd\bar{y}  \\
& + \bar{h}(x,\mu) \int_\R \int_0^{y}\frac{f_x(x,\bar{y},\mu)-f(x,\bar{y},\mu)a_x(x,\bar{y},\mu)/a(x,\bar{y},\mu)}{a(x,\bar{y},\mu)}d\bar{y}\pi(y;x,\mu)dy\\
&- \int_{\R} \frac{f_x(x,\bar{y},\mu)-f(x,\bar{y},\mu)a_x(x,\bar{y},\mu)/a(x,\bar{y},\mu)}{a(x,\bar{y},\mu)}\int_{-\infty}^{\bar{y}} h(x,y,\mu) \pi(y;x,\mu)dy d\bar{y},\\
\end{align*}
where in the last step we again changed the order of integration.

Now using
\begin{align*}
\pi_x(y;x,\mu)& = \pi(y;x,\mu)\biggl[\int_0^y \frac{1}{a(x,\bar{y},\mu)}[f_x(x,\bar{y},\mu)-f(x,\bar{y},\mu)a_x(x,\bar{y},\mu)/a(x,\bar{y},\mu)]d\bar{y}+\frac{Z_x(x,\mu)}{Z(x,\mu)}-\frac{a_x(x,y,\mu)}{a(x,y,\mu)} \biggr],
\end{align*}
we have
\begin{align*}
&\frac{Z_x(x,\mu)}{Z(x,\mu)}\bar{h}(x,\mu) + \bar{h}(x,\mu) \int_\R \int_0^{y}\frac{f_x(x,\bar{y},\mu)-f(x,\bar{y},\mu)a_x(x,\bar{y},\mu)/a(x,\bar{y},\mu)}{a(x,\bar{y},\mu)}d\bar{y}\pi(y;x,\mu)dy\\
& = \bar{h}(x,\mu)\int_\R \biggl[\int_0^{y}\frac{f_x(x,\bar{y},\mu)-f(x,\bar{y},\mu)a_x(x,\bar{y},\mu)/a(x,\bar{y},\mu)}{a(x,\bar{y},\mu)}d\bar{y}+\frac{Z_x(x,\mu)}{Z(x,\mu)} \biggr]\pi(y;x,\mu)dy\\
& = \bar{h}(x,\mu)\int_{\R}\pi_x(y;x,\mu) dy + \bar{h}(x,\mu)\int_{\R}\frac{a_x(x,y,\mu)}{a(x,y,\mu)}\pi(y;x,\mu)dy\\
& = \bar{h}(x,\mu)\partial_x\int_{\R}\pi(y;x,\mu) dy + \bar{h}(x,\mu)\int_{\R}\frac{a_x(x,y,\mu)}{a(x,y,\mu)}\pi(y;x,\mu)dy\\
& = \bar{h}(x,\mu)\int_{\R}\frac{a_x(x,y,\mu)}{a(x,y,\mu)}\pi(y;x,\mu)dy,
\end{align*}
where in the first and last step we again used $\pi$ integrates to 1 for all $x,\mu$.

Thus we can replace the first and third term in our chain of equalities to get:
\begin{align*}
&\int_\R \frac{f_x(x,\bar{y},\mu)-f(x,\bar{y},\mu)a_x(x,\bar{y},\mu)/a(x,\bar{y},\mu)}{a(x,\bar{y},\mu)}\biggl[\bar{h}(x,\mu)\int_{-\infty}^{\bar{y}}\pi(y;x,\mu)dy-\int_{-\infty}^{\bar{y}} h(x,y,\mu) \pi(y;x,\mu)dy \biggr]d\bar{y}  \\
& + \bar{h}(x,\mu) \int_\R \frac{a_x(x,y,\mu)}{a(x,y,\mu)}\pi(y;x,\mu)dy
\end{align*}
as desired.

For the case that $n=1,l=0$, we use the same argument, but using Gateaux derivatives of the lifted functions $\tilde{h},\tilde{a},\tilde{f},\tilde{v},\tilde{\pi}$ in the direction of some $\tilde{Y}\in L^2(\tilde{\W},\tilde{\F},\tilde{\Prob};\R)$, and a similar argument to that in Lemma \ref{lemma:derivativetransferformulas} to establish Fr\'echet differentiability of the lifted function $\int_\R \tilde{h}(x,y,\tilde{X})\tilde{\pi}(y;x,\tilde{X})$ and hence Lions differentiability of $\int_{\R}h(x,y,\mu)\pi(y;x,\mu)dy$.

Once the equality \eqref{eq:derivativetransferformula} is established in these two base cases, the same iterative argument as outlined in Lemma \ref{lemma:derivativetransferformulas} applies to arrive at the full expression.

Now we show the equality \eqref{eq:muLipschitztransferformula}. We have
\begin{align*}
&\int_{\R} h(x,y,\mu_1)-h(x,y,\mu_2)-[\mc{L}_{x,\mu_1}-\mc{L}_{x,\mu_2}]v(x,y,\mu_2)\pi(dy;x,\mu_1)\\
& = \bar{h}(x,\mu_1) - \int_{\R}h(x,y,\mu_2)+\mc{L}_{x,\mu_1}v(x,y,\mu_2)\pi(dy;x,\mu_1)  +\int_{\R}h(x,y,\mu_2)\pi(dy;x,\mu_1) - \bar{h}(x,\mu_2)\\
& = \bar{h}(x,\mu_1) - \bar{h}(x,\mu_2)- \int_{\R}\mc{L}_{x,\mu_1}v(x,y,\mu_2)\pi(dy;x,\mu_1),
\end{align*}
so to show
\begin{align*}
&\int_{\R}h(x,y,\mu_1)\pi(dy;x,\mu_1) -\int_{\R}h(x,y,\mu_2)\pi(dy;x,\mu_2)  = \nonumber\\
&\qquad= \int_{\R} h(x,y,\mu_1)-h(x,y,\mu_2)-[\mc{L}_{x,\mu_1}-\mc{L}_{x,\mu_2}]v(x,y,\mu_2)\pi(dy;x,\mu_1)
\end{align*}
it is sufficient to show
\begin{align*}
\int_{\R}\mc{L}_{x,\mu_1}v(x,y,\mu_2)\pi(dy;x,\mu_1) & =0.
\end{align*}

Using the previous calculation for $v_y$ and $v_{yy}$, we have
\begin{align*}
&\int_{\R}\mc{L}_{x,\mu_1}v(x,y,\mu_2)\pi(dy;x,\mu_1)=\int_{\R}\frac{f(x,y,\mu_1)-f(x,y,\mu_2)a(x,y,\mu_1)/a(x,y,\mu_2)}{a(x,y,\mu_2)}\frac{\pi(y;x,\mu_1)}{\pi(y;x,\mu_2)}\times\nonumber\\
&\hspace{4cm}\times\biggl[\int_{-\infty}^y h(x,\bar{y},\mu_2)\pi(\bar{y};x,\mu_2)d\bar{y}-\bar{h}(x,\mu_2)\int_{-\infty}^y \pi(\bar{y};x,\mu_2)d\bar{y}\biggr]dy\\
&\quad+\int_{\R}a(x,y,\mu_1)\frac{h(x,y,\mu_2)-\bar{h}(x,\mu_2)}{a(x,y,\mu_2)}\pi(y;x,\mu_1)dy\\
& =\int_{\R}g(x,y,\mu_1,\mu_2)\exp(\int_0^y g(x,\bar{y},\mu_1,\mu_2)d\bar{y}) \biggl[\int_{-\infty}^y h(x,\bar{y},\mu_2)\pi(\bar{y};x,\mu_2)d\bar{y}-\bar{h}(x,\mu_2)\int_{-\infty}^y \pi(\bar{y};x,\mu_2)d\bar{y}\biggr]dy\\
&\quad+\int_{\R}a(x,y,\mu_1)\frac{h(x,y,\mu_2)-\bar{h}(x,\mu_2)}{a(x,y,\mu_2)}\pi(y;x,\mu_1)dy,\\
\end{align*}
where $g(x,y,\mu_1,\mu_2)=\frac{f(x,y,\mu_1)}{a(x,y,\mu_1)}-\frac{f(x,y,\mu_2)}{a(x,y,\mu_2)}$, and here we used the explicit form of $\pi(y;x,\mu)$. Then by integration by parts, using that
\begin{align*}
\lim_{y\tto\pm\infty}\int_{-\infty}^y h(x,\bar{y},\mu_2)\pi(\bar{y};x,\mu_2)d\bar{y}-\bar{h}(x,\mu_2)\int_{-\infty}^y \pi(\bar{y};x,\mu_2)d\bar{y}=0,
\end{align*}
we get
\begin{align*}
&\int_{\R}\mc{L}_{x,\mu_1}v(x,y,\mu_2)\pi(dy;x,\mu_1)=\\
&= - \int_{\R}\exp(\int_0^y \frac{f(x,\bar{y},\mu_1)}{a(x,\bar{y},\mu_1)}-\frac{f(x,\bar{y},\mu_2)}{a(x,\bar{y},\mu_2)}d\bar{y})[h(x,y,\mu_2) - \bar{h}(x,\mu_2)]\pi(y;x,\mu_2)dy \\
&\quad +\int_{\R}a(x,y,\mu_1)\frac{h(x,y,\mu_2)-\bar{h}(x,\mu_2)}{a(x,y,\mu_2)}\pi(y;x,\mu_1)dy\\
& = - \int_{\R} \frac{a(x,y,\mu_1)}{a(x,y,\mu_2)}\pi(y;x,\mu_1)[h(x,y,\mu_2) - \bar{h}(x,\mu_2)]dy+\int_{\R}a(x,y,\mu_1)\frac{h(x,y,\mu_2)-\bar{h}(x,\mu_2)}{a(x,y,\mu_2)}\pi(y;x,\mu_1)dy\\
&=0
\end{align*}
as desired. The proof of the equality \eqref{eq:xLipschitztransferformula} follows in the exact same way.
\end{proof}

\begin{lemma}\label{lemma:explicitrateofgrowthofderivativesinparameters1D}
Consider $B:\R^d\times\R^d\times\bb{P}_2(\R^d)\tto \R$ such that
\begin{align*}
\int_{\R^d}B(x,y,\mu)\pi(dy;x,\mu)=0,\forall x\in \R^d,\mu\in \mc{P}_2(\R^d).
\end{align*}
Suppose that for some complete collection of multi-indices $\bm{\zeta}$ that $B\in \mc{M}_{p}^{\bm{\zeta}}(\R^d\times\R^d\times \mc{P}_2(\R^d);\R),f\in \mc{M}_{p}^{\bm{\zeta}}(\R^d\times\R^d\times \mc{P}_2(\R^d);\R^d),a\in \mc{M}_{p}^{\bm{\zeta}}(\R^d\times\R^d\times \mc{P}_2(\R^d);\R^{d\times d})$.
Then for the unique classical solution $u:\R^d\times\R^d\times\mc{P}_2(\R^d)\tto \R$ to
\begin{align*}
\mc{L}_{x,\mu}u(x,y,\mu)=B(x,y,\mu)
\end{align*}
such that $\int_{\R^d}u(x,y,\mu)\pi(dy;x,\mu)=0$, and $u$ has at most polynomial growth as $|y|\tto \infty$ (which exists by Lemma \ref{lemma:Ganguly1DCellProblemResult}): $u,\partial_{y_i}u,\partial_{y_i}\partial_{y_j}u\in \mc{M}_{p}^{\bm{\zeta}}(\R^d\times\R^d\times \mc{P}_2(\R^d);\R)$ for all $i,j=1,...,d$.
\end{lemma}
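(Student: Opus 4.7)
The plan is to proceed by induction on the collection of multi-indices $\bm{\zeta}$, using its completeness to ensure that at each step all lower-order mixed derivatives are already controlled. The base case is the multi-index $(0,0,0)$, which is exactly the content of Lemma \ref{lemma:Ganguly1DCellProblemResult}, giving $u$ together with $\nabla_y u$ and $\nabla_y\nabla_y u$ locally H\"older continuous and of at most polynomial growth in $y$ uniformly in $(x,\mu)$. For the inductive step at a multi-index $(n,l,\bm{\beta})\in\bm{\zeta}$, the idea is to formally apply $D^{(n,l,\bm{\beta})}$ to both sides of $\mc{L}_{x,\mu}u(x,y,\mu)=B(x,y,\mu)$ and treat the result as a new Poisson equation for $w \coloneqq D^{(n,l,\bm{\beta})}u[z_1,\dots,z_n]$ with coefficients $f(x,y,\mu),a(x,y,\mu)$ (which only depend on $(x,y,\mu)$, not on $z_i$, and hence commute with the $z_i$-derivatives).

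Concretely, by the Leibniz rule for the mixed derivatives $D^{(n,l,\bm{\beta})}$ on the product structure implicit in $\mc{L}_{x,\mu}$, one obtains
\begin{equation*}
\mc{L}_{x,\mu} w(x,y,\mu)[z_1,\dots,z_n] \;=\; \tilde B(x,y,\mu)[z_1,\dots,z_n],
\end{equation*}
where $\tilde B$ is a finite linear combination of terms of the form $\bigl(\mc{L}^{(k,j,\bm{\alpha}(\bm{p}_k))}_{x,\mu}[z_{\bm{p}_k}]\bigr)D^{(n-k,l-j,\bm{\alpha}(\bm{p}'_{n-k}))}u[z_{\bm{p}'_{n-k}}]$ with $(k,j)\neq(0,0)$, plus $D^{(n,l,\bm{\beta})}B[z_1,\dots,z_n]$. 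By completeness of $\bm{\zeta}$ and the inductive hypothesis, every lower-order derivative $D^{(n-k,l-j,\bm{\alpha}(\bm{p}'_{n-k}))}u$, $D^{(n-k,l-j,\bm{\alpha}(\bm{p}'_{n-k}))}\nabla_y u$, $D^{(n-k,l-j,\bm{\alpha}(\bm{p}'_{n-k}))}\nabla_y\nabla_y u$ appearing in $\tilde B$ lies in $\mc{M}_{p}^{\bm{\zeta}}$, and by the hypothesis on $a,f$ the operator $\mc{L}^{(k,j,\bm{\alpha}(\bm{p}_k))}_{x,\mu}[z_{\bm{p}_k}]$ contributes coefficients in $\mc{M}_p^{\bm{\zeta}}$. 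Combined with $B\in\mc{M}_p^{\bm{\zeta}}$, this shows $\tilde B$ has at most polynomial growth in $y$ uniformly in all other arguments and is locally H\"older continuous in $y$ uniformly in $(x,z_1,\dots,z_n,\mu)$.

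The key verification is the centering condition $\int_{\R^d}\tilde B(x,y,\mu)[z_1,\dots,z_n]\,\pi(dy;x,\mu)=0$, which is exactly where Lemma \ref{lemma:derivativetransferformulas} (Equation \eqref{eq:derivativetransferformula}) is used: applying $D^{(n,l,\bm{\beta})}$ to the identity $\int_{\R^d}B(x,y,\mu)\pi(dy;x,\mu)=0$ and recognizing $u$ itself as the Poisson solution $v$ in the statement of that Lemma produces precisely the right-hand side $\int_{\R^d}\tilde B(x,y,\mu)[z_1,\dots,z_n]\pi(dy;x,\mu)$, which must therefore vanish. Granted the centering and regularity of $\tilde B$, Lemma \ref{lemma:Ganguly1DCellProblemResult} supplies a unique centered classical solution $\tilde w$ of $\mc{L}_{x,\mu}\tilde w=\tilde B$ with $\tilde w,\nabla_y \tilde w,\nabla_y\nabla_y \tilde w$ locally H\"older and polynomially growing in $y$. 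Uniqueness together with the already-established existence of $w=D^{(n,l,\bm{\beta})}u$ (obtained for instance by differentiating the probabilistic representation of $u$ as a time integral against the frozen-process semigroup, with passage of derivatives under the integral justified by the polynomial growth estimates from the induction hypothesis) yields $\tilde w = w$, so $w$ and its first two $y$-derivatives belong to $\mc{M}_p^{\bm{\zeta}}$.

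The main technical obstacle will be the rigorous justification that $u$ actually possesses the mixed derivative $D^{(n,l,\bm{\beta})}u$ and that one may differentiate the Poisson equation termwise; this must be handled via the lifted-function Fr\'echet differentiability argument used in Lemma \ref{lemma:derivativetransferformulas}, applied now to $u$ rather than to averaged quantities, combined with uniform-in-parameter decay estimates on the frozen-process semigroup (available under Assumptions \ref{assumption:uniformellipticity}, \ref{assumption:retractiontomean}, as in the extension of \cite{PV2} Proposition 2 discussed in the proof of Lemma \ref{lemma:derivativetransferformulas}). Once this justification is in place, the induction closes because the collection $\bm{\zeta}$ is finite and complete, so finitely many iterations of the above argument exhaust all multi-indices in $\bm{\zeta}$.
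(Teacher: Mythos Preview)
Your approach is essentially the same as the paper's: induction on the multi-indices in $\bm{\zeta}$, differentiating the Poisson equation to obtain Equation \eqref{eq:formulasatisfiedbyderivatives}, verifying centering of the new right-hand side via Lemma \ref{lemma:derivativetransferformulas}, and invoking Lemma \ref{lemma:Ganguly1DCellProblemResult} at each step.

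One point you gloss over, however, is the Lipschitz/H\"older continuity of each $D^{(n,l,\bm{\beta})}u$ in $(x,\mu,z_1,\dots,z_n)$ required by the definition of $\mc{M}_p^{\bm{\zeta}}$ (Equation \eqref{eq:LionsClassYLipschitz}). Lemma \ref{lemma:Ganguly1DCellProblemResult} applied to $\tilde B$ only gives you polynomial growth and local H\"older regularity in $y$, uniformly in the parameters; it does not by itself yield the Lipschitz dependence on the parameters. The paper handles this separately at each inductive level by a difference argument: for instance, writing
\[
\mc{L}_{x,\mu_1}\bigl[u(x,y,\mu_1)-u(x,y,\mu_2)\bigr]
= B(x,y,\mu_1)-B(x,y,\mu_2)-[\mc{L}_{x,\mu_1}-\mc{L}_{x,\mu_2}]u(x,y,\mu_2),
\]
checking via Equation \eqref{eq:muLipschitztransferformula} that the right-hand side is centered against $\pi(\cdot;x,\mu_1)$, and then using that the growth constant in Lemma \ref{lemma:Ganguly1DCellProblemResult} depends linearly on the local H\"older seminorm of the inhomogeneity to extract a bound of the form $|u(x,y,\mu_1)-u(x,y,\mu_2)|\leq C\,\bb{W}_2(\mu_1,\mu_2)(1+|y|)^k$. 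Analogous difference equations give the Lipschitz bounds in $x$ and in each $z_i$. This step is routine once identified, but without it you have not actually verified membership in $\mc{M}_p^{\bm{\zeta}}$, only the polynomial-growth clause \eqref{eq:newqnotation}.
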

\begin{proof}
The proof essentially uses the same tools and a similar method to Lemma \ref{lemma:derivativetransferformulas} here and Theorem 2.1 Step 3 in \cite{RocknerFullyCoupled}, so we will only check this in the case for $(n,l,\bm{\beta})=(0,1,0)$ and then comment on how the rest of the terms follow. Importantly, Lemma \ref{lemma:derivativetransferformulas} only assumes existence and polynomial growth of derivatives up to one order less than the derivative we obtain from Equation \eqref{eq:derivativetransferformula}.

The result for $(n,l,\bm{\beta})=(0,0,0)$ is just another way of writing Lemma \ref{lemma:Ganguly1DCellProblemResult}, once we establish continuity of $u,\partial_y u,\partial^2_y u$ in $x,\mu$. For this, the proof is similar to Step 4 in the proof of Theorem 2.1 in \cite{RocknerFullyCoupled}.

We first note that
\begin{align*}
\mc{L}_{x,\mu_1}[u(x,y,\mu_1) - u(x,y,\mu_2)] &=B(x,y,\mu_1) -  \mc{L}_{x,\mu_1}u(x,y,\mu_2) \nonumber\\
&= B(x,y,\mu_1) - B(x,y,\mu_2) -[\mc{L}_{x,\mu_1}-\mc{L}_{x,\mu_2}]u(x,y,\mu_2).
\end{align*}

By the equality \eqref{eq:muLipschitztransferformula} from Lemma \ref{lemma:derivativetransferformulas}, we have
\begin{align*}
&\int_{\R^d}B(x,y,\mu_1) - B(x,y,\mu_2) -[\mc{L}_{x,\mu_1}-\mc{L}_{x,\mu_2}]u(x,y,\mu_2)\pi(dy,x,\mu_1) = \nonumber\\
&\hspace{3cm}= \int_{\R^d}B(x,y,\mu_1)\pi(dy;x,\mu_1) -\int_{\R^d}B(x,y,\mu_2)\pi(dy;x,\mu_2)=0,
\end{align*}
so in fact the inhomogeneity in the above Poisson equation is centered. Now, via the assumptions on $B,f,a$ and the fact that the bound on the growth of $u$ in Lemma \ref{lemma:Ganguly1DCellProblemResult} depends linearly on the local H\"older semi-norm of the Poisson Equation's inhomogeneity as per the Proof of Theorem 2.1 i) in \cite{RocknerFullyCoupled}, we have:
\begin{align*}
|u(x,y,\mu_1) - u(x,y,\mu_2)|&\leq C\bb{W}_2(\mu_1,\mu_2)(1+|y|)^{p}\\
|\partial_y u(x,y,\mu_1) - \partial_y u(x,y,\mu_2)|&\leq C\bb{W}_2(\mu_1,\mu_2)(1+|y|)^{p'}\\
|\partial^2_yu(x,y,\mu_1) - \partial^2_yu(x,y,\mu_2)|&\leq C\bb{W}_2(\mu_1,\mu_2)(1+|y|)^{p''},
\end{align*}
for some $C>0$ and $p,p',p''\in\bb{N}$.

The proof with $\mu_1,\mu_2$ replaced by $x_1,x_2$ follows in the same way, and thus the desired continuity in $x,\bb{W}_2$ is established.

To obtain continuity of and a rate of polynomial growth for $\partial_xu$, we differentiate the equation that $u$ satisfies to get
\begin{align*}
\mc{L}_{x,\mu}D^{(0,1,0)}u(x,y,\mu)&=D^{(0,1,0)}B(x,y,\mu)- D^{(0,1,0)}f(x,y,\mu)\cdot \partial_y u(x,y,\mu) - D^{(0,1,0)}a(x,y,\mu):\partial^2_yu(x,y,\mu)\\
& = D^{(0,1,0)}B(x,y,\mu) - \mc{L}^{(0,1,0)}_{x,\mu}u(x,y,\mu)
\end{align*}
in the notation of Lemma \ref{lemma:derivativetransferformulas}, where a priori the derivative of $u$ in $x$ is in the weak sense.

But by the centering condition on $B$, we have by letting $B=h$ in Lemma \ref{lemma:derivativetransferformulas}, that $u=v$ in the statement of that same lemma. Thus, we have
\begin{align*}
\int_{\R^d}D^{(0,1,0)}B(x,y,\mu) - \mc{L}^{(0,1,0)}_{x,\mu}u(x,y,\mu)\pi(dy;x,\mu)& = D^{(0,1,0)}\biggl[\int_{\R^d}B(x,y,\mu)\pi(dy;x,\mu)\biggr]=0,
\end{align*}
and the inhomogeneity of the elliptic PDE that $D^{(0,1,0)}u$ solves in fact obeys the centering condition, and since we already know that $u$ is locally H\"older continuous with polynomial growth in $y$, Lemma \ref{lemma:Ganguly1DCellProblemResult} applies. This establishes that $D^{(0,1,0)}u$ grows at most polynomially and is locally H\"older continuous in $y$ uniformly in $x,\mu$. Then continuity in $x,\mu$ follows as above, but with $D^{(0,1,0)}B(x,y,\mu) - \mc{L}^{(0,1,0)}_{x,\mu}u(x,y,\mu)$ in the place of $B$, and $D^{(0,1,0)}u$ in the place of $u$.
The same process applies to $D^{(1,0,0)}u$. To establish its continuity in $z$, we first recall that for all $x,y,z\in\R^d,\mu\in\mc{P}_2(\R^d)$
\begin{align*}
\mc{L}_{x,\mu}D^{(1,0,0)}u(x,y,\mu)[z] = D^{(1,0,0)}B(x,y,\mu)[z]-\mc{L}^{(1,0,0)}_{x,\mu}[z]u(x,y,\mu)
\end{align*}
so
\begin{align*}
\mc{L}_{x,\mu}\biggl[D^{(1,0,0)}u(x,y,\mu)[z_1]-D^{(1,0,0)}u(x,y,\mu)[z_2]\biggr]&=D^{(1,0,0)}B(x,y,\mu)[z_1]-\mc{L}^{(1,0,0)}_{x,\mu}[z_1]u(x,y,\mu)\\
&-\biggl[D^{(1,0,0)}B(x,y,\mu)[z_2]-\mc{L}^{(1,0,0)}_{x,\mu}[z_2]u(x,y,\mu)\biggr].
\end{align*}
Then by the equality \eqref{eq:derivativetransferformula} from Lemma \ref{lemma:derivativetransferformulas}, we have for all $x,z\in\R^d,\mu\in\mc{P}_2(\R^d)$:
\begin{align*}
\int_{\R^d}D^{(1,0,0)}B(x,y,\mu)[z]-\mc{L}^{(1,0,0)}_{x,\mu}[z]u(x,y,\mu)\pi(dy;x,\mu) = D^{(0,1,0)}\int_{\R^d}B(x,y,\mu)\pi(dy;x,\mu)[z] =0,
\end{align*}
so the inhomogeneity in the Poisson equation above in centered. Thus, using the same argument as for the other continuity proofs and the assumed continuity of $D^{(1,0,0)}B,D^{(1,0,0)}f,D^{(1,0,0)}a$ in $z$ with the fact that $\partial_yu,\partial^2_yu$ grow at most polynomially in $y$, we get there is $p\in \bb{N}$ and $C>0$ such that
\begin{align*}
\biggl|D^{(1,0,0)}u(x,y,\mu)[z_1]-D^{(1,0,0)}u(x,y,\mu)[z_2]\biggr|\leq C|z_1-z_2| (1+|y|)^p,
\end{align*}
and similarly for $D^{(1,0,0)}\partial_y u$ and $D^{(1,0,0)}\partial^2_yu$.

All of the remaining bounds work in the same way, with the inhomogeneity of the elliptic PDE of the desired derivative of $u$ solves being the integrand of the expression for the corresponding derivative of $\bar{B}(x,y,\mu)$ from Equation \eqref{eq:derivativetransferformula} in Lemma \ref{lemma:derivativetransferformulas}. Put explicitly:
\begin{align}\label{eq:formulasatisfiedbyderivatives}
&\mc{L}_{x,\mu}D^{(n,l,\bm{\beta})}u(x,y,\mu)[z_1,...,z_n]\\
& = D^{(n,l,\bm{\beta})}B(x,y,\mu)[z_1,...,z_n] - \sum_{k=0}^n\sum_{j=0}^l\sum_{\bm{p}_k} C_{(\bm{p}_k,j,n,l)} \mc{L}^{(k,j,\bm{\alpha}(\bm{p}_k))}_{x,\mu}[z_{\bm{p}_k}] D^{(n-k,l-j,\bm{\alpha}(\bm{p}'_{n-k}))}u(x,y,\mu)[z_{\bm{p}'_{n-k}}].\nonumber
\end{align}

The inhomogeneity is always a jointly continuous (in the sense of Equation \eqref{eq:LionsClassYLipschitz}) function which grows at most polynomially in $y$ uniformly in $x$ and $\mu$, and only depends on lower order derivatives of $u$. Thus, it is clear the result follows by proceeding inductively on $n,l$.
\end{proof}

\begin{lemma}\label{lem:regularityofaveragedcoefficients}
Suppose that for some complete collection of multi-indices $\bm{\zeta}$ that $h\in \mc{M}_{p}^{\bm{\zeta}}(\R^d\times\R^d\times \mc{P}_2(\R^d);\R),f\in \mc{M}_{p}^{\bm{\zeta}}(\R^d\times\R^d\times \mc{P}_2(\R^d);\R^d),a\in \mc{M}_{p}^{\bm{\zeta}}(\R^d\times\R^d\times \mc{P}_2(\R^d);\R^{d\times d})$. Then $\bar{h}(x,\mu)\coloneqq\int_{\R^d} h(x,y,\mu)\pi(dy;x,\mu)\in \mc{M}_{b,L}^{\bm{\zeta}}(\R^d\times \mc{P}_2(\R^d);\R)$.\end{lemma}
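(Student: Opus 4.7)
The plan is to proceed by induction on the complete collection $\bm{\zeta}$ ordered by inclusion, leveraging the two main tools developed earlier: the derivative transfer formula from Lemma \ref{lemma:derivativetransferformulas} and the auxiliary Poisson equation regularity from Lemma \ref{lemma:explicitrateofgrowthofderivativesinparameters1D}, combined with the integrability of $\pi$ against polynomials in $y$ (which follows from the dissipativity condition \ref{assumption:retractiontomean}).

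For the base case $\bm{\zeta}=\{(0,0,0)\}$, boundedness of $\bar h$ follows immediately from the polynomial growth of $h$ in $y$ uniformly in $(x,\mu)$ together with the exponential moments of $\pi$. For Lipschitz continuity in $(x,\bb{W}_2)$, I would apply the transfer identities \eqref{eq:muLipschitztransferformula} and \eqref{eq:xLipschitztransferformula} with $v$ solving $\mc{L}_{x,\mu}v = h-\bar h$ (which exists and has $\nabla_y v,\nabla_y\nabla_y v$ of polynomial growth in $y$ by Lemma \ref{lemma:Ganguly1DCellProblemResult}), and then bound each integrand by $C(1+|y|^m)\bb{W}_2(\mu_1,\mu_2)$ or $C(1+|y|^m)|x_1-x_2|$ using the assumed joint H\"older/Lipschitz regularity of $h,f,a$ in $(x,y,\mu)$ with polynomial $y$-growth from $\mc{M}^{\{(0,0,0)\}}_p$.

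For the inductive step, I would pick a maximal element $(n,l,\bm{\beta})\in\bm{\zeta}$ and let $\bm{\zeta}'=\bm{\zeta}\setminus\{(n,l,\bm{\beta})\}$, which is still complete by maximality. By the induction hypothesis $\bar h\in\mc{M}_{b,L}^{\bm{\zeta}'}(\R^d\times\mc{P}_2(\R^d))$, so $B:=h-\bar h\in\mc{M}^{\bm{\zeta}'}_p(\R^d\times\R^d\times\mc{P}_2(\R^d))$ and $B$ satisfies the centering condition. Applying Lemma \ref{lemma:explicitrateofgrowthofderivativesinparameters1D} to $B$ yields that the solution $v$ of $\mc{L}_{x,\mu}v=h-\bar h$ together with $\nabla_y v$ and $\nabla_y\nabla_y v$ all lie in $\mc{M}^{\bm{\zeta}'}_p$, which supplies exactly the hypothesis required to invoke Lemma \ref{lemma:derivativetransferformulas} at the multi-index $(n,l,\bm{\beta})$. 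This gives the explicit representation
\begin{align*}
D^{(n,l,\bm{\beta})}\bar h(x,\mu)[z_1,\dots,z_n] = \int_{\R^d} F(x,y,z_1,\dots,z_n,\mu)\,\pi(dy;x,\mu),
\end{align*}
where $F$ is a finite linear combination of products of derivatives of $h,a,f$ (controlled by $\mc{M}^{\bm{\zeta}}_p$) and derivatives of $v,\nabla_y v,\nabla_y\nabla_y v$ (controlled by $\mc{M}^{\bm{\zeta}'}_p$). Boundedness of $D^{(n,l,\bm{\beta})}\bar h$ uniformly in $(x,z_1,\dots,z_n,\mu)$ then follows from polynomial $y$-growth of $F$ and the exponential moments of $\pi$.

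For the joint Lipschitz continuity of $D^{(n,l,\bm{\beta})}\bar h$ in $(x,z_1,\dots,z_n,\mu)$, the dependence on the $z_i$'s is handled by differentiation under the integral sign since $\pi$ is independent of $z$ and $F$ is polynomially-growth Lipschitz in each $z_i$; for the $(x,\bb{W}_2)$ dependence, I would apply the transfer formulas \eqref{eq:muLipschitztransferformula} and \eqref{eq:xLipschitztransferformula} once more, this time with the integrand $F$ playing the role of $h$ and an auxiliary $v_F$ solving $\mc{L}_{x,\mu}v_F=F-\int F\pi$, which again has polynomial-growth $y$-derivatives by Lemma \ref{lemma:explicitrateofgrowthofderivativesinparameters1D}. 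The joint polynomial-growth Lipschitz continuity of $F$ in $(x,\mu)$ — which is where the main bookkeeping difficulty lies, since it requires combining the Lipschitz estimates from $\mc{M}^{\bm{\zeta}}_p$ on $h,a,f$ with those on the $v$-derivatives from Lemma \ref{lemma:explicitrateofgrowthofderivativesinparameters1D} — yields the desired Lipschitz bound after integration against $\pi$. The main obstacle is precisely this combinatorial bookkeeping: matching the orders of derivatives appearing in the $\mc{L}^{(k,j,\bm{\alpha}(\bm{p}_k))}_{x,\mu}D^{(n-k,l-j,\bm{\alpha}(\bm{p}'_{n-k}))}v$ terms of Equation \eqref{eq:derivativetransferformula} against the multi-indices in $\bm{\zeta}'$ to ensure every factor lies in a space where we already have the required polynomial-growth Lipschitz control.
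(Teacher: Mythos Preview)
Your proposal is correct and follows essentially the same approach as the paper: the paper also proceeds by induction on the multi-indices in $\bm{\zeta}$, establishing the $(0,0,0)$ case via \eqref{eq:muLipschitztransferformula}--\eqref{eq:xLipschitztransferformula} and then the inductive step by combining the derivative transfer formula \eqref{eq:derivativetransferformula} with the Poisson-equation regularity of Lemma~\ref{lemma:explicitrateofgrowthofderivativesinparameters1D}. The only cosmetic difference is that the paper identifies your auxiliary $v_F$ explicitly as $D^{(n,l,\bm{\beta})}v$ (via \eqref{eq:formulasatisfiedbyderivatives} and uniqueness), whereas you invoke a fresh Poisson solution; both routes give the same polynomial-growth control needed to close the Lipschitz estimate.
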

\begin{proof}
This follows via Lemmas \ref{lemma:derivativetransferformulas} and \ref{lemma:Ganguly1DCellProblemResult} in a similar way to Lemma \ref{lemma:explicitrateofgrowthofderivativesinparameters1D}.

Boundedness and continuity of the derivatives when the coefficients are in $\mc{M}_{p}^{\bm{\zeta}}(\R^d\times\R^d\times \mc{P}_2(\R^d);\R^k),k\in\br{1,d,d\times d}$ follows in the same way as Lipschitz continuity. Thus, we only show the latter.
For the Lipschitz property when $(n,l,\bm{\beta})=(0,0,0)$, we have via Equations \eqref{eq:muLipschitztransferformula} and \eqref{eq:xLipschitztransferformula}, using the polynomial growth in $y$ of $v,\partial_yv,$ and $\partial^2_yv$ from Equation \eqref{eq:poissoneqfortransferformulas} gained by Lemma \ref{lemma:Ganguly1DCellProblemResult} with $B=h-\bar{h}$ and the assumed Lipschitz continuity of $f,a,h$:
\begin{align*}
|\bar{h}(x,\mu_1)-\bar{h}(x,\mu_2)|&\leq \int_{\R^d} C\bb{W}_2(\mu_1,\mu_2)(1+|y|)^p\pi(dy;x,\mu_1) \text{ for some $p\in \bb{N}$}\\
&\leq C\bb{W}_2(\mu_1,\mu_2)
\end{align*}
and
\begin{align*}
|\bar{h}(x_1,\mu)-\bar{h}(x_2,\mu)|&\leq \int_{\R^d} C|x_1-x_2|(1+|y|)^p\pi(dy;x_1,\mu) \text{ for some $p\in \bb{N}$}\\
&\leq C|x_1-x_2|,
\end{align*}
for all $x_1,x_2,x\in\R^d,\mu_1,\mu_2,\mu\in\mc{P}_2(\R^d)$, so the result follows.
Now, as with the previous two results, the rest follows via an induction argument on $n,l$. For the case $(n,l,\bm{\beta})=(0,1,0)$, via Equation \ref{lemma:derivativetransferformulas}, we have
\begin{align*}
|D^{(0,1,0)}\bar{h}(x,\mu)| & = \biggl|\int_{\R^d}D^{(0,1,0)}h(x,y,\mu)- \mc{L}^{(0,1,0)}_{x,\mu}v(x,y,\mu)\pi(y;x,\mu)dy\biggr|\\
&\leq \int_{\R^d} C(1+|y|)^p\pi(dy;x,\mu) \text{ for some $p\in \bb{N}$}\\
&\leq C
\end{align*}
from the fact that $\partial_x h,\partial_xf,\partial_xa,\partial_yv,\partial^2_yv$ grow at most polynomially in $y$ uniformly in $x,\mu$ by assumption and Lemma \ref{lemma:Ganguly1DCellProblemResult}.

In addition, from  Equation \eqref{eq:formulasatisfiedbyderivatives} with $B=h-\bar{h}$, we know
\begin{align*}
\mc{L}_{x,\mu}D^{(0,1,0)}v &= D^{(0,1,0)}h -D^{(0,1,0)}\bar{h} - \mc{L}^{(0,1,0)}_{x,\mu}v(x,y,\mu) \nonumber\\
&= D^{(0,1,0)}h  - \mc{L}^{(0,1,0)}_{x,\mu}v(x,y,\mu)-\int_{\R^d}D^{(0,1,0)}h - \mc{L}^{(0,1,0)}_{x,\mu}v(x,y,\mu)\pi(dy;x,\mu)
\end{align*}
the right side of which we now know grows at most polynomially in $y$ uniformly in $x,\mu$. Then
\begin{align*}
D^{(0,1,0)}\bar{h}(x,\mu_1)-D^{(0,1,0)}\bar{h}(x,\mu_2) & = \int_{\R^d}D^{(0,1,0)}h(x,y,\mu_1)- \mc{L}^{(0,1,0)}_{x,\mu_1}v(x,y,\mu_1)\pi(y;x,\mu_1)dy\\
&-\int_{\R^d}D^{(0,1,0)}h(x,y,\mu_2)- \mc{L}^{(0,1,0)}_{x,\mu_2}v(x,y,\mu_2)\pi(y;x,\mu_2)dy
\end{align*}
so using Equation \eqref{eq:xLipschitztransferformula} with $h(x,y,\mu) = D^{(0,1,0)}h(x,y,\mu)- D^{(0,1,0)}\mc{L}_{x,\mu}v(x,y,\mu)$, we get
\begin{align*}
D^{(0,1,0)}\bar{h}(x,\mu_1)-D^{(0,1,0)}\bar{h}(x,\mu_2) & = \int_{\R^d} D^{(0,1,0)}h(x,y,\mu_1)- \mc{L}^{(0,1,0)}_{x,\mu_1}v(x,y,\mu_1)-[D^{(0,1,0)}h(x,y,\mu_2)- \mc{L}^{(0,1,0)}_{x,\mu_2}v(x,y,\mu_2)]\\
&-[\mc{L}_{x,\mu_1}-\mc{L}_{x,\mu_2}]D^{(0,1,0)}v(x,y,\mu_2)\pi(dy;x,\mu_1),
\end{align*}
so
\begin{align*}
|D^{(0,1,0)}\bar{h}(x,\mu_1)-D^{(0,1,0)}\bar{h}(x,\mu_2)| & \leq \int_{\R^d} C(1+|y|)^p\bb{W}_2(\mu_1,\mu_2)\pi(dy;x,\mu_1)\text{ for some $p\in \bb{N}$}\\
&\leq C\bb{W}_2(\mu_1,\mu_2),
\end{align*}
where here we used from Lemma \ref{lemma:explicitrateofgrowthofderivativesinparameters1D} that $D^{(0,1,0)}\partial_y v$ and $D^{(0,1,0)}\partial^2_y v$ grow at most polynomially in $y$ uniformly in $x,\mu$.
In the same way, we can get
\begin{align*}
|D^{(0,1,0)}\bar{h}(x_1,\mu)-D^{(0,1,0)}\bar{h}(x_2,\mu)|\leq C|x_1-x_2|.
\end{align*}

The proof for $(k,l,\mu) = (1,0,0)$ being bounded Lipschitz in $x,\mu$ follows in essentially the same way. To see that $D^{(1,0,0)}h(x,y,\mu)[z]$ is Lipschitz in $z$, we use the representation
\begin{align*}
D^{(1,0,0)}h(x,y,\mu)[z]& = \int_{\R^d} \partial_\mu h(x,y,\mu)[z]- \mc{L}^{(1,0,0)}_{x,\mu}[z]v(x,y,\mu)\pi(dy;x,\mu),
\end{align*}
so
\begin{align*}
D^{(1,0,0)}h(x,y,\mu)[z_1]-D^{(1,0,0)}h(x,y,\mu)[z_2]& = \int_{\R^d} \partial_\mu h(x,y,\mu)[z_1]- \mc{L}^{(1,0,0)}_{x,\mu}[z_1]v(x,y,\mu) \\
&-[\partial_\mu h(x,y,\mu)[z_2]- \mc{L}^{(1,0,0)}_{x,\mu}[z_2]v(x,y,\mu)] \pi(dy;x,\mu),
\end{align*}
and by the Lipschitz properties of $D^{(1,0,0)}h,D^{(1,0,0)}f,D^{(1,0,0)}a$ in $z$, we see
\begin{align*}
|D^{(1,0,0)}h(x,y,\mu)[z_1]-D^{(1,0,0)}h(x,y,\mu)[z_2]|&\leq \int_{\R^d} C(1+|y|)^p|z_1-z_2|\pi(dy;x,\mu_1)\text{ for some $p\in \bb{N}$}\\
&\leq C|z_1-z_2|.
\end{align*}

Once again, the result for higher derivatives follows from iterating on the above method.

\end{proof}

\begin{remark}\label{remark:regularityfordoubledequations}
Although Lemmas \ref{lemma:Ganguly1DCellProblemResult},\ref{lemma:derivativetransferformulas}.\ref{lemma:explicitrateofgrowthofderivativesinparameters1D}, and \ref{lem:regularityofaveragedcoefficients} were stated for simplicity in terms of $f,a,\mc{L}_{x,\mu}$, and $\pi$, the only assumptions needed other than those posed in the statement of each Lemma are those on $f$ and $a$ from Assumptions \ref{assumption:uniformellipticity} and \ref{assumption:retractiontomean}. Thus, if we take care to change the domains of the functions in the statements of these Lemmas, we can also apply them to gain regularity of the ``doubled'' Poisson Equations \eqref{eq:doublecorrectorproblem} and \eqref{eq:tildechi}. Put explicitly:

Consider some $\tilde{f}:\R^j\times \R^j\times \mc{P}_2(\R^l)\tto \R^j$ such that there exists constants $C',\beta'>0$ independent of $x,y\in \R^j$ and $\mu\in\mc{P}_2(\R^l)$ such that
\begin{align}\label{eq:fdecayimplicationgeneral}
\tilde{f}(x,y,\mu)\cdot y\leq -\beta' |y|^2 +C',\forall x,y\in\R^j,\mu\in\mc{P}_2(\R^l),
\end{align}
$\tilde{f}$ grows at most linearly in $|y|$, $\tilde{f}$ has two uniformly bounded derivatives in $y$, and $\tilde{f}$ and both these derivatives are H\"older continuous in $y$ uniformly in $(x,\mu)$ and $\tilde{a}:\R^j\times \R^j\times \mc{P}(\R^l)\tto \R^{j\times j}$ such that
there exists $\lambda'_-,\lambda'_+>0$ such that $0<\lambda'_-\leq 2\frac{z^\top \tilde{a}(x,y,\mu)z}{|z|^2}\leq \lambda'_+<\infty$, $\forall x,y,z\in\R^j,z\neq 0,\mu\in\mc{P}_2(\R^l)$ and $\tilde{a}$ is bounded, has two uniformly bounded derivatives in $y$, and $\tilde{a}$ and both these derivatives are H\"older continuous in $y$ uniformly in $(x,\mu)$.

We can then conclude that Lemmas \ref{lemma:Ganguly1DCellProblemResult},\ref{lemma:derivativetransferformulas}.\ref{lemma:explicitrateofgrowthofderivativesinparameters1D}, and \ref{lem:regularityofaveragedcoefficients} hold replacing $\mc{L}_{x,\mu}$ by $\tilde{\mc{L}}_{x,\mu}$ which acts on $\phi \in C_b^2(\R^j)$ by
\begin{align}\label{eq:frozengeneratorgeneral}
\tilde{\mc{L}}_{x,\mu}\phi(y) = \tilde{f}(x,y,\mu)\cdot\nabla \phi(y)+\tilde{a}(x,y,\mu):\nabla^2\phi(y),
\end{align}
$\pi$ by $\tilde{\pi}$ the unique (by \cite{PV1} Proposition 1) probability measure satisfying $\tilde{\mc{L}}^*_{x,\mu}\tilde{\pi}=0$, $f$ by $\tilde{f}$, $a$ by $\tilde{a}$, $\R^d$ by $\R^j,$ $\R^{d\times d}$ by $\R^{j\times j}$, and $\mc{P}_2(\R^d)$ by $\mc{P}_2(\R^l)$ in their statements.

In particular, considering $\tilde{f}:\R^{2d}\times\R^{2d}\times \mc{P}_2(\R^d)\tto \R^{2d}$ given by $\tilde{f}(x,y,\mu)=[f(x_1,y_1,\mu),f(x_2,y_2,\mu)]^\top$ and $\tilde{a}:\R^{2d}\times\R^{2d}\times\mc{P}_2(\R^d)\tto \R^{2d\times 2d}$ given by $\tilde{a}(x,y,\mu)=\text{diag}[a(x_1,y_1,\mu),a(x_2,y_2,\mu)]$ where $x\in\R^{2d}=(x_1,x_2),x_1,x_2\in\R^d$ and $y\in\R^{2d}=(y_1,y_2),y_1,y_2\in\R^d$, we have $\tilde{\mc{L}}_{x,\mu}$ in the above discussion is equal to $\mc{L}^2_{x_1,x_2,\mu}$ from Equation \eqref{eq:2copiesgenerator}, and $\tilde{\pi}(dy;x,\mu)$ is equal to $\bar{\pi}(dy_1,dy_2;x_1,x_2,\mu)$ from Equation \eqref{eq:doublefrozeninvariantmeasure}.

Moreover, under Assumption \ref{assumption:retractiontomean}
\begin{align*}
\tilde{f}(x,y,\mu)\cdot y &=f(x_1,y_1,\mu)\cdot y_1+f(x_2,y_2,\mu)\cdot y_2 \\
&\leq -\beta [|y_1|^2+|y_2|^2] +2C\\
&=-\beta |y|^2+2C, \forall x,y\in\R^{2d},\mu\in\mc{P}_2(\R^d)
\end{align*}
and under assumption \ref{assumption:uniformellipticity}, writing $z\in \R^{2d}\setminus \br{0}$ as $(z_1,z_2)\in \R^d$ and taking any $x,y\in\R^{2d}$ and $\mu \in \mc{P}_2(\R^d)$ as before, we know
\begin{align*}
0<\lambda_-|z_i|^2\leq 2z_i^\top a(x_i,y_i,\mu)z_i\leq \lambda_+|z_i|^2,i=1,2
\end{align*}
and
\begin{align*}
z^\top \tilde{a}(x,y,\mu)z = z_1^\top a(x_1,y_1,\mu)z_1+z_2^\top a(x_2,y_2,\mu)z_2
\end{align*}
so
\begin{align*}
0<\lambda_-|z|^2=\lambda_-[|z_1|^2+|z_2|^2]\leq 2z^\top \tilde{a}(x,y,\mu)z\leq  \lambda_+[|z_1|^2+|z_2|^2]=\lambda_+|z|^2.
\end{align*}

Lastly, the desired regularity and growth properties of $\tilde{a}$ and $\tilde{f}$ are clearly inherited from those of $a$ and $f$. So indeed we can apply Lemmas \ref{lemma:Ganguly1DCellProblemResult} and \ref{lemma:explicitrateofgrowthofderivativesinparameters1D} to gain regularity of the Poisson Equations \eqref{eq:doublecorrectorproblem} and \eqref{eq:tildechi} by keeping these minor changes to the domains of the functions in the statements of the Lemmas in mind.

\end{remark}
\begin{proposition}\label{proposition:allneededregularity}
Under Assumptions \ref{assumption:uniformellipticity}-\ref{assumption:regularityofcoefficientsnew},
\begin{align*}
\Phi,\partial_{y_i}\Phi,\partial_{y_i}\partial_{y_j}\Phi&\in \mc{M}_p^{\hat{\bm{\zeta}}_1}(\R^d\times\R^d\times\mc{P}_2(\R^d);\R^d)\\
\tilde{\chi},\partial_{y_i}\tilde{\chi},\partial_{\bar{y}_i}\tilde{\chi},\partial_{y_i}\partial_{y_j}\tilde{\chi},\partial_{\bar{y}_i}\partial_{y_j}\tilde{\chi},\partial_{\bar{y}_i}\partial_{\bar{y}_j}\tilde{\chi}&\in \mc{M}_p^{\hat{\bm{\zeta}}_1}(\R^{2d}\times\R^{2d}\times\mc{P}_2(\R^d);\R^{d\times d})\\
\chi,\partial_{y_i}\chi,\partial_{\bar{y}_i}\chi,\partial_{y_i}\partial_{y_j}\chi,\partial_{\bar{y}_i}\partial_{y_j}\chi,\partial_{\bar{y}_i}\partial_{\bar{y}_j}\chi &\in \mc{M}_p^{\hat{\bm{\zeta}}_{2}}(\R^{2d}\times\R^{2d}\times\mc{P}_2(\R^d);\R^d)
\end{align*}
and
\begin{align*}
\Xi,\partial_{y_i}\Xi,\partial_{y_i}\partial_{y_j} \Xi \in \mc{M}_p^{\hat{\bm{\zeta}}}(\R^d\times\R^d\times\mc{P}_2(\R^d);\R^d)
\end{align*}
for $F=\gamma$,
\begin{align*}
\Xi,\partial_{y_i}\Xi,\partial_{y_i}\partial_{y_j} \Xi \in \mc{M}_p^{\hat{\bm{\zeta}}}(\R^d\times\R^d\times\mc{P}_2(\R^d);\R^{d\times d})
\end{align*}
for $F=D$, for all $i,j=1,...,d$, where $\Phi,\chi,\tilde{\chi},$ and $\Xi$ are the the unique classical solutions to the PDEs \eqref{eq:cellproblemold},\eqref{eq:doublecorrectorproblem},\eqref{eq:tildechi}, and \eqref{eq:driftcorrectorproblem} respectively, and $F$ is denoting the function which enters the right-hand side of Equation \eqref{eq:driftcorrectorproblem}. Moreover, $\bar{\gamma}\in \mc{M}_{b,L}^{\hat{\bm{\zeta}}}(\R^d\times\mc{P}_2(\R^d);\R^d)$ and $\bar{D}\in \mc{M}_{b,L}^{\hat{\bm{\zeta}}}(\R^d\times\mc{P}_2(\R^d);\R^{d\times d})$, where both these coefficients are defined in Equation \eqref{eq:limitingcoefficients}. Here $\hat{\bm{\zeta}},\hat{\bm{\zeta}}_1$ are as in Equation \eqref{eq:collectionsofmultiindices}, and
\begin{align*}
\hat{\bm{\zeta}}_2\ni \br{(0,j_1,0),(1,j_2,j_3),(2,j_4,(j_5,0)):j_1\in \br{0,1,...,4},j_2+j_3\leq 2,j_4+j_5\leq 1}.
\end{align*}
Additionally assuming \ref{assumption:uniformellipticityDbar}, we get further that $\bar{D}^{1/2}\in \mc{M}_{b,L}^{\hat{\bm{\zeta}}}(\R^d\times\mc{P}_2(\R^d);\R^{d\times d})$.
\begin{proof}
The fact that $\Phi,\partial_{y_i}\Phi,\partial_{y_i}\partial_{y_j}\Phi\in \mc{M}_p^{\hat{\bm{\zeta}}_1}(\R^d\times\R^d\times\mc{P}_2(\R^d))$ is immediate from Lemma \ref{lemma:explicitrateofgrowthofderivativesinparameters1D} applied to each $\Phi_l,l=1,...,d$.

To see $\tilde{\chi},\partial_{y_i}\tilde{\chi},\partial_{\bar{y}_i}\tilde{\chi},\partial_{y_i}\partial_{y_j}\tilde{\chi},\partial_{\bar{y}_i}\partial_{y_j}\tilde{\chi},\partial_{\bar{y}_i}\partial_{\bar{y}_j}\tilde{\chi}\in \mc{M}_p^{\hat{\bm{\zeta}}_1}(\R^{2d}\times\R^{2d}\times\mc{P}_2(\R^d);\R^{d\times d})$, we note that, as per Remark \ref{remark:regularityfordoubledequations}, Lemmas \ref{lemma:Ganguly1DCellProblemResult} and \ref{lemma:explicitrateofgrowthofderivativesinparameters1D} hold with $\mc{L}^2_{x,\bar{x},\mu}$ in the place of $\mc{L}_{x,\mu}$ and $\bar{\pi}$ from Equation \eqref{eq:doublefrozeninvariantmeasure} in the place of $\pi$. By product rule and triangle inequality that $b,\Phi\in \mc{M}_p^{\hat{\bm{\zeta}}_1}(\R^d\times\R^d\times\mc{P}_2(\R^d);\R^d)$ implies $G:\R^{2d}\times\R^{2d}\times\mc{P}(\R^d)\tto \R^{d\times d}$ defined by $G(\tilde{x},\tilde{y},\mu) = \Phi(\bar{x},\bar{y},\mu)\otimes b(x,y,\mu)$ where $x,\bar{x},y,\bar{y}\in\R^d$ are such that $\tilde{x}=(x,\bar{x}),\tilde{y}=(y,\bar{y})$ satisfies $G\in \mc{M}_p^{\hat{\bm{\zeta}}_1}(\R^{2d}\times\R^{2d}\times\mc{P}_2(\R^d);\R^{d\times d})$, and under Assumption \ref{assumption:regularityofcoefficientsnew}, $\tilde{f}\in \mc{M}^{\hat{\bm{\zeta}}_1}_p(\R^{2d}\times\R^{2d}\times \mc{P}_2(\R^d);\R^{2d}),$ and $\tilde{a}\in \mc{M}^{\hat{\bm{\zeta}}_1}_p(\R^{2d}\times\R^{2d}\times \mc{P}_2(\R^d);\R^{2d\times 2d})$ (using the notation of Remark \ref{remark:regularityfordoubledequations}). As noted after \eqref{eq:doublecorrectorproblem}, we also have $G$ integrates against $\bar{\pi}$ to $0$, so we get the result holds via Lemma \ref{lemma:explicitrateofgrowthofderivativesinparameters1D} applied to each $\tilde{\chi}_{k,l}$ for $k,l=1,...,d$.

The fact that $\chi,\partial_{y_i}\chi,\partial_{\bar{y}_i}\chi,\partial_{y_i}\partial_{y_j}\chi,\partial_{\bar{y}_i}\partial_{y_j}\chi,\partial_{\bar{y}_i}\partial_{\bar{y}_j}\chi \in \mc{M}_p^{\hat{\bm{\zeta}}_{2}}(\R^{2d}\times\R^{2d}\times\mc{P}_2(\R^d);\R^d)$ holds similarly. Here we see that, again by product rule and triangle inequality, $b,\Phi\in \mc{M}_p^{\hat{\bm{\zeta}}_1}(\R^d\times\R^d\times\mc{P}_2(\R^d);\R^d)$ implies $G:\R^{2d}\times\R^{2d}\times\mc{P}(\R^d)\tto \R^{d}$ defined by $G(\tilde{x},\tilde{y},\mu) = \partial_\mu\Phi(\bar{x},\bar{y},\mu)[x]b(x,y,\mu)$ where $x,\bar{x},y,\bar{y}\in\R^d$ are such that $\tilde{x}=(x,\bar{x}),\tilde{y}=(y,\bar{y})$ satisfies $G\in \mc{M}_p^{\hat{\bm{\zeta}}_2}(\R^{2d}\times\R^{2d}\times\mc{P}_2(\R^d);\R^d)$, where here one must track how many derivatives in $x,\mu,$ and $z$ of $\partial_\mu\Phi(x,y,\mu)[z]$ one gets by the fact that $\Phi\in \mc{M}_p^{\hat{\bm{\zeta}}_1}(\R^d\times\R^d\times\mc{P}_2(\R^d);\R^d)$. This is what results in the smaller collection of multi-indices $\hat{\bm{\zeta}}_2$. The result then follows again from Lemma \ref{lemma:explicitrateofgrowthofderivativesinparameters1D} applied to the coordinate functions of $\chi$.

For $\Xi,\partial_{y_i}\Xi,\partial_{y_i}\partial_{y_j} \Xi \in \mc{M}_p^{\hat{\bm{\zeta}}}(\R^d\times\R^d\times\mc{P}_2(\R^d);\R^d)$ for $F=\gamma$ and $\Xi,\partial_{y_i}\Xi,\partial_{y_i}\partial_{y_j} \Xi \in \mc{M}_p^{\hat{\bm{\zeta}}}(\R^d\times\R^d\times\mc{P}_2(\R^d);\R^{d\times d})$ for $F=D$, we first establish that $\gamma\in\mc{M}_p^{\hat{\bm{\zeta}}}(\R^d\times\R^d\times\mc{P}_2(\R^d);\R^d)$ and $D\in\mc{M}_p^{\hat{\bm{\zeta}}}(\R^d\times\R^d\times\mc{P}_2(\R^d);\R^{d\times d})$. By assumption all the coefficients which appear in the definition of $\gamma$ and $D$ in Equation \eqref{eq:limitingcoefficients} are already assumed to be in $\mc{M}_p^{\hat{\bm{\zeta}}}(\R^d\times\R^d\times\mc{P}_2(\R^d);\R^k)$ for the appropriate choices of $k$. Thus, to conclude that $\gamma$ and $D$ are in $\mc{M}_p^{\hat{\bm{\zeta}}}(\R^d\times\R^d\times\mc{P}_2(\R^d))$, we need only show that $\Phi,\partial_{y_i}\Phi,\partial_{x_i}\Phi,$ and $\partial_{x_i}\partial_{y_j}\Phi$ are in $\mc{M}_p^{\hat{\bm{\zeta}}}(\R^d\times\R^d\times\mc{P}_2(\R^d);\R^d)$ for $i,j=1,...,d$. Since $\hat{\bm{\zeta}}\subset \hat{\bm{\zeta}}_1$, we already know this for $\Phi$ and $\partial_{y_i}\Phi$. For $\partial_{x_i} \Phi$ and $\partial_{x_i}\partial_{y_j}\Phi$, we note that $\hat{\bm{\zeta}}_1$ is constructed so that $G\in \mc{M}_p^{\hat{\bm{\zeta}}_1}(\R^d\times\R^d\times\mc{P}_2(\R^d);\R^d)$ implies $\partial_{x_i} G\in \mc{M}_p^{\hat{\bm{\zeta}}}(\R^d\times\R^d\times\mc{P}_2(\R^d);\R^d),i=1,...,d$. So indeed $\gamma\in \mc{M}_p^{\hat{\bm{\zeta}}}(\R^d\times\R^d\times\mc{P}_2(\R^d);\R^d)$ and $D\in \mc{M}_p^{\hat{\bm{\zeta}}}(\R^d\times\R^d\times\mc{P}_2(\R^d);\R^{d\times d})$.

Now, we note that applying Lemma \ref{lem:regularityofaveragedcoefficients} to each coordinate function of $\gamma$ and $D$, this implies that $\bar{\gamma}\in \mc{M}_{b,L}^{\hat{\bm{\zeta}}}(\R^d\times\mc{P}_2(\R^d);\R^d)$ and $\bar{D}\in \mc{M}_{b,L}^{\hat{\bm{\zeta}}}(\R^d\times\mc{P}_2(\R^d);\R^{d\times d})$. By triangle inequality, this implies $\gamma-\bar{\gamma}\in \mc{M}_p^{\hat{\bm{\zeta}}}(\R^d\times\R^d\times\mc{P}_2(\R^d);\R^d)$ and $D-\bar{D}\in \mc{M}_p^{\hat{\bm{\zeta}}}(\R^d\times\R^d\times\mc{P}_2(\R^d);\R^{d\times d})$ so that Lemma \ref{lemma:explicitrateofgrowthofderivativesinparameters1D} yields $\Xi,\partial_{y_i}\Xi,\partial_{y_i}\partial_{y_j} \Xi \in \mc{M}_p^{\hat{\bm{\zeta}}}(\R^d\times\R^d\times\mc{P}_2(\R^d);\R^k)$ with $k=d$ for $F=\gamma$ and $k=d\times d$ for $F=D$.

Lastly, to gain $\bar{D}^{1/2}\in \mc{M}_{b,L}^{\hat{\bm{\zeta}}}(\R^d\times\mc{P}_2(\R^d);\R^{d\times d})$ from $\bar{D}\in \mc{M}_{b,L}^{\hat{\bm{\zeta}}}(\R^d\times\mc{P}_2(\R^d);\R^{d\times d})$ under Assumption \ref{assumption:uniformellipticityDbar}, we use that mapping which takes a positive-definite matrix to its unique positive-definite square root is Fr\'echet differentiable up to arbitrary order, with all derivatives being bounded on sets of uniformly bounded, uniformly positive definite matrices (see Equation (6) in \cite{MatrixRoot}). Thus, by chain rule and the fact that $\bar{D}\in \mc{M}_{b,L}^{\hat{\bm{\zeta}}}(\R^d\times\mc{P}_2(\R^d);\R^{d\times d})$, we can see indeed that $\bar{D}^{1/2}\in \mc{M}_{b,L}^{\hat{\bm{\zeta}}}(\R^d\times\mc{P}_2(\R^d);\R^{d\times d})$. See also \cite{CLX} Lemma A.7 for how the growth of  derivatives of $\bar{D}^{1/2}$ can be controlled in terms of $\bar{\lambda}_-$ and the derivatives of $\bar{D}$.
\end{proof}
\end{proposition}

\section{On Differentiation of Functions on Spaces of Measures}\label{Appendix:LionsDifferentiation}
We will need the following two definitions from \cite{CD}:

\begin{defi}
\label{def:lionderivative}
Given a function $u:\mc{P}_2(\R^d)\tto \R$, we may define a lifting of $u$ to $\tilde{u}:L^2(\tilde\W,\tilde\F,\tilde\Prob;\R^d)\tto \R$ via $\tilde u (X) = u(\mc{L}(X))$ for $X\in L^2(\tilde\W,\tilde\F,\tilde\Prob;\R^d)$. Here we assume $\tilde\W$ is a Polish space, $\tilde\F$ its Borel $\sigma$-field, and $\tilde\Prob$ is an atomless probability measure (since $\tilde\W$ is Polish, this is equivalent to every singleton having zero measure).

Here, denoting by $\mu(|\cdot|^r)\coloneqq \int_{\R^d}|x|^r \mu(dx)$ for $r>0$,
\begin{align*}
\mc{P}_2(\R^d) \coloneqq \br{ \mu\in \mc{P}(\R^d):\mu(|\cdot|^2)= \int_{\R^d}|x|^2 \mu(dx)<\infty}.
\end{align*}
$\mc{P}_2(\R^d)$ is a Polish space under the $L^2$-Wasserstein distance
\begin{align*}
\bb{W}_2 (\mu_1,\mu_2)\coloneqq \inf_{\pi \in\mc{C}_{\mu_1,\mu_2}} \biggl[\int_{\R^d\times\R^d} |x-y|^2 \pi(dx,dy)\biggr]^{1/2},
\end{align*}
where $\mc{C}_{\mu_1,\mu_2}$ denotes the set of all couplings of $\mu_1,\mu_2$.

We say $u$ is \textbf{L-differentiable} or \textbf{Lions-differentiable} at $\mu_0\in\mc{P}_2(\R^d)$ if there exists a random variable $X_0$ on some $(\tilde\W,\tilde\F,\tilde\Prob)$ satisfying the above assumptions such that $\mc{L}(X_0)=\mu_0$ and $\tilde u$ is Fr\'echet differentiable at $X_0$.

The Fr\'echet derivative of $\tilde u$ can be viewed as an element of $L^2(\tilde\W,\tilde\F,\tilde\Prob;\R^d)$ by identifying $L^2(\tilde\W,\tilde\F,\tilde\Prob;\R^d)$ and its dual. From this, one can find that if $u$ is L-differentiable at $\mu_0\in\mc{P}_2(\R^d)$, there is a deterministic measurable function $\xi: \R^d\tto \R^d$ such that $D\tilde{u}(X_0)=\xi(X_0)$, and that $\xi$ is uniquely defined $\mu_0$-almost everywhere on $\R^d$. We denote this equivalence class of $\xi\in L^2(\R^d,\mu_0;\R^d)$ by $\partial_\mu u(\mu_0)$ and call $\partial_\mu u(\mu_0)[\cdot]:\R^d\tto \R^d$ the \textbf{Lions derivative} of $u$ at $\mu_0$. Note that this definition is independent of the choice of $X_0$ and $(\tilde\W,\tilde\F,\tilde\Prob)$. See \cite{CD} Section 5.2.

To avoid confusion when $u$ depends on more variables than just $\mu$, if $\partial_\mu u(\mu_0)$ is differentiable at $z_0\in\R^d$, we denote its derivative at $v_0$ by $\partial_z\partial_\mu u(\mu_0)[z_0]$.
\end{defi}
\begin{defi}
\label{def:fullyC2}
(\cite{CD} Definition 5.83) We say $u:\mc{P}_2(\R^d)\tto \R^d$ is \textbf{Fully} $\mathbf{C^2}$ if the following conditions are satisfied:
\begin{enumerate}
\item $u$ is $C^1$ in the sense of L-differentiation, and its first derivative has a jointly continuous version $\mc{P}_2(\R^d)\times \R^d)\ni (\mu,z)\mapsto \partial_\mu u(\mu)[z]\in\R^d$.
\item For each fixed $\mu\in\mc{P}_2(\R^d)$, the version of $\R^d\ni z\mapsto \partial_\mu u(\mu)[z]\in\R^d$ from the first condition is differentiable on $\R^d$ in the classical sense and its derivative is given by a jointly continuous function $\mc{P}_2(\R^d)\times \R^d)\ni (\mu,z)\mapsto \partial_z\partial_\mu u(\mu)[z]\in\R^{d\times d}$.
\item For each fixed $z\in \R^d$, the version of $\mc{P}_2(\R^d)\ni \mu\mapsto \partial_\mu u(\mu)[z]\in \R^d$ in the first condition is continuously L-differentiable component-by-component, with a derivative given by a function $\mc{P}_2(\R^d)\times \R^d\times \R^d\ni(\mu,z,\bar{z})\mapsto \partial^2_\mu u(\mu)[z][\bar{z}]\in\R^{d\times d}$ such that for any $\mu\in\mc{P}_2(\R^d)$ and $X\in L^2(\tilde\W,\tilde\F,\tilde\Prob;\R^d)$ with $\mc{L}(X)=\mu$, $\partial^2_\mu u(\mu)[z][X]
$ gives the Fr\'echet derivative at $X$ of $L^2(\tilde\W,\tilde\F,\tilde\Prob;\R^d)\ni X'\mapsto \partial_\mu u(\mc{L}(X'))[z]$ for every $z\in\R^d$. Denoting $\partial^2_\mu u(\mu)[z][\bar{z}]$ by $\partial^2_\mu u(\mu)[z,\bar{z}]$, the map $\mc{P}_2(\R^d)\times \R^d\times \R^d\ni(\mu,z,\bar{z})\mapsto \partial^2_\mu u(\mu)[z,\bar{z}]$ is also assumed to be continuous in the product topology.
\end{enumerate}
\end{defi}
\begin{remark}\label{remark:sufficientconditionsforItos}
Conditions 1) and 2) from Definition \ref{def:fullyC2} along with local boundedness of $\partial_\mu u$ and $\partial_z\partial_\mu u$ is sufficient to apply It\^o's formula for measure-dependent functions as used in the proofs Section \ref{section:mckeanvlasovergodictheorems} and the proof of Theorem \ref{theo:mckeanvlasovaveraging} - see Section 5.6.4 in \cite{CD}.
\end{remark}
\begin{remark}\label{remark:thirdLionsDerivative}
In this paper we will in fact also look at functions $u:\mc{P}_2(\R^d)\tto \R^d$ which are required to have $3$ Lions Derivatives. We will assume such functions are \textbf{Fully} $\mathbf{C^2}$, and satisfy:
\begin{enumerate}\setcounter{enumi}{3}
\item For each each fixed $\mu\in\mc{P}_2(\R^d)$ the version of  $\R^d\times \R^d \ni (z_1,z_2)\mapsto \partial^2_\mu u(\mu)[z_1,z_2]\in\R^{d\times d}$ in the third condition is differentiable on $\R^{2d}$ in the classical sense and its derivative is given by a jointly continuous function
    $\partial_z\partial^2_\mu u(\mu)[z_1,z_2] = (\partial_{z_1}\partial^2_\mu u(\mu)[z_1,z_2],\partial_{z_2}\partial^2_\mu u(\mu)[z_1,z_2])\in\R^{d\times d\times d}\times\R^{d\times d\times d}$ for $\mc{P}_2(\R^d)\times \R^d\times\R^d\ni (\mu,z_1,z_2).$
\item For each fixed $(z_1,z_2)\in \R^{2d}$, the version of $\mc{P}_2(\R^d)\ni \mu\mapsto \partial^2_\mu u(\mu)[z_1,z_2]\in \R^{d\times d}$ in the third condition is continuously L-differentiable component-by-component, with a derivative given by a function $\mc{P}_2(\R^d)\times \R^d\times \R^d \times \R^d\ni(\mu,z_1,z_2,z_3)\mapsto \partial^3_\mu u(\mu)[z_1,z_2][z_3]\in\R^{d\times d\times d}$ such that for any $\mu\in\mc{P}_2(\R^d)$ and $X\in L^2(\tilde\W,\tilde\F,\tilde\Prob;\R^d)$ with $\mc{L}(X)=\mu$, $\partial^3_\mu u(\mu)[z_1,z_2][X]
$ gives the Fr\'echet derivative at $X$ of $L^2(\tilde\W,\tilde\F,\tilde\Prob;\R^d)\ni X'\mapsto \partial^2_\mu u(\mc{L}(X'))[z_1,z_2]$ for every $(z_1,z_2)\in\R^{2d}$. Denoting $\partial^3_\mu u(\mu)[z_1,z_2][z_3]$ by $\partial^2_\mu u(\mu)[z_1,z_2,z_3]$, the map $\mc{P}_2(\R^d)\times \R^d\times \R^d\times \R^d\ni(\mu,z_1,z_2,z_3)\mapsto \partial^3_\mu u(\mu)[z_1,z_2,z_3]$ is also assumed to be continuous in the product topology.
\end{enumerate}
Though we don't require higher than 3 Lions derivatives in this paper, when we state general results for higher Lions derivatives in terms of the spaces from Definition \ref{def:lionsderivativeclasses}, we assume the analogous higher continuity.
\end{remark}

\begingroup
\begin{bibdiv}
\begin{biblist}

\bib{BCCP}{article}{

        title={A Non-Maxwellian Steady Distribution for One-Dimensional Granular Media},

        author={D. Benedetto},
        author={E. Caglioti},
        author={J. A. Carrillo},
        author={M. Pulvirenti},

        journal={Journal of Statistical Physics},

        volume={91},

        date={1998},

        pages={979--990}
}

\bib{Bensoussan}{book}{

        title = {Asymptotic Analysis for Periodic Structures},

        author = {A. Bensoussan},

      author = {J. L. Lions},

      author = {G. Papanicolau},

        date = {1978},

        publisher = { North Holland},

        address = {Amsterdam}

}

\bib{BS}{arxiv}{

        title={Large deviations for interacting multiscale particle systems},

        author={Z. Bezemek},

        author={K. Spiliopoulos},

        date={2020},
      arxiveprint={
            arxivid={2011.03032},
            arxivclass={math.PR},
      }
}

\bib{BS_MDP2022}{arxiv}{

        title={Moderate deviations for fully coupled multiscale weakly interacting particle systems},

        author={Z. Bezemek},

        author={K. Spiliopoulos},

        date={2022},
   arxiveprint={
            arxivid={2202.08403},
            arxivclass={math.PR},
                  }
}
\bib{BorkarGaitsgory}{article}{

        title={Averaging of singularly perturbed controlled stochastic differential equations},

        author={V. Borkar},
        author={V. Gaitsgory},

        journal={Applied Mathematics and Optimization},

        volume={56},

        number={2},

        date={2007},

        pages={169--209}
}

\bib{BLPR}{article}{

        title={Mean-field stochastic differential equations and associated PDEs},

        author={R. Buckdahn},
        author={J. Li},
        author={S. Peng},
        author={C. Rainer},

        journal={Ann. Probab.},

        volume={45},

        number={2},

        date={2017},

        pages={824--878}
}
\bib{NotesMFG}{report}{

        title = { Notes on mean field games (from P. L. Lions’ lectures at Collège de France)},

        author = {P. Cardaliaguet},

        date = {2013},

        status= {unpublished},

        eprint = {https://www.ceremade.dauphine.fr/~cardaliaguet/MFG20130420.pdf},
}

\bib{CD}{book}{

        title = {Probabilistic Theory of Mean Field Games with Applications I},

        author = {R. Carmona},

      author = {F. Delarue},

        date = {2018},

        publisher = { Springer},

        address = {NY}

}

\bib{CerraiBook}{book}{

        title = {Second Order PDE’s in Finite and Infinite Dimension: A Probabilistic Approach},

        author = {S. Cerrai},

        date = {2001},

        publisher = { Springer},

        address = {NY}

}

\bib{CCD}{arxiv}{

        title={A Probabilistic approach to classical solutions of the master equation for large population equilibria},

        author={J.F. Chassagneux},

        author={D. Crisan},

        author={F. Delarue},

        date={2015},
      arxiveprint={
            arxivid={1411.3009},
            arxivclass={math.PR},
      }
}

\bib{CST}{article}{

        title={Weak quantitative propagation of chaos via differential calculus on the space of measures},

        author={J.F. Chassagneux},

        author={L. Szpruch},

        author={A. Tse},

        date={2022},
        journal={Ann. Appl. Probab.},
        volume={32},
        number={3},
        pages={1929-1969}
}

\bib{DF2}{article}{

        title={From the backward Kolmogorov PDE on the Wasserstein space to propagation of chaos for McKean-Vlasov SDEs},

        author={P. E. Chaudru de Raynal},

        author={N. Frikha},

        date={2021},
        journal={Journal de Mathématiques Pures et Appliqués},
        volume={156},
        number={2}

}
\bib{DF1}{arxiv}{

        title={Well-posedness for some non-linear diffusion processes and related PDE on the Wasserstein space},

        author={P. E. Chaudru de Raynal},

        author={N. Frikha},

        date={2018},
      arxiveprint={
            arxivid={1811.06904},
            arxivclass={math.CA},
      }
}

\bib{CLX}{article}{

        title={Approximation to stochastic variance reduced gradient Langevin dynamics by stochastic delay differential equations},

        author={P. Chen},

        author={J. Lu},
        author={L. Xu},
        date={2022},
        journal={Applied Mathematics \& Optimization},
        volume={85},
        number={15}
}

\bib{CM}{article}{

        title={Smoothing properties of McKean-Vlasov SDEs},

        author={D. Crisan},

        author={E. McMurray},

        journal={Probability Theory and Related Fields},

        volume={171},
        number={2},
        date={2018},

        pages={97–-148}

}

\bib{DKZ}{article}{

        title={Regularity of solutions of linear stochastic equations in hilbert spaces},

        author={G. Da prato},

        author={S. Kwapie\v{n}},

        author={J. Zabczyk},

        journal={Stochastics},

        volume={23},

        date={1988},

        pages={1--23}

}

\bib{Dawson}{article}{

        title={Critical dynamics and fluctuations for a mean-field model of cooperative behavior},

        author={D. A. Dawson},

        journal={J. Stat. Phys.},

        volume={31},

        date={1983},

        pages={29--85}

}
\bib{MatrixRoot}{article}{

        title={A Taylor expansion of the square root matrix function},

        author={P. Del Moral},

        author={A. Niclas},

        journal={Journal of Mathematical Analysis and Applications},

        volume={465},

        number={1},

        date={2018},

        pages={259--266}

}
\bib{DLR}{article}{

        title={From the master equation to mean field game limit theory: a central limit theorem},

        author={F. Delarue},

        author={D. Lacker},

        author={K. Ramanan},

        journal={Electron. J. Probab.},

        volume={24},

        date={2019},

        pages={1--54}

}

\bib{delgadino2020}{article}{

        title={On the diffusive-mean field limit for weakly interacting diffusions exhibiting phase transitions},

        author={M. G. Delgadino},
        author={R. S. Gvalani},
        author={G. A. Pavliotis},

        date={2021},
        journal={Archive for Rational Mechanics and Analysis},
        volume={241},
        pages={91--148}
}

\bib{DM}{book}{

        title = {Methods of Nonlinear Analysis: Applications to Differential Equations},

        author = {P. Dr\'abek},
        author = {J. Milota},

        date = {2015},

        publisher = {Birkh\"auser},

        address = {Basel},
        edition = {2},

}
\bib{DP}{arxiv}{

        title={Brownian Motion in an N-scale periodic Potential},

        author={A. B. Duncan},

        author={G. A. Pavliotis},

        date={2016},
      arxiveprint={
            arxivid={1605.05854},
            arxivclass={math.ph},
      }
}

\bib{EidelmanBook}{book}{

        title = {Parabolic Systems},

        author = {S. D. Eidel'man},
        publisher = {Nauka},
        address={Moscow},
        date={1965},
        language={Russian},

        translation={
        title = {Parabolic Systems},
        date = {1969},
        publisher = {North-Holland Publishing},
        address = {Amsterdam},
        language={English}
            }
}

\bib{FM}{article}{

        title={A Hilbertian approach for fluctuations on the McKean-Vlasov model},
        author={B. Fernandez},
        author={S. M\'el\'eard},

        journal={Stochastic Processes and their Applications},
        volume={71},
        date={1997},
        pages={33--53}
}
\bib{Friedman}{book}{

        title = {Partial Differential Equations of Parabolic Type},

        author = {A. Friedman},

        date = {1983},

        publisher = { R. E. Krieger Publishing},

        address = {Malabar, Florida}

}

\bib{GS}{article}{

        title={Inhomogeneous functionals and approximations of invariant distributions of ergodic diffusions: Central limit theorem and moderate deviation asymptotics},

        author={A. Ganguly},
        author={P. Sundar},

        journal={Stochastic Processes and their Applications},

        volume={133},

        date={2021},

        pages={74--110}

}

\bib{Garnier1}{article}{

        title={Large deviations for a mean field model of systemic risk},

        author={J. Garnier},
        author={G. Papanicolaou},
        author={T. W. Yang},

        journal={SIAM Journal of financial mathematics},

        volume={4},
        number={1},

        date={2013},

        pages={151--184}

}
\bib{Garnier2}{article}{

        title={Consensus convergence with stochastic effects},

        author={J. Garnier},
        author={G. Papanicolaou},
        author={T. W. Yang},

        journal={Vietnam Journal of mathematics},

        volume={45},
        number={1-2},

        date={2017},

        pages={51--75}

}
\bib{HSS}{article}{

        title={McKean–Vlasov SDEs under measure dependent Lyapunov conditions},

        author={W. R.P. Hammersley},
        author={D. \v{S}i\v{s}ka},
        author={\L. Szpruch},

        journal={Ann. Inst. H. Poincaré Probab. Statist.},

        volume={57},
        number={2},

        date={2021},

        pages={1032--1057}

}

\bib{HM}{article}{

        title={Tightness problem and stochastic evolution equation arising from fluctuation phenomena for interacting diffusions},

        author={M. Hitsuda},
        author={I. Mitoma},

        journal={Journal of Multivariate Analysis},

        volume={19},
        number={2},

        date={1986},

        pages={311--328}

}

\bib{HLL}{arxiv}{

        title={Strong Convergence Rates in Averaging Principle for Slow-Fast McKean-Vlasov SPDEs},

        author={W. Hong},

        author={S. Li},
        author={W. Liu},

        date={2021},
      arxiveprint={
            arxivid={2107.14401},
            arxivclass={math.PR},
      }
}
\bib{HLS}{arxiv}{

        title={Diffusion Approximation for Multi-Scale McKean-Vlasov SDEs Through Different Methods},

        author={W. Hong},

        author={S. Li},
        author={X. Sun},
        date={2022},
      arxiveprint={
            arxivid={2206.01928},
            arxivclass={math.PR},
      }
}
\bib{HLLS}{arxiv}{

        title={Central Limit Type Theorem and Large Deviations for Multi-Scale McKean-Vlasov SDEs},

        author={W. Hong},

        author={S. Li},
        author={W. Liu},
        author={X. Sun},

        date={2021},
      arxiveprint={
            arxivid={2112.08203},
            arxivclass={math.PR},
      }
}

\bib{HW}{arxiv}{

        title={Derivative Estimates on Distributions of McKean-Vlasov SDEs},

        author={X. Huang},

        author={F.Y. Wang},

        date={2020},
      arxiveprint={
            arxivid={2006.16731},
            arxivclass={math.PR},
      }
}

\bib{HWSingular}{article}{

        title={Distribution dependent SDEs with singular coefficients},

        author={X. Huang},

        author={F.Y. Wang},

        journal={Stochastic Processes and their Applications},

        volume={129},
        number={11},

        date={2019}
}
\bib{HWY}{arxiv}{

        title={Weak Solution and Invariant Probability Measure for McKean-Vlasov SDEs with Integrable Drifts},

        author={X. Huang},

        author={S. Wang},
        author={F.F. Yang},
        date={2021},
      arxiveprint={
            arxivid={2108.05802},
            arxivclass={math.PR},
      }
}

\bib{KS}{book}{

        title = {Brownian Motion and Stochastic Calculus},

        author = {I. Karatzas},

      author = {S. Shreve},

        date = {1998},

        publisher = { Springer},

        address = {NY}

}
\bib{KCBFL}{article}{

        title={Emergent Behaviour in Multi-particle Systems with Non-local Interactions},

        author={T. Kolokolnikov},

        author={A. Bertozzi},

        author={R. Fetecau},

        author={M. Lewis},

        journal={Physica D},
        volume={260},
        date={2013},
        pages={1-4}
}
\bib{KSS}{arxiv}{

        title={Well-posedness and averaging principle of McKean-Vlasov SPDEs driven by cylindrical $\alpha$-stable process},

        author={M. Kong},

        author={Y. Shi},
        author={X. Sun},

        date={2021},
      arxiveprint={
            arxivid={2106.05561},
            arxivclass={math.PR},
      }
}

\bib{KX}{article}{

        title={A stochastic evolution equation arising from the fluctuations of a class of interacting particle systems},

        author={T. G. Kurtz},

        author={J. Xiong},

        journal={Communications in Mathematical Sciences},
        volume={2},
        number={3},
        date={2004},
        pages={325--358}
}
\bib{Lacker}{article}{

        title={Limit theory for controlled McKean-Vlasov dynamics},

        author={D. Lacker},

        journal={SIAM Journal on Control and Optimization},
        volume={55},
        number={3},
        date={2017},
        pages={1641--1672}
}

\bib{LSU}{book}{

        title = {Linear and Quasi-linear Equations of Parabolic Type},

        author = {O. A.  Lady\v{z}enskaja},

        author = {V. A.  Solonnikov},

        author = {N. N.  Ural’ceva},

        date = {1968},

        publisher = {AMS},

        address = {NY}

}
\bib{LWX}{arxiv}{

        title={Poisson equation on Wasserstein space and diffusion approximations for McKean-Vlasov equation},

        author={Y. Li},
        author={F. Wu},
        author={L. Xie},
        date={2022},
      arxiveprint={
            arxivid={2203.12796},
            arxivclass={math.PR},
      }
}

\bib{Mehri}{article}{

        title={Weak solutions to Vlasov–McKean equations under Lyapunov-type conditions},

        author={S. Mehri},
        author={W. Stannat},

        journal={Stochastics and Dynamics},

        volume={19},
        number={9},

        date={2019}
}

\bib{MT}{book}{

        title = {Collective dynamics from bacteria to crowds: An excursion through modeling, analysis and simulation},
        volume={533},
        series={CISM International Centre for Mechanical Sciences. Courses and Lectures},
        editor = {A. Muntean},
        editor = {F. Toschi},

        date = {2014},

        publisher = {Springer},

        address = {Vienna}

}
\bib{MS}{article}{

        title={Moderate deviations principle for systems of slow-fast diffusions},

        author={M. R. Morse},

        author={K. Spiliopoulos},

        journal={Asymptotic Analysis},
        volume={105},
        number={3--4},
        date={2017},
        pages={97--135}
}

\bib{PV1}{article}{

        title={On Poisson equation and diffusion approximation I},

        author={E. Pardoux},
        author={A. Y. Veretennikov},
        journal={Annals of Probability},

        volume={29},
        number={3},

        date={2001},

        pages={1061--1085}

}

\bib{PV2}{article}{

        title={On Poisson equation and diffusion approximation II},

        author={E. Pardoux},
        author={A. Y. Veretennikov},
        journal={Annals of Probability},

        volume={31},
        number={3},

        date={2003},

        pages={1166--1192}

}

\bib{PavliotisSPA}{book}{

        title = {Stochastic Processes and Applications: Diffusion Processes, the Fokker-Planck and Langevin Equations},

        author = {G. Pavliotis},

        date = {2014},

        publisher = { Springer},

        address = {NY}

}

\bib{PS}{book}{

        title = {Multiscale Methods},

        author = {G. Pavliotis},

      author = {G. A. Stuart},

        date = {2008},

        publisher = { Springer},

        address = {NY}

}

\bib{QW}{arxiv}{

        title={Efficient filtering for multiscale McKean-Vlasov Stochastic differential equations},

        author={H. Qiao},
        author={W. Wei},
        date={2022},
      arxiveprint={
            arxivid={2206.05037},
            arxivclass={math.PR},
      }
}
\bib{Ren}{arxiv}{

        title={Singular McKean-Vlasov SDEs: Well-Posedness, Regularities and Wangs Harnack Inequality},

        author={P. Ren},

        date={2021},
      arxiveprint={
            arxivid={2110.08846},
            arxivclass={math.PR},
      }
}
\bib{RocknerFullyCoupled}{article}{

        title={Diffusion approximation for fully coupled stochastic differential equations},

        author={M. R\"ockner},

        author={L. Xie},

        date={2021},
        volume = {49},
        number = {3},

        journal = {The Annals of Probability},
        pages= {101--122},
}
\bib{RocknerMcKeanVlasov}{article}{

        title={Strong convergence order for slow-fast McKean-Vlasov stochastic differential equations},

        author={M. R\"ockner},

        author={X. Sun},
        author={Y. Xie},

        date={2021},
        journal={Ann. Inst. H. Poincaré Probab. Statist.},
        volume={57},
        number={1},
        pages={547--576}

}

\bib{RocknerHolderContinuous}{arxiv}{

        title={Strong and weak convergence in the averaging principle for SDEs with Hölder coefficients},

        author={M. R\"ockner},

        author={X. Sun},
        author={Y. Xie},

        date={2019},
      arxiveprint={
            arxivid={1907.09256},
            arxivclass={math.PR},
      }
}

\bib{RocknerSPDE}{arxiv}{

        title={Asymptotic behavior of multiscale stochastic partial differential equations},

        author={M. R\"ockner},

        author={L. Xie},
        author={L. Yang},

        date={2020},
      arxiveprint={
            arxivid={2010.14897},
            arxivclass={math.PR},
      }
}
\bib{Spiliopoulos2014Fluctuations}{article}{

        title={Fluctuation analysis and short time asymptotics for multiple scales diffusion processes},

        author={K. Spiliopoulos},
        journal={Stochastics and Dynamics},

        volume={14},
        number={3},

        date={2014},

        pages={1350026}

}
\bib{LossFromDefault}{article}{

        title={Fluctuation Analysis for the Loss From Default},

        author={K. Spiliopoulos},
        author={J. A. Sirignano},
        author={K. Giesecke},
        journal={Stochastic Processes and their Applications},

        volume={124},
        number={7},

        date={2014},

        pages={2322--2362}

}

\bib{Tse}{article}{

        title={Higher order regularity of nonlinear Fokker-Planck PDEs with respect to the measure component},

        author={A. Tse},
        journal={Journal de Math\'ematiques Pures et Appliqu\'ees},

        volume={150},

        date={2021},

        pages={134--180}

}

\bib{Veretennikov1987}{article}{

        title={Bounds for the mixing rates in the theory of stochastic equations},

        author={A. Yu. Veretennikov},
        journal={Theory Probab. Appl.},

        volume={32},

        date={1987},

        pages={273--281}

}

\bib{Wang}{article}{

        title={Distribution dependent SDEs for Landau type equations},

        author={F. Y. Wang},
        journal={Stochastic Processes and their Applications},

        volume={128},
        number={2},

        date={2017},

        pages={595-–621}

}

\bib{XLLM}{article}{

        title={Strong Averaging Principle for Two-Time-Scale Stochastic McKean-Vlasov Equations},

        author={J. Xu},

        author={J. Liu},
        author={J. Liu},
        author={Y. Miao},
        journal={Applied Mathematics and Optimization},

        volume={84},
        date={2021},

        pages={837-–867}

}
\bib{Zwanzig}{article}{

        title={Diffusion in a rough potential},

        author={R. Zwanzig},

        journal={Proc. Natl. Acad. Sci.},

        volume={85},

        date={1988},

        pages={2029--2030},
        address = {USA}
}

\end{biblist}
\end{bibdiv}
\endgroup

\end{document}